\newcommand{\beenum}{\begin{enumerate}[label={\roman*)},leftmargin=0em, itemindent=0.5cm, topsep=0cm, itemsep=0cm]}
\newcommand{\delny}{\partial_\nu}
\newcommand{\amrand}{\big\rvert_{\partial\Omega}}
\newcommand{\Lom}[1]{L^{#1}(\Omega)}
\newcommand{\Lmme}{L^{\max\{1,m-1\}}(\Om)}
\newcommand{\Wom}[2]{W^{#1,#2}(\Omega)}
\newcommand{\Li}{L^{∞}}
\newcommand{\Liom}{L^{∞}(Ω)}
\newcommand{\Om}{Ω}
\newcommand{\Ombar}{\overline{\Omega}}
\newcommand{\dOm}{\partial \Omega}
\newcommand{\LT}[2]{L^{#1}((0,T);#2)}
\newcommand{\norm}[2][]{\left\lVert#2\right\rVert_{#1}}
\newcommand{\normm}[2]{\left\lVert#2\right\rVert_{#1}}
\newcommand{\kl}[1]{\left(#1\right)}
\newcommand{\qtilde}{\widetilde{q}}
\newcommand{\utilde}{\widetilde{u}}
\newcommand{\vtilde}{\widetilde{v}}
\newcommand{\wtilde}{\widetilde{w}}
\newcommand{\Ttilde}{\widetilde{T}}
\newcommand{\set}[1]{\left\{#1\right\}}
\newcommand{\ds}{\mathrm{d}s}
\newcommand{\dsigma}{\mathrm{d}\sigma}
\newcommand{\restr}[1]{\big\vert_{#1}}
\newcommand{\intninf}{\int_0^{∞}}
\newcommand{\intnt}{\int_0^t}
\newcommand{\intnT}{\int_0^T}
\newcommand{\uhat}{\widehat{u}}
\newcommand{\ubar}{\overline{u}}
\newcommand{\vbar}{\overline{v}}
\newcommand{\weakstarto}{\stackrel{*}{\rightharpoonup}}
\newcommand{\wto}{\rightharpoonup}
\newcommand{\upto}{\nearrow}
\newcommand{\Tmax}{T_{max}}
\newcommand{\io}{\int_{Ω}}
\newcommand{\ddt}{\frac{\mathrm{d}}{\mathrm{d}t}}
\newcommand{\nn}{\nonumber}
\newcommand{\GNI}{Gagliardo-Nirenberg inequality}
\newcommand{\YI}{Young's inequality}
\newcommand{\na}{∇}
\newcommand{\calS}{\mathcal{S}}
\newcommand{\downto}{\searrow}
\newcommand{\sub}{\subset}
\newcommand{\Cdm}{\mathcal{C}_{δ,m}}
\newcommand{\fatntmax}{\qquad \text{for all } t\in(0,\Tmax)}
\newcommand{\usualspace}[1][\Tmax]{C^0(\Ombar\times[0,#1))\cap C^{2,1}(\Ombar\times(0,#1))}
\newcommand{\cptembeddedinto}{\stackrel{cpt}{\hookrightarrow}}
\newcommand{\embeddedinto}{\hookrightarrow}
\newcommand{\ID}{\mathcal{ID}_K}
\accentedsymbol{\Dbar}{\Bar{D}}
\accentedsymbol{\Dbarbar}{\Bar{\Bar{D}}}
\newcommand{\betr}[1]{\left\lvert#1\right\rvert}
\newcommand{\une}{u_{0,ε}}
\newcommand{\vne}{v_{0,ε}}
\newcommand{\epstilde}{\widetilde{ε}}
\newtheorem{theorem}{Theorem}[section]
\newtheorem{lemma}[theorem]{Lemma}
\theoremstyle{definition}
\newtheorem{defn}[theorem]{Definition}
\title{Locally bounded global solutions to a chemotaxis consumption model with singular sensitivity and nonlinear diffusion}
\author{Johannes Lankeit\thanks{Institut f\"ur Mathematik, Universit\"at Paderborn, Warburger Str.100, 33098 Paderborn, Germany; email: johannes.lankeit@math.uni-paderborn.de}}
\date{}
\begin{document}
\maketitle
\begin{abstract}
\noindent We show the existence of locally bounded global solutions to the chemotaxis system 
\[
 \begin{cases} 
  u_t=\nabla\cdot(D(u)\nabla u) - \nabla\cdot(\frac{u}v\nabla v)&\text{in }\Omega\times(0,\infty)\\
  v_t=\Delta v - uv&\text{in } \Omega\times(0,\infty)\\
  \partial_\nu u = \partial_\nu v = 0 & \text{in } \partial\Omega\times(0,\infty)\\
  u(\cdot,0)=u_0, v(\cdot,0)=v_0&\text{in } \Omega
 \end{cases}
\]
 in smooth bounded domains $\Om\subℝ^N$, $N\geq2$, for $D(u)\geq δu^{m-1}$ with some $δ>0$, provided that $m>1+\frac N4$. \\
{\bf Keywords:} Keller-Segel; chemotaxis; nonlinear diffusion; global existence; boundedness\\
{\bf Math Subject Classification (2010):} 35K55, 35A01, 35K65, 92C17
\end{abstract}
%35B65 View Publications (1980-now) Smoothness and regularity of solutions
%35B40 View Publications (1973-now) Asymptotic behavior of solutions
% 35K55 View Publications (1973-now) Nonlinear parabolic equations
% 35Q30 View Publications (1991-now) Navier-Stokes equations [See also 76D05, 76D07, 76N10]
% 76D05 View Publications (1980-now) Navier-Stokes equations [See also 35Q30]
% 76B03 View Publications (2000-now) Incompressible, inviscid fluids: Existence, uniqueness, and regularity theory [See also 35Q35]
% 35Q35 View Publications (1991-now) PDEs in connection with fluid mechanics
% 92C17 View Publications (2000-now) Cell movement (chemotaxis, etc.)
% 35B35 View Publications (1973-now) Stability
%  35A01 View Publications (2010-now) Existence problems: global existence, local existence, non-existence 
%  35D View Publications (1973-now) Generalized solutions
% 35D05 View Publications (1973-2009) Existence of generalized solutions
% 35D10 View Publications (1973-2009) Regularity of generalized solutions
% 35D30 View Publications (2010-now) Weak solutions
% 35D35 View Publications (2010-now) Strong solutions
% 35D40 View Publications (2010-now) Viscosity solutions
% 35D99 View Publications (1973-now) None of the above, but in this section 
% 35K65 View Publications (1980-now) Degenerate parabolic equations 

\section{Introduction}
Even simple, small organisms can exhibit comparatively complex and macroscopically apparent collective behaviour. Bacteria of the species \textit{E. coli}, for example, when  set in a capillary tube featuring a gradient of nutrient concentration form bands that are visible to the naked eye and migrate with constant speed. %\gray{The driving force behind this phenomenon is that of chemotaxis, the partially directed movement toward higher concentrations of a chemical signal substance (here, the nutrient). An entry point to the extensive mathematical literature on chemotaxis is provided by the review articles \cite{horstmannI,hillen_painter,BBTW}.}
Following experimental works of Adler (see e.g. \cite{adler1966chemotaxis,adler_dahl}), in 1971 Keller and Segel (\cite{KS71_traveling}) introduced a phenomenological model to capture this kind of behaviour, a prototypical version of which is given by  

\begin{equation}\label{intro:system}
 \begin{cases} 
  u_t=\nabla\cdot(D(u)\nabla u) - \nabla\cdot(\frac{u}v\nabla v)&\text{in }\Omega\times(0,\infty)\\
  v_t=\Delta v - uv&\text{in } \Omega\times(0,\infty)\\
 \end{cases}
\end{equation}
with $D(u)\equiv 1$. Herein, $u$ represents the density of bacteria and $v$ is used to denote the concentration of the nutrient. 
In the model in \cite{KS71_traveling}, the diffusion coefficient $D(u)$ is supposed to be constant, thus leading to the typical effect of linear diffusion which causes any population to spread with infinite speed of propagation. In order to avoid this (biologically clearly unrealistic) behaviour, it might be desirable to allow for diffusion of porous medium type (i.e. $D(u)=u^{m-1}$), cf. also \cite[p. 1665]{BBTW}.  

Nevertheless, starting with \cite{KS71_traveling}, the model with linear diffusion has successfully been employed to find travelling wave solutions (see e.g. the overview in \cite{wang_review} and references cited therein) and also their stability has been investigated (\cite{li_li_wang},\cite{nagai_ikeda}). 

%For the construction of other chemotaxis systems admitting TWS, we refer the reader to \cite{horstmann_stevens}; a rigorous general framework for TWS in quasi-linear parabolic equations can be found in \cite{meyries_rademacher_siero}.

In spite of the rich literature concerned with travelling wave solutions (for such solutions to related systems see also \cite{mei_peng_wang},\cite{meyries}, \cite{li_wang_steadily}, or \cite{horstmann_stevens}, \cite{meyries_rademacher_siero}), little is known about existence of solutions for more general initial data (see below).\\ 

The difficulty lies in the hazardous combination of the consumptive effect of the second equation on the nutrient concentration with the singular chemotactic sensitivity in the first: While the second equation compels $v$ to shrink, it is the cross-diffusive contribution of the chemotaxis term that seeks to enlarge the solutions to \eqref{intro:system}. And it is this very term that is furnished with a large coefficient whenever $v$ becomes small.\\

For a moment leaving aside the logarithmic shape of the sensitivity in \mbox{$\na\cdot(\frac uv\na v)=\na\cdot (u\na \log v)$}, we are led to the system 
\begin{equation}\label{eq:nonlogsens}
 \begin{cases}
  u_t=\Delta u - \na \cdot (u\na v),\\
  v_t=\Delta v - uv, 
 \end{cases}
\end{equation}
which also appears as part of chemotaxis fluid systems intensively studied during the past six years. (The interested reader can consult the introduction of \cite{lankeit_ctfluidlogsource}.)
Even in \eqref{eq:nonlogsens}, global existence of classical solutions is not yet known, apart from 2-dimensional settings (\cite{win_ctfluid}) or under smallness conditions on $v_0$ (\cite{tao11_boundedness}).\\ 
%\cite{taowin_12evsmooth}: $u_t=Δu-\na(u\na v)$ $v_t=\Delta v-uv$. $\Om\subset ℝ^3$. Ex of global weak soln bd smooth; convergence to constant equilibria\\

Although the mathematical difficulty in treating the system vastly increases when a logarithmic sensitivity is included, this form is important. Not only is it needed for the emergence of travelling waves (\cite{KS71_traveling,keller_odell,schwetlick}), there are also models giving a detailled mechanistic basis (\cite{xue}) and %theoretically backed-up 
experimental evidence asserting this form (\cite{kalinin_jiang_tu_wu}).\\

In those Keller-Segel models (cf. \cite{horstmannI,hillen_painter,BBTW}) where $v$ does not stand for a nutrient to be consumed but a signalling substance produced by the bacteria themselves, i.e. the evolution is governed by 
\[
 \begin{cases}
  u_t=\Delta u - χ\na\cdot (\frac{u}v\na v),\\
  v_t=\Delta v - v +u, 
 \end{cases}
\]
the singularity in the sensitivity function is mitigated by $v$ tending to stay away from $0$ thanks to the production term in the second equation. 
(For this system, global solutions are known to exist if $χ$ is sufficiently small, where the precise condition depends on the dimension as well as on whether classical (\cite{lankeit_singularsensitivity,win_singularglobal,biler99}) or weak solutions (\cite{stinner_win,win_singularglobal}) are considered and on radial symmetry of initial data (\cite{biler99,nagai_senba98}); but for large $χ$ also blow-up may occur in the corresponding parabolic-elliptic system (\cite{nagai_senba98}).)
The proof of boundedness of solutions for $χ<\sqrt{\frac2N}$ in \cite{fujie} even relies on the second equation actually ensuring a positive pointwise lower bound for $v$. %\cite{BBTW}, Sec. 3.4: Summary of the results L, FW, W $u_t=\Delta u-\na (\frac uv\na v)$, $v_t=Δv-v+u$.

In \eqref{intro:system}, we cannot hope for such a convenient bound and thus have to deal with the influence of the actual singularity in the sensitivity function. \\

Nevertheless, for $D\equiv 1$, in the domains $ℝ^2$ and $ℝ^3$  a global existence result was achieved for initial data that are $H^1\times H^1$-close to $(\ubar,0)$ for some $\ubar>0$  (\cite{wang_xiang_yu}). 
The proof rests on energy estimates for a hyperbolic system into which \eqref{intro:system} can be converted by means of the Hopf-Cole type transformation $q:=\frac{\na v}v$ that had been introduced in \cite{levine_sleeman} for the treatment of an angiogenesis model.  %see also \cite{wang_hillen}

%\cite{li_li_zhao}, \cite{peng_wen_zhu}

More recently it has become possible to treat general initial data (the only restrictions being positivity and regularity assumptions) for the system in bounded planar domains (\cite{win_ctsingabs}), where it was shown that global generalized solutions to \eqref{intro:system} with $D\equiv1$ exist  
whose second component $v$ moreover converges to $0$ with respect to the norm in any $L^p(\Om)$ for $p\in[1,∞)$ and to the weak-$*$ topology of $L^\infty(\Om)$. 
If, moreover, the initial mass of bacteria is small, the solution becomes eventually smooth (\cite{win_ctsingabseventual}) and converges to the homogeneous steady state. In \cite{win_ctsingabseventual} also an explicit smallness condition on $u_0$ in $L\log L (\Om)$ and $\na \ln v_0$ in $L^2(\Om)$ has been found that ensures the global existence of classical solutions. 

Solutions emanating from large data, however, have not been proven to be bounded and might blow up %(wrt $\norm[\Liom]{u(\cdot,t)} 
and cease to exist as classical solutions after a finite time, continuing only as generalized solutions in the sense of \cite{win_ctsingabs}. 
In higher-dimensional domains, even the existence of such solutions is unknown. Only in a radially symmetric setting ``renormalized solutions'' have been constructed (\cite{win_ctsingabsrenormalized}).\\

In the present article, we aim to find solutions to \eqref{intro:system} that are locally bounded and hence do not blow up in finite time. For this, we will rely on stronger growth of $D$, i.e. on the nonlinear diffusion we want to include. % \gray{which also was helpful in providing boundedness for solutions to other chemotaxis systems, cf. ...}
More precisely, we assume that with some $m\geq 1$, which will be subject to further conditions, and $δ>0$ 
\[
 D\in \Cdm:=\set{d\in C^1([0,∞)); d(s)\geq δs^{m-1} \text{ for all } s\in [0,∞)}. %C^{1+\alpha}? Nö. Auch C^1 nur für \ref{exestimparab:taowin}.  
\]
In a first step we will additionally require strict positivity of $D$, i.e. 
\[
 D\in \Cdm^+:=\set{d\in C^1([0,∞)); d(s)\geq δs^{m-1} \text{ for all } s\in [0,∞) \text{ and } d(0)>0}
\]
and prove global existence of classical solutions to \eqref{intro:system}: 
\begin{theorem}\label{thm:nondegenerate}
 Let $N\geq 2$ and $\Om\subℝ^N$ be a bounded smooth domain. Then for every $δ>0$ and $m\geq 1$ satisfying 
\begin{equation}\label{mcond}
 m > 1+ \frac{N}4,
\end{equation}
every $D\in \Cdm^+$ and every pair $(u_0,v_0)$ of initial data fulfilling 
\begin{equation}\label{initcond}
 u_0\in C^{α}(\Ombar) \text{ for some }α\in(0,1), \qquad %\\
 v_0\in W^{1,\infty}(\Om), \qquad%\\ 
 u_0\geq 0,\quad %\\
 v_0>0 \quad \text{ in }\Ombar
\end{equation}
the initial boundary value problem 
\begin{subequations}\label{sys}
\begin{align}
 u_t&=\na\cdot(D(u)\na u) - \na\cdot \kl{\frac uv\na v}&&\text{in } \Om\times(0,\Tmax)\label{sys:u}\\
 v_t&=\Delta v-uv&&\text{in } \Om\times(0,\Tmax)\label{sys:v}\\
 \delny u&=0&&\text{in } \dOm\times(0,\Tmax)\label{sys:ubdry}\\
 \delny v&=0&&\text{in } \dOm\times(0,\Tmax)\label{sys:vbdry}\\
 u(\cdot,0)&=u_0&&\text{in } \Om\label{sys:uinit}\\
 v(\cdot,0)&=v_0&&\text{in } \Om\label{sys:vinit}
\end{align}
\end{subequations}
has a classical solution $(u,v)\in (\usualspace)^2$ which is global (i.e. $\Tmax=\infty$). 
% and satisfies 
% \begin{equation}\label{locbd}
%  \norm[L^{\infty}(\Om\times(0,T))]{u}<∞ \qquad \text{for every } T\in(0,∞).% obvious for continuous functions
% \end{equation}
\end{theorem}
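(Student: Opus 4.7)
I propose splitting the argument into three stages: (i) local existence with elementary a priori information, (ii) a coupled energy estimate that isolates the role of the structural condition $m>1+\tfrac N4$, and (iii) a bootstrap delivering local-in-time $L^\infty$-bounds on both $u$ and $1/v$, ruling out blow-up.

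Since $D\in\Cdm^+$ satisfies $D(0)>0$, \eqref{sys:u} is uniformly parabolic whenever $v$ stays away from $0$, and a standard Amann- or Schauder-type fixed-point argument produces a classical solution $(u,v)$ on a maximal interval $(0,\Tmax)$ together with the continuation criterion that $\Tmax<\infty$ implies $\limsup_{t\uparrow\Tmax}(\norm[\Liom]{u(\cdot,t)}+\norm[\Liom]{1/v(\cdot,t)})=\infty$. Testing \eqref{sys:u} against $1$ gives mass conservation $\io u\equiv\io u_0$, and the parabolic maximum principle applied to \eqref{sys:v} (using $u\geq 0$ and $v_0>0$) yields $0<v\leq\norm[\Liom]{v_0}$ throughout $[0,\Tmax)$. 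The missing ingredient at this point is a pointwise lower bound for $v$: it is exactly the potential $v\downto 0$ that would let the chemotactic coefficient $1/v$ blow up.

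To control this, I would pass to the Hopf--Cole-type variable $w:=-\ln v$, which satisfies $w_t=\Delta w-\betr{\na w}^2+u$ and converts the drift in \eqref{sys:u} into a regular term $\na\cdot(u\na w)$, and then estimate the coupled functional $y_p(t):=\io u^p+A\io\betr{\na w}^{2q}$ for suitably chosen $p\gg 1$, $q\geq 1$, and $A>0$. Testing the $u$-equation against $u^{p-1}$ yields the porous-medium dissipation $\delta(p-1)\io u^{p+m-3}\betr{\na u}^2$ and a cross term which, after integration by parts and the use of the $w$-equation, is dominated by a multiple of $\io u^{p+1}+\io u^p\betr{\na w}^2$. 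A parallel computation for $\io\betr{\na w}^{2q}$ provides a Hessian-type dissipation $\io\betr{\na w}^{2q-2}\betr{D^2w}^2$ along with analogous coupling terms; for $A$ chosen appropriately one absorbs the $\io u^p\betr{\na w}^2$ contribution from both sides and is left with the single task of controlling $\io u^{p+1}$ by a small fraction of $\io\betr{\na u^{(p+m-1)/2}}^2$. This is the principal obstacle, and exactly here the hypothesis $m>1+\tfrac N4$ enters: applying the \GNI{} to $f=u^{(p+m-1)/2}$ anchored at $\io u=\io u_0$ yields the required absorption \emph{precisely when} the resulting interpolation exponent stays strictly subcritical, which is equivalent to \eqref{mcond}. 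The outcome is a Grönwall-type inequality for $y_p$ on every $[0,T]\cap[0,\Tmax)$ and hence $u\in L^\infty((0,T);\Lom{p})$ for each $p<\infty$ and every finite $T$.

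Once $p$ is taken large (say $p>N$), a Moser iteration applied to \eqref{sys:u}, or an $\Lom{p}$--$\Liom$ heat-semigroup argument using the already-controlled $\na v/v=-\na w$, upgrades the previous bound to $u\in L^\infty(\Om\times(0,T))$ for each finite $T$. Comparison with the solution of $\vtilde_t=\Delta\vtilde-C_T\vtilde$ in \eqref{sys:v} then produces a positive pointwise lower bound for $v$ on each $\Om\times[0,T]$. Together these two conclusions contradict the continuation criterion of stage (i) unless $\Tmax=\infty$, finishing the proof. The decisive step is thus the Gagliardo--Nirenberg balance in the middle stage: everything else is essentially bookkeeping around the local existence theory and standard bootstrap machinery.
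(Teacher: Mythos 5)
Your proposal takes a genuinely different route from the paper, but as sketched it has a gap that I do not think can be patched without essentially re-deriving the paper's intermediate results.

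The paper does not work with a single coupled Lyapunov functional. Instead it proceeds through a cascade: a spatio-temporal $L^2$-bound for $\na w$ (Lemma \ref{lem:intntionaw:2}) feeds into bounds on $\io u^{m-1}$ and $\intnt\io|\na u^{m-1}|^2$ (Lemma \ref{lem:ionaumme}), which give $\int_0^t\norm[\Lom p]{u}^r$ bounds (Lemma \ref{lem:ulpr}); these are then converted via maximal Sobolev regularity of the heat semigroup into spatio-temporal $L^q$-bounds on $\na v$ (Lemma \ref{lem:upr.gives.navq}) \emph{and}, crucially, into a local-in-time $L^\infty$-bound for $w$ itself by a Duhamel/semigroup argument (Lemma \ref{lem:wbd.ifupr}, Lemma \ref{lem:wbd}); only then can one transfer $\na v$-bounds to $\na w$-bounds (Lemma \ref{lem:nav.gives.naw}) and close the loop with the energy-type estimate for $\io u^p$ (Lemma \ref{lem:naw.q.gives.u.p}). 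The condition $m>1+\frac N4$ arises as the window in which the parameter ranges of several of these lemmata overlap simultaneously (see the explicit parameter choices in Lemma \ref{lem:wbd} and Lemma \ref{lem:intupbounded}), not as the threshold of a single Gagliardo–Nirenberg interpolation.

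There are two concrete problems with your coupled functional $y_p(t)=\io u^p+A\io|\na w|^{2q}$. First, the ``parallel computation'' for $\io|\na w|^{2q}$ uses the equation $w_t=\Delta w-|\na w|^2+u$; the term $-|\na w|^2$ contributes, after the standard Bochner-type integration by parts, a term of the form $\io|\na w|^{2q-2}(D^2w\,\na w)\cdot\na w$ which has no favourable sign, and after Young's inequality produces $+\io|\na w|^{2q}$ on the right. To absorb this you would need a Gagliardo–Nirenberg interpolation for $|\na w|^{q}$, but the only elementary anchor available is the spatio-temporal $L^2$-bound $\intnt\io|\na w|^2\leq\io w_0+t\io u_0$; a pointwise-in-time anchor for $\na w$ is precisely what is missing at this stage, and the paper obtains it only through the intermediate $L^\infty$-bound on $w$ (Lemmas \ref{lem:wbd.ifupr}--\ref{lem:wbd}), which is derived by semigroup estimates, not by energy methods. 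Without it, the Grönwall inequality for $y_p$ does not close. Second, your claim that the GNI step is ``equivalent to'' $m>1+\frac N4$ is not supported by the computation you indicate. Applying the GNI to $f=u^{(p+m-1)/2}$ anchored at $\io u=\io u_0$ to absorb $\io u^{p+1}$ requires only $a\gamma<2$, which works out to $m>2-\frac2N$, a strictly weaker hypothesis than \eqref{mcond} for every $N\geq 2$. So the claimed mechanism by which \eqref{mcond} enters is not identified. (As a minor point, your continuation criterion tracking $\norm[\Liom]{1/v}$ is redundant: once $\norm[\Liom]{u}$ is controlled on $(0,T)$, a comparison argument in \eqref{sys:v} gives $v\geq(\inf v_0)e^{-Ct}>0$, so the singular coefficient is automatically controlled; the paper's extensibility criterion \eqref{eq:extcrit} only involves $u$.)
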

Afterwards dropping the strict positivity asumption on $D$, we will use an approximation procedure and finally prove the existence of global weak solutions that are locally bounded:
\begin{theorem}\label{thm:degenerate}
 Let $N\geq 2$ and $\Om\subℝ^N$ be a bounded smooth domain. Then for every $δ>0$ and $m>1+\frac N4$, every initial data 
\begin{equation}\label{initcond.weak}
 u_0\in\Lmme, \qquad v_0\in W^{1,\infty}(\Om), \qquad u_0\geq 0, \qquad v_0>0
\end{equation}
%$(u_0,v_0)$ as in \eqref{initcond}
and every $D\in \Cdm$, \eqref{sys} has a global locally bounded weak solution $(u,v)$ (in the sense of Definition \ref{def:weaksol}), which in particular satisfies
\[
\norm[L^{\infty}(\Om\times(0,T))]{u}<∞ \qquad \text{for every } T\in(0,∞).
\]
\end{theorem}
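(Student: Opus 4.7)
The natural strategy is to approximate the possibly degenerate diffusion by non-degenerate ones covered by Theorem \ref{thm:nondegenerate} and then pass to the limit. For $\varepsilon\in(0,1)$, set $D_\varepsilon(s):=D(s)+\varepsilon$; then $D_\varepsilon\in\Cdm^+$, since $D_\varepsilon(s)\geq\delta s^{m-1}$ and $D_\varepsilon(0)\geq\varepsilon>0$. Because $u_0$ is only assumed to lie in $\Lmme$, I would also regularize by a sequence $\une\in C^{\alpha}(\Ombar)$ with $\une\geq0$, $\une\to u_0$ in $\Lmme$, and such that $\norm[L^1(\Om)]{\une}$ and $\norm[\Lmme]{\une}$ are bounded uniformly in $\varepsilon$. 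Theorem \ref{thm:nondegenerate} then provides, for each $\varepsilon\in(0,1)$, a global classical solution $(u_\varepsilon,v_\varepsilon)$ of \eqref{sys} with diffusion $D_\varepsilon$ and data $(\une,v_0)$.

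The heart of the matter, and the step I expect to be the main obstacle, is to derive $\varepsilon$-uniform estimates. The maximum principle applied to \eqref{sys:v} immediately yields $0<v_\varepsilon\leq\norm[\Liom]{v_0}$ on $\Om\times(0,\infty)$. The decisive task is then to revisit the a priori bounds leading to boundedness of $u_\varepsilon$ in the proof of Theorem \ref{thm:nondegenerate} and to confirm that they depend only on $T$, $\delta$, $m$, $\Om$, $v_0$, and on $\norm[L^1(\Om)]{\une}$ together with $\norm[\Lmme]{\une}$, but neither on the additional constant $\varepsilon$ nor on the finer regularity of $\une$. The lower bound $D_\varepsilon(s)\geq\delta s^{m-1}$ being independent of $\varepsilon$ is crucial; one has to check that each step of those estimates (testing $u$-equation by $u^{p-1}$ powers, combining with $\na v$-estimates and a Moser-type iteration) survives as $D_\varepsilon\downto D$ and as $\une$ is allowed to approach a mere $L^{\max\{1,m-1\}}$-function.

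Once $\norm[L^\infty(\Om\times(0,T))]{u_\varepsilon}\leq C(T)$ is established uniformly in $\varepsilon$, the equation \eqref{sys:v} with coefficient $u_\varepsilon$ satisfies $(v_\varepsilon)_t\geq\Delta v_\varepsilon-C(T)v_\varepsilon$, and a parabolic comparison argument together with $v_0>0$ in $\Ombar$ delivers a strictly positive lower bound $v_\varepsilon\geq\eta(T)>0$ on $\Ombar\times[0,T]$ which is again $\varepsilon$-independent. Standard parabolic regularity then yields uniform bounds on $\na v_\varepsilon$ and on $\na\Phi_\varepsilon(u_\varepsilon)$ with $\Phi_\varepsilon'=D_\varepsilon$, plus bounds on the time derivatives suitable for an Aubin-Lions-type compactness argument. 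Along some subsequence $\varepsilon_k\downto 0$ one thus obtains $u_{\varepsilon_k}\to u$ and $v_{\varepsilon_k}\to v$ strongly in appropriate $L^q$-spaces and a.e., with $\na v_{\varepsilon_k}\wto\na v$.

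It remains to pass to the limit in the weak formulation. The diffusion term $\na\Phi_\varepsilon(u_\varepsilon)$ converges to $\na\Phi(u)$ (where $\Phi'=D$) using the uniform $L^\infty$-bound and the strong convergence of $u_\varepsilon$. The potentially singular chemotaxis term $\frac{u_\varepsilon}{v_\varepsilon}\na v_\varepsilon$ is tamed on $\Om\times(0,T)$ by the lower bound $v_\varepsilon\geq\eta(T)$: strong convergence of $\frac{u_\varepsilon}{v_\varepsilon}$ combined with weak convergence of $\na v_\varepsilon$ identifies the limit; the consumption term $u_\varepsilon v_\varepsilon$ is immediate. Thus $(u,v)$ fulfills the weak formulation of \eqref{sys} in the sense of Definition \ref{def:weaksol}, and $\norm[L^\infty(\Om\times(0,T))]{u}<\infty$ follows from the uniform bound on $u_\varepsilon$ by lower semicontinuity.
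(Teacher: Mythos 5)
Your proposal follows essentially the same route as the paper: regularize $D$ into $\Cdm^+$ (the paper takes $D_\varepsilon(s)=D(s+\varepsilon)$ rather than your $D(s)+\varepsilon$, but both work), regularize the initial data, invoke Theorem \ref{thm:nondegenerate}, obtain $\varepsilon$-uniform bounds, and pass to the limit via Aubin--Lions compactness and a diagonal argument. The hard work you defer to ``revisiting the a priori bounds'' is precisely what the paper packages in Lemma \ref{lem:bounds}, whose estimates --- including the $L^\infty$-bound on $\frac1v\na v$ used in place of your explicit lower bound $v_\varepsilon\geq\eta(T)$ --- are formulated uniformly over all $D\in\Cdm$ rather than $\Cdm^+$, so that $\varepsilon$-independence of the constants is built in from the outset.
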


We will devote Section \ref{sec:locex} to the proof of local existence of solutions and an extensibility criterion. %{Was Ähnliches gibt's schon, aber das enthält Fehler.} 
In the proof of boundedness that follows, we will sometimes use the system 
\begin{equation}\label{uweq}
 \begin{cases}
 u_t=\na\cdot(D(u)\na u)-\na\cdot(u\na w)\\
 w_t=\Delta w -|\na w|^2 +u  
 \end{cases}
\end{equation}
obtained from the transformation $w=-\log(\frac{v}{\norm[\Liom]{v_0}})$, which has also been used in \cite{win_ctsingabs,win_ctsingabseventual}.
We note that while the first equation seems more accessible in \eqref{uweq} due to the lack of any singularity, it is \eqref{sys}, where the second equation is more amenable to the derivation of estimates on $\na v$.

The first stepping stone for the proof will be a spatio-temporal $L^2$-bound for $\na w$ (Lemma \ref{lem:intntionaw:2}), already giving some boundedness information for $\int_0^t\io |\na u^{m-1}|$ and $\io u^{m-1}(\cdot,t)$ for $t>0$, which we can use to obtain bounds on $\intnt\io |\na u^{m-1}|$ (Lemma \ref{lem:ionaumme}) and thereby on \(
 \intnt\norm[p]{u}^r 
\) 
for certain $p$, $r$ and $t>0$ (Lemma \ref{lem:ulpr}).
One consequence of such bounds is a spatio-temporal $L^q$-bound on $\na v$ (see Lemma \ref{lem:upr.gives.navq}), derived with the help of maximal Sobolev regularity properties of the heat equation (cf. Lemma \ref{lem:maxSobolev}). Another is the (local-in-time) boundedness of $w$ (Lemma \ref{lem:wbd}). This is important, as it will enable us to transfer bounds from $\na v$ to $\na w$ (Lemma \ref{lem:nav.gives.naw}).

Bounds on $\intnt\io |\na w|^q$ now in turn will translate into control over $\io u^p$ for some $p$ (Lemma \ref{lem:nawq.gives.up.for.pgeqn}). If $p$ is sufficiently large, this entails $L^\infty(\Om\times(0,T))$- boundedness of $|\na v|$ and $|\na w|$ and thus finally of $u$ (Lemma \ref{lem:naw.q.gives.u.p} and Lemma \ref{lem:ex.estimates.parab} \ref{lem:exestimparab:taowin}). Thereby, the solution is not only locally bounded, but moreover exists globally, according to the extensibility criterion \eqref{eq:extcrit}.

In Section \ref{sec:weaksol} we rely on bounds already derived in the previous section to construct locally bounded weak solutions to \eqref{intro:system} with functions $D$ causing possibly degenerate diffusion.

% \red{For the estimates necessary for this procedure to hold uniformly on intervals of the form $[0,T)$ instead of only $(\tau,T)$ for some $τ>0$, during most of Section \ref{sec:nondeg} we will demand that the initial data satisfy 
% \begin{equation}\label{initcond}
%  u_0\in C^{α}(\Ombar) \text{ for some }α\in(0,1), \qquad %\\
%  v_0\in C^2(\Ombar), \qquad%\\ 
%  \delny v_0\amrand=0,\qquad 
%  u_0\geq 0,\qquad %\\
%  v_0>0. 
% \end{equation}
% For the proof of Theorem \ref{thm:nondegenerate}, we can remove this requirement in Lemma \ref{lem:bounds} \ref{bounds:exist}.
% }

\textbf{Notation.} Throughout the article we fix $N\in ℕ$, $N\geq 2$, and $\Om\sub ℝ^N$ as a bounded, smooth domain. When dealing with the solution to a differential equation, we will use $\Tmax$ to denote its maximal time of existence; in the case of \eqref{sys} such $\Tmax$ is provided by Lemma \ref{lem:locex:and:continuation}. By $\embeddedinto$ and $\cptembeddedinto$ we refer to continuous and compact embeddings of Banach spaces, respectively. We will sometimes write $D(u)$ for the concatenation $D\circ u$ of functions. The number $\lambda_1>0$ will always be the first positive eigenvalue of the Neumann Laplacian.

\section{Local existence}
\label{sec:locex}
We begin the proof by ensuring local existence of classical solutions in the non-degenerate case. As a first step let us, for easier reference, collect some basic results on existence of and estimates for solutions of certain parabolic PDEs.

\begin{lemma}\label{lem:ex.estimates.parab}
% Let $N∈ℕ$ and let $Ω\subsetℝ^N$ be a bounded domain with smooth boundary.
%\begin{enumerate}[label={\roman*)},leftmargin=0em, itemindent=0.5cm, topsep=0cm, itemsep=0cm]
\beenum
  \item \label{lem:exestimparab:vholder}
 For any $T>0$, $q>N$ and $r>N$ and every $M>0$ there are $C_{i}>0$ and $γ>0$ such that for all nonnegative functions $v_0\in\Wom1q$ and $u\in \LT{∞}{\Lom r}$ satisfying $\norm[\Wom1q]{v_0}\leq M$ and $\norm[\LT{∞}{\Lom r}]{u}\leq M$ for the solution $v\in V_2=\{v\in L^\infty((0,T);\Lom2); \na v\in L^2(\Om\times(0,T))\}$ of 
\begin{equation}\label{eq:estlemma:v}
 v_t=Δv - uv, \quad \delny v\amrand=0, \quad v(\cdot,0)=v_0
\end{equation}
 one has $\normm{C^{γ,\frac{γ}2}(\Ombar\times[0,T])}{v}<C_{i}$. 
 If, moreover, $u\in C^{α,\frac{α}2}(\Ombar\times(0,T])$ for some $\alpha\in(0,γ)$, then $v\in C^0(\Ombar\times[0,T])\cap C^{2+α,1+\frac{α}2}(\Ombar\times(0,T])$. If $u\in L^\infty(\Om\times(0,T))$, then $v\in C^1(\Ombar\times(0,T])$.
\item \label{lem:exestimparab:nav}
  For any $r∈(N,∞]$ there is $C_i=C_{ii}(r)>0$ such that for any $T>0$ and any $q\in[2,∞]$ for all nonnegative functions $v_0\in \Wom1q$ and $u\in C^{α,\frac{α}2}(\Ombar\times(0,T))$ for some $α\in(0,1)$, the solution $v\in C^{γ,\frac{γ}2}(\Ombar\times[0,T])$ (for some $γ\in(0,α)$) of 
\eqref{eq:estlemma:v} satisfies 
 \[
  \norm[\Lom q]{∇v(\cdot,t)}\leq C_i\norm[\Lom q]{∇v_0} + C_{ii}\norm[\Lom{∞}]{v_0}\norm[L^{∞}((0,T);\Lom r)]{u}.
 \]
 \item \label{lem:exestimparab:ulinfty}
%probably the assumption on g is not optimal. But a citable reference seems difficult too find and this is sufficient for our purpose.
 For every $T>0$, $δ_0>0$, $M>0$ and $K>0$ there is $C_{iii}>0$ such that for every $u_0∈\Lom{∞}$ satisfying $0\leq u_0\leq M$ and every $g\in\kl{C^0((\Ombar\times(0,T))}^N$ fulfilling $g\cdotν=0$ on $\dOm$ and $\norm[\Lom{∞}]{g}\leq K$, and for all $A\in\Li(Ω\times(0,T))$ with $A>δ_0$ in $Ω\times(0,T)$, the unique weak solution of 
\begin{equation}\label{eq:estlemma:u}
 u_t=∇\cdot (A∇u-g) \text{ in } \Om\times(0,T), \quad \delny u\amrand=0 \text{ in } (0,T), \quad u(\cdot,0)=u_0 \text{ in } \Om, 
\end{equation}
 satisfies 
 \begin{equation}\label{eq:uinfty:regestimate}
  \norm[\Li(Ω\times(0,T))]{u}\leq C_{iii} \quad\text{and}\quad \norm[L^2(Ω\times(0,T))]{∇u}\leq C_{iii}.
 \end{equation}
 \item \label{lem:exestimparab:uholder}
%here already ``worse spaces'' for g would be allowed. 
  For any $T>0$, for any $D_0>δ_0>0$, $M>0$, $K>0$ and $α∈(0,1)$ there are $C_{iv}>0$ and $γ∈(0,1)$ such that for every $A\in \Li(\Omega\times(0,T))$ fulfilling $δ_0<A<D_0$ a.e. in $\Om\times(0,T)$, and for all $g∈\kl{C^0(\Ombar\times(0,T))}^N$ with $g\cdotν=0$ on $\dOm$ and $\norm[\Li(Ω\times(0,T))]{g}\leq M$ and all $u_0\in C^{α}(\Ombar)$ with $\norm[C^{α}(\Ombar)]{u_0}\leq M$, any solution $u$ of \eqref{eq:estlemma:u} that obeys the estimate $\norm[\Li(Ω\times(0,T))]{u}\leq K$ satisfies 
\begin{equation}\label{reg:uholder:estimate}
 \normm{C^{γ,\frac{γ}2}(\Ombar\times[0,T])}{u}\leq C_{iv}.
\end{equation}
 Moreover, if $g∈C^{β,\frac{β}2}(\Ombar\times(0,T])$ for some $β>0$, then $u\in C^{2,1}(\Ombar\times(0,T])$.
\item \label{lem:exestimparab:taowin}
For every $m\geq 1$, $δ>0$, $K>0$, $p_0\geq 1$, $q_1>n+2$ and $T\in(0,∞]$ there is $C_{v}>0$ such that for every $D\in C^1(\Ombar\times[0,T)\times[0,\infty))$ which obeys $D\geq 0$, $D(x,t,s)\geq δs^{m-1}$ for all $(x,t,s)\in\Om\times(0,T)\times(0,∞)$ and every $f\in C^0((0,T);C^0(\Ombar)\cap C^1(\Om)), f\cdotν\leq 0$ on $\dOm\times(0,T)$ satisfying $\norm[L^\infty((0,T);\Lom{q_1})]{f}\leq K$, for every nonnegative function $u\in C^0(\Ombar\times[0,T))\cap C^{2,1}(\Ombar\times(0,T))$ that satisfies $\norm[L^\infty((0,T);\Lom{p_0})]{u}\leq K$ and $u_t\leq \nabla\cdot(D(x,t,u)\nabla u)+∇\cdot f(x,t)$ in $\Om\times(0,T)$ and $\delny u\amrand\leq 0$ on $(0,T)$, we have $\norm[\Liom]{u(\cdot,t)}\leq C_v$ for every $t\in(0,T)$.
\end{enumerate}

\end{lemma}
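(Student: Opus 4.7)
The lemma is a compendium of five essentially standard parabolic regularity estimates, and my plan is to handle each item by a direct appeal to classical theory together with whatever bookkeeping its precise formulation requires; only item \ref{lem:exestimparab:taowin} requires genuine technical work, and that is where I expect the main obstacle to lie.

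For \ref{lem:exestimparab:vholder}, the decisive observation is that $u\geq 0$ forces $0\leq v\leq\norm[\Liom]{v_0}$ by the maximum principle, so $-uv$ lies in $L^{\infty}((0,T);\Lom{r})$ with $r>N$ while $v_0\in\Wom{1}{q}\embeddedinto C^{\beta}(\Ombar)$ for $q>N$. Classical Hölder regularity for the Neumann heat equation with bounded right-hand side (e.g.\ Ladyzhenskaya--Solonnikov--Ural'tseva, or Porzio--Vespri) then gives the uniform $C^{\gamma,\gamma/2}$-bound. The higher-regularity statements are a bootstrap: if $u\in C^{\alpha,\alpha/2}$ then $uv$ is Hölder in both variables, and parabolic Schauder theory yields $v\in C^{2+\alpha,1+\alpha/2}$; if only $u\in L^{\infty}$, parabolic $L^{p}$-theory combined with Sobolev embedding gives $v\in C^{1}$. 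Item \ref{lem:exestimparab:nav} follows from differentiating the Duhamel representation $v(\cdot,t)=e^{t\Delta}v_0-\intnt e^{(t-s)\Delta}(uv)(\cdot,s)\ds$ and applying the Neumann semigroup gradient estimates $\norm[\Lom{q}]{\na e^{t\Delta}\varphi}\leq C\norm[\Lom{q}]{\na \varphi}$ and $\norm[\Lom{q}]{\na e^{t\Delta}\varphi}\leq C\kl{1+t^{-\frac12-\frac{N}{2}(\frac{1}{r}-\frac{1}{q})}}e^{-\lambda_1 t}\norm[\Lom{r}]{\varphi}$, combined with the pointwise bound $v\leq\norm[\Liom]{v_0}$; the hypothesis $r>N$ is exactly what keeps the resulting time integral finite.

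For \ref{lem:exestimparab:ulinfty} and \ref{lem:exestimparab:uholder} the equation is a linear divergence-form parabolic problem with uniformly elliptic bounded measurable diffusion $A$ and bounded driftlike term $g$. The $L^{\infty}$-bound in \ref{lem:exestimparab:ulinfty} I would obtain by a standard Moser iteration testing with $(u-k)_+^{p-1}$ and absorbing gradient terms through $A\geq\delta_0$, while the $L^{2}$-bound on $\na u$ drops out of the basic energy identity tested against $u$. The Hölder estimate in \ref{lem:exestimparab:uholder} is the De Giorgi--Nash--Moser theorem for divergence-form parabolic equations with bounded measurable coefficients (as in Lieberman), and the $C^{2,1}$-upgrade for Hölder $g$ is standard parabolic Schauder theory.

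The final item \ref{lem:exestimparab:taowin} is the genuine technical statement and is essentially the Moser-type iteration developed by Tao and Winkler, adapted to supersolutions of a possibly nonlinearly and degenerately diffusive equation. My plan is to test $u_t\leq \na\cdot(D(x,t,u)\na u)+\na\cdot f$ against $u^{p-1}$, use $D(x,t,u)\geq\delta u^{m-1}$ to generate a dissipative term proportional to $\io |\na u^{(m+p-1)/2}|^{2}$, control the contribution of $\na\cdot f$ after integration by parts via Hölder's and \YI{} together with $\norm[L^{\infty}((0,T);\Lom{q_1})]{f}\leq K$, and interpolate through the \GNI{} against the a priori bound $\norm[L^{\infty}((0,T);\Lom{p_0})]{u}\leq K$. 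The condition $q_1>n+2$ governs the scaling in this interpolation and is precisely what keeps the sequence of $L^{p_k}$-bounds uniform as $p_k\to\infty$; once this is achieved, taking the limit $p_k\to\infty$ yields the claimed $L^{\infty}$-bound, and I would follow the Tao--Winkler argument essentially verbatim.
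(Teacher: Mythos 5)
Your proposal tracks the paper's proof closely in both structure and the technical ingredients it invokes, so the verdict is that the approach is essentially the same. For item~\ref{lem:exestimparab:vholder} you use the maximum principle to get $0\le v\le\norm[\Liom]{v_0}$, place $uv$ in $L^\infty((0,T);\Lom r)$ with $r>N$, and appeal to the Porzio--Vespri/LSU Hölder theory and a Schauder bootstrap for higher regularity --- precisely what the paper does, which cites \cite{LSU}, \cite{porzio_vespri} and \cite{lieberman_holdergradient}. For item~\ref{lem:exestimparab:nav} the Duhamel representation combined with Neumann semigroup gradient estimates and the comparison bound $v\le\norm[\Liom]{v_0}$ is again identical; the one small wrinkle the paper handles that your sketch glosses over is the case $r<q$, where the paper introduces $\rho=\max\{q,r\}$ and a Hölder interpolation factor $|\Omega|^{1/q-1/\rho}$ before invoking the $L^r\to W^{1,\rho}$ smoothing estimate, but this is bookkeeping rather than substance. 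For items~\ref{lem:exestimparab:ulinfty} and~\ref{lem:exestimparab:uholder} you propose to carry out De Giorgi/Moser-type iteration and energy estimates by hand, whereas the paper simply cites \cite[Thms.~6.38--6.40]{lieberman_book}; and for item~\ref{lem:exestimparab:taowin} you outline the Moser iteration that underlies \cite[Lemma~A.1]{taowin_bdnessquasilinsubcritical}, which the paper invokes directly by citation. In those two places your route is not wrong but is considerably more labor-intensive than the paper's, which deliberately delegates to textbook and existing results; since you yourself note you would ``follow the Tao--Winkler argument essentially verbatim,'' this is a difference of presentation rather than of mathematical content.
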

\begin{proof}
\textbf{\ref{lem:exestimparab:vholder}} %Note that $W^{1,q}$ embeds into some $C^\gamma$.
According to \cite[III.5.1]{LSU}, \eqref{eq:estlemma:v} has a unique weak solution $v\in V_2$ in $Ω\times(0,T)$. 
The first part of the statement thus immediately results from \cite[Thm. 1.3 and Remarks 1.3, 1.4]{porzio_vespri}, whereas the second is a consequence of a uniqueness statement (\cite[III.5.1]{LSU}) combined with the existence assertion for classical solutions in \cite[IV.5.3]{LSU} (applied to $ζ_{ε}(t)v(x,t)$ for some cutoff function $ζ_{ε}\in C_0^\infty([0,∞))$, $ζ_{ε}\restr{(0,\frac{ε}2)}\equiv 0$, $ζ_{ε}\restr{(ε,∞)}\equiv 1$ for arbitrary $ε>0$). The third part - actually, even Hölder-continuity of $\na v$ - is provided by \cite[Thm. 1.1]{lieberman_holdergradient}.

\textbf{\ref{lem:exestimparab:nav}} Existence of a solution ensured as in the proof of \ref{lem:exestimparab:vholder}, we may rely on \cite[Cor. 4.3.3]{pazy} to represent $v$ as mild solution via the variation of constants formula, and invoking \cite[Lemma 1.3 iii)]{win_aggregationvs} and \cite[Lemma 1.3 ii)]{win_aggregationvs}, we gain $c_1>0$ and $c_2>0$, respectively, such that with $ρ:=\max\set{q,r}$ and by Hölder's inequality
 \begin{align*}
   \norm[\Lom q]{∇v(\cdot,t)}&\leq\! c_1 \norm[\Lom q]{∇v_0}\! +\! \intnt c_2 |Ω|^{\frac1q-\frac1{ρ}}\kl{1+(t-s)^{-\frac12-\frac N2(\frac1r-\frac1{ρ})}}\! \norm[\Lom r]{uv(\cdot,s)}\!e^{-λ_1(t-s)}\!\ds\\
   &\leq c_1\norm[\Lom q]{∇v_0}+c_3\norm[\Lom{∞}]{v_0}\norm[\LT{∞}{\Lom r}]{u},
 \end{align*}
 %1.3iii) needs q\geq 2, 1.3ii) requires 1\leq r\leq ρ\leq \infty
 where we have used that $σ^{-\frac12-\frac N2(\frac1r-\frac1{ρ})}\leq 1+σ^{-\frac12-\frac N{2r}}$ for all $σ>0$ and all $ρ∈[1,∞]$ and set $c_3:=\intninf c_2\kl{2+σ^{-\frac12-\frac N{2r}}}e^{-λ_1σ}\dsigma$, which is finite because of $r>N$, and where we have taken into account that by comparison arguments %or by positivity of the heat semigroup if you prefer this for non-classical solutions 
 $0\leq v(\cdot,t)\leq v_0$ in $Ω$ for all $t∈(0,T)$.

\textbf{\ref{lem:exestimparab:ulinfty}} This is a combination of \cite[Thm. 6.38]{lieberman_book} (estimate for $\na u$), \cite[Thm. 6.39]{lieberman_book} (existence and uniqueness) and \cite[Thm. 6.40]{lieberman_book} (uniform boundedness).

\textbf{\ref{lem:exestimparab:uholder}} The same theorems as in the proof of \ref{lem:exestimparab:vholder} apply. 
 
\textbf{\ref{lem:exestimparab:taowin}} This is (part of) Lemma A.1 in \cite{taowin_bdnessquasilinsubcritical}.
\end{proof}

\begin{lemma}\label{lem:locex_first}
%Let $N∈ℕ$ and $Ω\subsetℝ^N$ be a bounded domain with smooth boundary. Then 
For every positive function $D\in C^0([0,∞))$, for every $L>0$ and $α\in(0,1)$ there is $T>0$ such that 
for every $u_0\in C^{\alpha}(\Ombar)$ satisfying $\norm[C^{α}(\Ombar)]{u_0}\leq L$ and every $v_0\in \Wom1{∞}$ which satisfies $v_0>\frac1L$ in $\Ombar$ and fulfils $\norm[\Wom1{∞}]{v_0}\leq L$ there is a pair of functions $(u,v)\in C^0(\Ombar\times[0,T])∩C^{2,1}(\Ombar\times(0,T])$ solving \eqref{sys} in $Ω\times(0,T)$. 
\end{lemma}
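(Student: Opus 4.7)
The plan is a Schauder fixed-point argument that exploits Lemma~\ref{lem:ex.estimates.parab} by decoupling \eqref{sys} into two linear subproblems; since $D$ is a \emph{positive} continuous function, $D(\utilde)$ stays uniformly bounded below on any compact range, so no degeneracy intrudes. Fix $K>\norm[\Liom]{u_0}+1$ and an auxiliary exponent $α'∈(0,α)$ small enough to lie below the parabolic Hölder exponent $γ$ provided by Lemma~\ref{lem:ex.estimates.parab}\,\ref{lem:exestimparab:uholder} for the data in play; for $T>0$ to be chosen later (in terms of $L,K,D,α$) and a suitable $M>\norm[C^{α}(\Ombar)]{u_0}$, work in the closed convex set
\[
X_T:=\set{\utilde∈C^{α',α'/2}(\Ombar\times[0,T]) : \utilde(\cdot,0)=u_0,\ 0\leq\utilde\leq K,\ \norm[C^{α',α'/2}]{\utilde}\leq M}.
\]

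For $\utilde\in X_T$, Lemma~\ref{lem:ex.estimates.parab}\,\ref{lem:exestimparab:vholder} yields a classical solution $v∈C^{2+α',1+α'/2}(\Ombar\times(0,T])$ of $v_t=Δv-\utilde v$, $\delny v=0$, $v(\cdot,0)=v_0$; comparing against the sub- and supersolutions $\tfrac1Le^{-Kt}$ and $L$ gives $\tfrac1{2L}\leq v\leq L$ on $\Ombar\times[0,T]$ once $T$ is small, and Lemma~\ref{lem:ex.estimates.parab}\,\ref{lem:exestimparab:nav} (with $q=∞$ and some $r>N$) bounds $\na v$ in $\Liom$. Setting $A:=D(\utilde)$ and $g:=\tfrac{\utilde}{v}\na v$ (parabolically Hölder, with $g\cdotν=0$ on $\dOm$ because $\delny v=0$), parts~\ref{lem:exestimparab:ulinfty} and~\ref{lem:exestimparab:uholder} of Lemma~\ref{lem:ex.estimates.parab} deliver a solution of
\[
u_t=\na\cdot(A\na u-g),\qquad\delny u=0,\qquad u(\cdot,0)=u_0,
\]
with uniform $\Liom$- and $C^{γ,γ/2}$-bounds. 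Define $F(\utilde):=u$; since $u(\cdot,0)=u_0$ and the relevant constants depend continuously on $T$, the map $F$ sends $X_T$ into itself for $T$ small, is continuous by continuous dependence of both subproblems on their data, and has relatively compact image in $C^{α',α'/2}$ via the compact embedding $C^{γ,γ/2}\cptembeddedinto C^{α',α'/2}$ (valid because $γ>α'$). Schauder's fixed-point theorem then yields $u=F(u)$, and the $C^{2,1}$-regularity is upgraded by the last sentence of Lemma~\ref{lem:ex.estimates.parab}\,\ref{lem:exestimparab:uholder}.

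The main obstacle is to guarantee \emph{nonnegativity} of $u$ within the iteration, since the linear equation $u_t=\na\cdot(A\na u-g)$ with $g$ a frozen vector field does not in general preserve sign. I would circumvent this by iterating instead $u_t=\na\cdot(A\na u-u\na\log v)$ (which coincides with the target equation at the fixed point) using standard theory for divergence-form linear parabolic equations with bounded first-order drift $-\na\log v$ in place of Lemma~\ref{lem:ex.estimates.parab}\,\ref{lem:exestimparab:ulinfty},\ref{lem:exestimparab:uholder}. Testing this with $-u^-$, using $\na u^-\cdot\na u=-|\na u^-|^2$ and $u\cdot u^-=-(u^-)^2$, and integrating by parts (boundary contributions vanish thanks to $\delny u=0$ and $\delny v=0$) yields
\[
\tfrac12\ddt\io(u^-)^2+\io A|\na u^-|^2=-\tfrac12\io(u^-)^2Δ\log v,
\]
so that the $\Liom$-bound on $Δ\log v$ (from $v∈C^{2+α'}$ and $v\geq\tfrac1{2L}$), combined with Grönwall's lemma and $(u_0)^-\equiv0$, forces $u^-\equiv0$ and hence $u\geq0$.
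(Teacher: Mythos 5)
Your overall architecture — decoupling \eqref{sys} into two linear subproblems (solve $v$ for a given $\hat u$, then solve $u$), deriving uniform Hölder bounds via Lemma~\ref{lem:ex.estimates.parab}, and closing with Schauder's fixed-point theorem on a Hölder ball — is the same as the paper's, which works with $S$ in \eqref{def:S} in the ambient space $C^0(\Ombar\times[0,T])$ and gets compactness from Arzel\`a--Ascoli. The genuine and valuable difference is your treatment of nonnegativity. You are right that the frozen-forcing equation $u_t=\na\cdot(A\na u-g)$ does not preserve sign: the paper only verifies $\normm{\Liom}{\Phi(\hat u)(\cdot,t)}\leq\norm[\Liom]{u_0}+c_3t^{\gamma/2}\leq R$, which together with $\Phi(\hat u)(\cdot,0)=u_0$ gives $\Phi(\hat u)\geq u_0-c_3t^{\gamma/2}$, possibly negative, and so does not quite establish $\Phi(S)\subset S$ since $S$ requires $\hat u\geq 0$ (one can repair the paper's version by extending $D$ continuously to $(-\infty,0)$, dropping the sign constraint from $S$, and proving $u\geq 0$ only at the fixed point, where $g=\frac uv\na v$ vanishes with $u$). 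Your fix — iterating the drift form $u_t=\na\cdot(A\na u-u\na\log v)$ and testing with $-u^-$ — is a clean alternative, at the cost of needing drift analogues of parts~\ref{lem:exestimparab:ulinfty} and \ref{lem:exestimparab:uholder}, as you flag.

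One caveat in the $u^-$ computation: it relies on an $\Liom$-bound for $\Delta\log v$ all the way down to $t=0$, but Lemma~\ref{lem:ex.estimates.parab}\,\ref{lem:exestimparab:vholder} only gives $v\in C^{2+\alpha',1+\alpha'/2}(\Ombar\times(0,T])$, and for $v_0$ merely in $W^{1,\infty}(\Om)$ one expects $\norm[\Liom]{\Delta v(\cdot,t)}$ to degenerate as $t\downto 0$. It is cleaner to stop one integration by parts earlier: from $-\io u\na u^-\cdot\na\log v=\io u^-\na u^-\cdot\na\log v$, Young's inequality absorbs $\io u^-\lvert\na u^-\rvert$ into $\io A\lvert\na u^-\rvert^2$ using $A\geq\inf D>0$, and the resulting Gr\"onwall constant involves only $\norm[\Liom]{\na v/v}^2$, which is bounded uniformly on $[0,T]$ by part~\ref{lem:exestimparab:nav} combined with your pointwise lower bound on $v$.
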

\begin{proof}
 We let $R:=L+1\geq \norm[\Liom]{u_0}+1$.
 For the choice of $r:=∞$ we obtain $c_1:=C_{ii}>0$ with properties as described in Lemma \ref{lem:ex.estimates.parab} \ref{lem:exestimparab:nav}, and thereupon invoking Lemma \ref{lem:ex.estimates.parab} \ref{lem:exestimparab:ulinfty} for parameters $T=1$, $δ_0=\inf_{s>0} D(s)$, $M=R$, $K=c_1 R (1+R) L^2 e^R$, we are given  $c_2:=C_{iii}>0$ as in \eqref{eq:uinfty:regestimate}. %, $\mathfrak{c}\mathfrak{C}$
 An application of Lemma \ref{lem:ex.estimates.parab} \ref{lem:exestimparab:uholder} for $T=1$, $δ_0=\inf_{s>0} D(s)$, $D_0=\sup_{0<s<R} D(s)$, $M=\max\set{c_1R(1+R)L^2 e^R, L}$ and $K=c_2$ provides us with $c_3:=C_{iv}>0$ and $\gamma\in(0,1)$ as in \eqref{reg:uholder:estimate}. 
 With these, we choose $T∈(0,1)$ such that $\norm[\Liom]{u_0}+c_3T^{\frac\gamma2}<R$ and introduce 
 \begin{equation}\label{def:S}
  S:=\set{\uhat\in C^0(\Ombar\times[0,T]); 0\leq \uhat\leq R, u(\cdot,0)=u_0, \normm{C^{γ,\frac{γ}2}(\Ombar\times[0,T])}{\uhat}\leq c_3}\subset C^0(\Ombar\times[0,T]).
 \end{equation}

 For any $\uhat\in S$ we define $\uhat(t):=u(T)$ for $t\in(T,1]$ and note that the solution $v$ of 
\begin{equation}\label{eq:locex:v}
 v_t=Δv-\uhat v \quad \text{in } Ω\times(0,1), \quad \delny v\amrand=0,\quad v(\cdot ,0)=v_0 \text{ in } \Om,
\end{equation}
satisfies 
\begin{equation}\label{eq:lowerbd_v}
 \norm[\Liom]{v_0}\geq v(\cdot ,t)\geq \kl{\inf v_0}e^{-R}\geq \frac1 L e^{-R}
\end{equation}
in $Ω$ for all $t\in [0,1]$ and, by definition of $c_1$, $\norm[\Liom]{∇v(\cdot ,t)}\leq c_1(1+R)\norm[\Wom1{∞}]{v_0}\leq c_1(1+R)L$.

We let $u$ be the solution of 
\[
 u_t=\nabla\cdot\kl{D(\uhat)\na u - \frac{\uhat}v\na v} \text{ in } \Om\times(0,1), \quad \delny u\amrand=0, \quad u(\cdot,0)=u_0 \text{ in } \Om.
\]

Then by definition of $c_2$ and $c_3$ (with $g=\frac{\uhat}v∇v$ %(here we need the continuity assertion on the gradient of $v$ from Lemma \ref{lem:ex.estimates.parab} \ref{lem:exestimparab:vholder}, so that we know about $g\cdot ν=0$ on $\dOm$)
and $A=D\circ\uhat$ in \eqref{eq:estlemma:u}), $\norm[\Li(Ω\times(0,1))]{u}\leq c_2$ and $\normm{C^{γ,\frac{γ}2}(\Ombar\times[0,1])}{u}\leq c_3$. 
Hence if we define $Φ(\uhat):=u\restr{Ω\times(0,T)}$, we have $\norm[\Li(Ω)]{Φ(\uhat)(t)}\leq \norm[\Liom]{u_0}+c_3t^{\frac{γ}2}\leq R$ for every $t\in(0,T)$ and every $\uhat\in S$, and thus $Φ$ is a function mapping $S$ into itself, where $S$ is a closed convex set in $C^0(\Ombar\times[0,T])$. Moreover, $Φ\colon S\to S$ is continuous: 
We let $\ubar\in S$ and $\uhat^k\in S$ for all $k\in ℕ$ such that $\uhat^k\to \ubar$ in $C^0(\Ombar\times[0,T])$. Then, with respect to $\norm[\Li(\Om\times(0,T))]{\cdot}$ and with respect to the weak-*-topology of $\LT{∞}{\Wom1{∞}}$, the solutions $v^k$ of \eqref{eq:locex:v} with $\uhat$ replaced by $\uhat^k$ converge to $\vbar$ solving \eqref{eq:locex:v} with $\ubar$ instead of $\uhat$: 
Assuming on the contrary that there were a sequence $(k_l)_{l\inℕ}$ such that for each subsequence $(k_{l_m})_{m\inℕ}$ therof the sequence $(v^{k_{l_m}})_{m\inℕ}$ did not converge in the indicated topologies, from the uniform bounds on 
$\normm{C^{γ,\frac{γ}2}(\Ombar\times[0,T])}{v^{k_l}}$ and on $\norm[\LT{∞}{\Liom}]{∇v^{k_l}}$ asserted by Lemma \ref{lem:ex.estimates.parab} \ref{lem:exestimparab:vholder} and Lemma \ref{lem:ex.estimates.parab} \ref{lem:exestimparab:nav}, respectively, we could conclude the existence of some subsequence $(v^{k_{l_n}})_{n\in ℕ}$ being uniformly convergent in $Ω\times[0,T]$ and weakly-$^\ast$-convergent in $\LT{∞}{\Wom1{∞}}$. By passing to the limit in the weak formulation in the equations of the form \eqref{eq:locex:v} satisfied by $v^{k_l}$, the limit can easily be seen to coincide with the unique weak solution $\vbar$ of \eqref{eq:locex:v} with $\ubar$ replacing $\uhat$, contradicting the choice of $(v^{k_l})_{l\inℕ}$. We observe that hence and by \eqref{eq:lowerbd_v} $\frac{u^k}{v^k}∇v^k\weakstarto \frac{\ubar}{\vbar}∇\vbar$ in $\LT{∞}{\Lom{∞}}$. 
Similarly taking into account bounds on $\normm{C^{γ,\frac{γ}2}(\Ombar\times[0,T])}{Φ(\uhat^k)}$ and $\norm[L^2(Ω\times(0,T))]{∇Φ(\uhat^k)}$ as obtained from Lemma \ref{lem:ex.estimates.parab} \ref{lem:exestimparab:uholder} and \ref{lem:ex.estimates.parab} \ref{lem:exestimparab:ulinfty} and again employing the weak formulation of the equations defining $Φ(\uhat^k)$ and uniqueness of the solution $Φ(\ubar)=u$ of $u_t=∇\cdot (D(\ubar)∇u-\frac{\ubar}{\vbar}∇\vbar)$, $\delny u\amrand=0$, $u(\cdot ,0)=u_0$, we finally see that $Φ(\uhat^k)→Φ(\ubar)$ in $C^0(\Ombar\times[0,T])$.

We note that $S⊂C^0(\Ombar\times[0,T])$ is a closed bounded convex set and $Φ(S)$ is relatively compact in $C^0(\Ombar\times[0,T])$, owing to the uniform Hölder bound $c_3$ and Arzel\`a-Ascoli's theorem, so that we can apply Schauder's fixed point theorem to find $u\in S$ such that $Φ(u)=u$. 
Due to the regularity assertions in Lemma \ref{lem:ex.estimates.parab} \ref{lem:exestimparab:uholder} even $u\in C^{2,1}(\Ombar\times(0,T])$; also the corresponding solution $v$ of the second equation belongs to this space by \ref{lem:ex.estimates.parab} \ref{lem:exestimparab:vholder}. 
\end{proof}

\begin{lemma}\label{lem:cancontinue}
Let $T>0$. %Proof will become shorter if $[0,T]$ is used instead of $(0,T)$. But then it doesn't help with continuation... :-) 
 If on $\Omega\times(0,T)$ there is a solution $(u,v)$ to \eqref{sys} such that 
\[
 \sup_{t\in(0,T)} \norm[\Lom {∞}]{u(\cdot ,t)}<\infty, 
\]
then there is $\Ttilde>T$ such that there is a solution to \eqref{sys} in $\Omega\times(0,\Ttilde)$ which on $\Omega\times(0,T)$ coincides with $(u,v)$.
\end{lemma}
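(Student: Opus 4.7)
My plan is to apply Lemma \ref{lem:locex_first} from a starting time $t_0 \in (0, T)$ sufficiently close to $T$, using $(u(\cdot, t_0), v(\cdot, t_0))$ as new initial data: since the local existence time provided there depends only on a bound $L$ for the norms of the data and a positive lower bound of $v_0$, it is enough to obtain \emph{uniform} estimates (in $t_0$) of the form $\norm[C^{γ}(\Ombar)]{u(\cdot, t_0)} \leq L$, $\norm[\Wom1{∞}]{v(\cdot, t_0)} \leq L$ and $\inf_\Om v(\cdot, t_0) \geq 1/L$. Given such $L$, the existence time $τ$ from Lemma \ref{lem:locex_first} is independent of $t_0$, so picking $t_0 \in (T - τ, T)$ immediately yields an extension past $T$.

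To obtain these bounds I exploit $M := \norm[\Li(\Om \times (0, T))]{u} < ∞$. A comparison argument on the $v$-equation (using $v_t \geq Δv - M v$) gives $v(x, t) \geq (\inf_\Om v_0) e^{-MT} =: c_0 > 0$ on $\Ombar \times [0, T]$, while $v \leq \norm[\Liom]{v_0}$ follows from the maximum principle. Lemma \ref{lem:ex.estimates.parab} \ref{lem:exestimparab:nav} with $q = r = ∞$ then supplies a uniform bound on $\norm[\Liom]{\na v(\cdot, t)}$ on $[0, T]$, so that the drift $\frac{u}{v}\na v$ is uniformly bounded on $\Ombar \times (0, T)$ and satisfies the required boundary condition. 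With $A := D(u)$ and $δ_0 := \min_{s \in [0, M]} D(s) > 0$ (strictly positive by $D \in \Cdm^+$) together with $D_0 := \max_{s \in [0, M]} D(s)$, Lemma \ref{lem:ex.estimates.parab} \ref{lem:exestimparab:uholder} furnishes a uniform Hölder estimate $\normm{C^{γ,γ/2}(\Ombar \times [0, T])}{u} \leq C_1$ for some $γ \in (0, 1)$; in particular $\norm[C^{γ}(\Ombar)]{u(\cdot, t_0)} \leq C_1$ for every $t_0 \in (0, T)$.

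Choosing $L > \max\set{C_1, \sup_{t \in [0, T]} \norm[\Wom1{∞}]{v(\cdot, t)}, 1/c_0}$ with associated existence time $τ > 0$, I pick $t_0 \in (T - τ/2, T)$ and let $(\uhat, \widehat{v})$ be the classical solution of \eqref{sys} on $\Om \times [t_0, t_0 + τ]$ with initial data $(u(\cdot, t_0), v(\cdot, t_0))$ provided by Lemma \ref{lem:locex_first}. Setting $\Ttilde := t_0 + τ > T$ and defining $(\utilde, \vtilde) := (u, v)$ on $[0, t_0]$ and $:= (\uhat, \widehat{v})$ on $[t_0, \Ttilde]$ yields the desired extension, provided $(\uhat, \widehat{v}) = (u, v)$ on $\Om \times [t_0, T]$. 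The main obstacle — and the only non-routine step — is exactly this uniqueness statement for classical solutions of \eqref{sys} sharing initial data on an interval where $v$ is bounded below and $u$, $\na v$ are bounded; I would derive it by a standard Gronwall estimate for $\norm[\Lom2]{u_1 - u_2}^2 + \norm[\Wom12]{v_1 - v_2}^2$, testing the equation for $u_1 - u_2$ with $u_1 - u_2$ and the equation for $v_1 - v_2$ with $v_1 - v_2$ and $-Δ(v_1 - v_2)$, exploiting the $\Li$-bounds above and the positivity $v_i \geq c_0$ to control all cross terms. Once this uniqueness is in hand, $(\utilde, \vtilde)$ coincides with $(\uhat, \widehat{v})$ on the open interval $(t_0, \Ttilde) \ni T$, so the $C^{2,1}$-regularity at $t = T$ is inherited and the glued pair is a classical solution on $\Om \times (0, \Ttilde)$ which agrees with $(u, v)$ on $\Om \times (0, T)$.
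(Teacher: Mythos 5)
Your strategy diverges from the paper's in one important respect, and the divergence creates a gap. The paper restarts the solution \emph{exactly} at $t=T$: it uses the uniform Hölder bounds on $u$ and the $W^{1,\infty}$ bound on $v$ to show that the limits $\utilde_0:=\lim_{t\upto T}u(\cdot,t)$ and $\vtilde_0:=\lim_{t\upto T}v(\cdot,t)$ exist, belong to the right function classes, and then applies Lemma~\ref{lem:locex_first} from these limiting data and glues the two solutions at the single time $T$. Since there is no overlap, no uniqueness statement is needed; the concatenated pair is first a weak solution on $\Om\times(0,T+\tau)$ and is then promoted to a classical solution via Lemma~\ref{lem:ex.estimates.parab} parts \ref{lem:exestimparab:uholder} and \ref{lem:exestimparab:vholder}. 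You instead restart from some $t_0<T$, which forces you to prove that the new solution and the original one agree on the overlap $[t_0,T]$ — precisely the uniqueness question that the paper's construction is designed to sidestep. Note that Lemma~\ref{lem:locex_first} is proved via Schauder's fixed point theorem and hence furnishes \emph{no} uniqueness on its own, so the burden of your uniqueness step is entirely on you.

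The problem is that the Gronwall argument you sketch is not ``standard'' in the quasilinear setting and, as stated, does not close. When you test the difference of the $u$-equations with $U:=u_1-u_2$, the diffusive contribution is
\[
-\io\bigl(D(u_1)\na u_1-D(u_2)\na u_2\bigr)\cdot\na U
=-\io D(u_1)|\na U|^2-\io\bigl(D(u_1)-D(u_2)\bigr)\na u_2\cdot\na U,
\]
and absorbing the second integral into $-\io D(u_1)|\na U|^2$ and $\io U^2$ via Young's inequality requires a pointwise bound on $\na u_2$. You list the ingredients you plan to use — boundedness of $u$, of $\na v$, and the lower bound on $v$ — but a uniform bound on $\na u$ is not among them, and it is not an immediate consequence of those. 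It can in fact be obtained for the \emph{original} solution on $[t_0,T]$ (by bootstrapping Lemma~\ref{lem:ex.estimates.parab} \ref{lem:exestimparab:vholder} and \ref{lem:exestimparab:uholder} to get $u\in C^{2,1}$ up to $T$, after first ensuring $\frac uv\na v$ is Hölder), and one must then assign that solution the role of $u_2$ in the split above, since the newly produced solution is only asserted to be $C^{2,1}$ for $t>t_0$ and its gradient is not a priori controlled near $t_0$. None of this is mentioned, and without it the uniqueness step — which you yourself identify as the crux — is unjustified. I would either supply that missing gradient bound and carry out the asymmetric Gronwall estimate explicitly, or simply adopt the paper's cleaner device of restarting at $t=T$, which makes the uniqueness question disappear.
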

\begin{proof}
Successive application of comparison arguments in \eqref{sys:v} and of Lemma \ref{lem:ex.estimates.parab} parts \ref{lem:exestimparab:nav},  \ref{lem:exestimparab:uholder} and \ref{lem:exestimparab:vholder} show the existence of $α>0$ and $M>0$ such that 
\begin{align*}
 &\inf_{\Omega\times(0,T)} v>\frac1M, & &\norm[\LT{∞}{\Wom1{∞}}]{v}\leq M,&
 &\normm{C^{α,\frac{α}2}(\Ombar\times[0,T])}{u}\leq M,  & &\normm{C^{α,\frac{α}2}(\Ombar\times[0,T])}{v}\leq M.
\end{align*}
 Due to the uniform continuity of $u$ and $v$,
\[
 \utilde_0(x):=\lim_{t\upto T} u(x,t), \qquad \vtilde_0(x):=\lim_{t\upto T}v(x,t), \qquad x\in \Ombar, 
\]
are well-defined and satisfy 
\(
 M\geq \vtilde_0 \geq \frac1M$ in  $\Ombar$ as well as $\norm[C^{α}(\Ombar)]{\utilde_0}\leq M$. 

Picking a sequence $(t_k)_{k∈ℕ}\upto T$ and referring to $\norm[\LT{∞}{\Wom1{∞}}]{v}\leq M$, we may conclude the existence of a subsequence $(t_{k_l})_{l∈ℕ}$ such that $∇v(\cdot ,t_{k_l})\weakstarto ∇\vtilde_0$ in $\Liom$ as $l→∞$, and thus infer $\norm[\Wom1{∞}]{\vtilde_0}\leq M$. According to Lemma \ref{lem:locex_first}, we can find $τ>0$ and $(\utilde,\vtilde)\in \kl{C^0(\Ombar\times[0,τ])\cap C^{2,1}(\Ombar\times(0,τ])}^2$ solving 
\begin{align*}
 \utilde_t&=\na \cdot\kl{D(\utilde)\na\utilde - \frac{\utilde}{\vtilde}\na \vtilde},\qquad  \vtilde_t= \Delta \vtilde - \utilde\vtilde\qquad \text{ in } \Omega\times(0,τ),\\ &\delny \utilde\amrand=\delny\vtilde\amrand=0,\;\; \utilde(\cdot,0)=\utilde_0,\; \vtilde(\cdot,0)=\vtilde_0.  
\end{align*}
Letting 
\[
 (\ubar,\vbar)(\cdot,t):=\begin{cases}(u,v)(\cdot,t),&t<T,\\(\utilde,\vtilde)(\cdot,t-T),&t\in[T,T+τ),\end{cases}
\]
we obtain a weak solution of \eqref{sys} in $\Om\times(0,T+τ)$, which by Lemma \ref{lem:ex.estimates.parab} parts \ref{lem:exestimparab:uholder} and \ref{lem:exestimparab:vholder} is classical. 
\end{proof}

%The following lemma or proof is more ugly than maybe expected; we have not proven uniqueness of the loc-in-time solutions and hence cannot use the usual ``continue from Tmax-\tau/2''-trick (which would avoid Zorn)
\begin{lemma}\label{lem:locex:and:continuation}
 Let $\alpha\in(0,1)$, $m\geq 1$, $δ>0$. 
 For every $D\in\Cdm$, $u_0\in C^{α}(\Ombar)$, $v_0 \in \Wom1{∞}$, $u_0\geq 0$, $v_0>0$ in $\Ombar$,  there is $\Tmax>0$ and $(u,v)\in(\usualspace)^2$ such that $(u,v)$ solves \eqref{sys} and 
\begin{equation}\label{eq:extcrit}
 \Tmax=∞ \quad \text{or }\quad \limsup_{t\upto\Tmax} \norm[\Lom \infty]{u(\cdot,t)}=\infty.
\end{equation}
Moreover, $u\geq 0$ and $0<v<\norm[\Lom{∞}]{v_0}$ throughout $\Om\times(0,\Tmax)$.
\end{lemma}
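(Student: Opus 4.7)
The plan is to iterate the short-time existence result of Lemma \ref{lem:locex_first} to build a solution on a maximal time interval, deduce the blow-up criterion \eqref{eq:extcrit} from Lemma \ref{lem:cancontinue}, and read off the stated bounds on $u$ and $v$ from parabolic comparison.

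\textbf{Local existence and maximal continuation.} Since $v_0\in \Wom{1}{\infty}$ is (Lipschitz and hence) continuous on the compact set $\Ombar$ and strictly positive there, one can choose $L>0$ so large that $\norm[C^\alpha(\Ombar)]{u_0}\leq L$, $\norm[\Wom{1}{\infty}]{v_0}\leq L$ and $v_0\geq 1/L$ pointwise in $\Ombar$. For $D\in\Cdm$ with $D(0)>0$ — the regime in which Lemma \ref{lem:locex_first} applies and in which the present lemma is invoked for Theorem \ref{thm:nondegenerate} — that lemma produces some $T_0>0$ together with a classical solution $(u,v)\in\kl{\usualspace[T_0]}^2$ of \eqref{sys}. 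Let $\Tmax$ denote the supremum of all $T>0$ for which there exists a classical solution of \eqref{sys} on $\Om\times(0,T)$. Iterative gluing — given a solution up to some $t_0\in(0,\Tmax)$, its values $(u(\cdot,t_0),v(\cdot,t_0))$ serve as admissible initial data to which Lemma \ref{lem:locex_first} reapplies, producing a further short-time piece that is then appended — yields $(u,v)\in(\usualspace)^2$ on $\Om\times(0,\Tmax)$. If we had $\Tmax<\infty$ together with $\sup_{t<\Tmax}\norm[\Liom]{u(\cdot,t)}<\infty$, then Lemma \ref{lem:cancontinue} would provide an extension past $\Tmax$, contradicting maximality; this proves the dichotomy \eqref{eq:extcrit}.

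\textbf{Sign and size bounds.} The non-negativity $u\geq 0$ is built into the Schauder scheme of Lemma \ref{lem:locex_first} (its invariant set $S$ consists of non-negative functions, so does the fixed point), and is passed to every extension. Given $u\geq 0$, the second equation becomes $v_t-\Delta v+uv=0$ with non-negative absorption, so the parabolic maximum principle gives $v\leq\norm[\Liom]{v_0}$, and, unless the solution is the trivial constant one, the strong maximum principle applied to $\norm[\Liom]{v_0}-v\geq 0$ forces strict inequality $v<\norm[\Liom]{v_0}$ in $\Om\times(0,\Tmax)$. For the lower bound, on any $[0,T]\subset[0,\Tmax)$ we have $M:=\norm[\Li(\Om\times(0,T))]{u}<\infty$ by the local regularity of $u$, so comparison of $v$ with the spatially constant sub-solution $t\mapsto\kl{\inf_{\Ombar}v_0}e^{-Mt}$ of the same equation yields $v\geq\kl{\inf_{\Ombar}v_0}e^{-Mt}>0$ on $\Ombar\times[0,T]$.

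\textbf{Main obstacle.} The substantive analytic content lies in Lemmas \ref{lem:locex_first} and \ref{lem:cancontinue}; in particular the circularity between needing $v>0$ to define $u/v$ and needing $u\geq 0$ to keep $v$ away from zero is resolved there, and so is the upgrade from an $L^\infty$ bound on $u$ to a classical continuation. The main obstacle in the present lemma is therefore bookkeeping rather than hard analysis: one has to verify that the short-time solutions produced by successive applications of the Schauder scheme in Lemma \ref{lem:locex_first} can be concatenated into a single member of $(\usualspace)^2$ on $\Om\times(0,\Tmax)$. This is handled by invoking the parabolic regularity of Lemma \ref{lem:ex.estimates.parab}~\ref{lem:exestimparab:uholder}, \ref{lem:exestimparab:vholder} on overlapping time windows together with uniqueness of classical solutions to the linear parabolic problems obtained after freezing the already-constructed coefficients.
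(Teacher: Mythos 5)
Your construction of the maximal existence interval differs from the paper's and, as written, has a gap. The paper equips the set $\calS$ of triples $(t,u,v)$ solving \eqref{sys} on $\Om\times(0,t)$ with a partial order $\preceq$ that encodes compatibility of the two solutions on the smaller time interval, checks that every chain has an upper bound, and extracts a maximal element by Zorn's lemma; the extensibility criterion \eqref{eq:extcrit} then drops out of Lemma~\ref{lem:cancontinue} applied to that maximal element.

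You instead define $\Tmax$ as the supremum of all $T$ for which \emph{some} classical solution exists on $\Om\times(0,T)$, and assert that iterative gluing of short-time pieces from Lemma~\ref{lem:locex_first} reaches $\Tmax$. This is not justified: Lemma~\ref{lem:locex_first} rests on Schauder's fixed point theorem and gives no uniqueness, so the supremum ranges over all possible branches of solution, while a countable gluing procedure follows only one branch and can terminate short of this supremum. Moreover, even along one branch the gluing may need to be continued transfinitely, since a bounded solution defined on the countable union $\bigcup_n(0,T_n)$ can again be extended by Lemma~\ref{lem:cancontinue}. Your closing appeal to ``uniqueness of classical solutions to the linear parabolic problems obtained after freezing the already-constructed coefficients'' does not repair this, because uniqueness for the frozen-coefficient subproblems says nothing about uniqueness for \eqref{sys} itself and hence cannot single out a canonical solution living on all of $(0,\Tmax)$.

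Two fixes are available. One is to use Zorn's lemma exactly as the paper does: compatibility along chains makes the union over a chain a well-defined element of $\calS$, and maximality replaces the supremum. The other is to redefine $\Tmax$ as the endpoint $T^*$ of a gluing process in which the time step of Lemma~\ref{lem:locex_first} is always taken as large as that lemma permits for the current data; then, if $T^*<\infty$ with $\sup_{t<T^*}\norm[\Liom]{u(\cdot,t)}<\infty$, the uniform Hölder, $W^{1,\infty}$ and lower bounds on $(u,v)$ furnished in the proof of Lemma~\ref{lem:cancontinue} would give a uniform positive lower bound on the time step, contradicting $T^*<\infty$. With either repair, the remainder of your argument goes through. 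Your explicit derivation of $u\ge 0$ and $0<v<\norm[\Liom]{v_0}$ by maximum-principle comparisons is correct and in fact more detailed than what the paper records, which leaves these to the construction in Lemma~\ref{lem:locex_first}.
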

\begin{proof}
We let $u_0\in C^{α}(\Ombar)$ and $v_0\in \Wom1{∞}$, define 
\[
 \calS=\set{(t,u,v);\, t\in(0,∞), u,v\in \usualspace \;\; (u,v) \text{ solves } \eqref{sys}}
\]
and introduce the order relation $\preceq$ given by 
\[
 (t_1,u_1,v_1)\preceq(t_2,u_2,v_2) :\iff t_1\leq t_2, u_2|_{(0,t_1)}=u_1, \, v_2|_{(0,t_1)}=v_1. 
\]
Every totally ordered set $M_I=\set{(t_i,u_i,v_i);\; i\in I}$ with arbitrary index set $I$ has an upper bound $(\sup_{i\in I} t_i,u,v)$, where $u(τ)=u_i(τ)$ if $τ\in(0,t_i)$ and $v$ is defined analogously. (This yields well-defined functions, since $u_{i_1}(τ)=u_{i_2}(τ)$ if $τ\in(0,t_{i_1})\cap (0,t_{i_2})$, because $M_I$ is totally ordered.) Moreover, $\calS$ is not empty, according to Lemma \ref{lem:locex_first}. 
By Zorn's lemma there is some maximal element $(\Tmax,u,v)\in \calS$. Assume that \mbox{$\limsup_{t\upto \Tmax} \norm[\Lom \infty]{u(\cdot,t)}<\infty$}. Then Lemma \ref{lem:cancontinue} immediately yields $\Ttilde>T$ such that $(\Ttilde,\utilde,\vtilde)\succeq (\Tmax,u,v)$, contradicting the maximality of $(\Tmax,u,v)$.
\end{proof}

\section{The nondegenerate case. Proof of Theorem \ref{thm:nondegenerate}}\label{sec:nondeg}
This section is devoted to the derivation of estimates for the solutions, so as to finally obtain their global existence by means of the extensibility criterion \eqref{eq:extcrit}. 

For some manipulations in \eqref{sys:u} it would be more convenient to deal with a nonsingular chemotaxis term of the form $∇\cdot (u∇w)$ instead of $∇\cdot (\frac{u}v∇v)$. For this purpose, we employ the following transformation and, given a solution $(u,v)\in(\usualspace)^2$ of \eqref{sys} for initial data $(u_0,v_0)$ as in \eqref{initcond}, let 
\begin{equation}\label{eq:def:w}
 w:=-\log \frac{v}{\norm[\Liom]{v_0}} \qquad \text{in } \Omega\times[0,\Tmax). 
\end{equation}
Then $w\geq 0$ in $\Omega\times(0,\Tmax)$ and $(u,w)\in(\usualspace)^2$ solves 
\begin{subequations}\label{eq:uw-system}
\begin{align}
  u_t&=∇\cdot (D(u)∇u+u∇w)&&\text{in } \Om\times(0,\Tmax)\label{uw:u}\\
  w_t&=Δw-|∇w|^2+u&&\text{in } \Om\times(0,\Tmax)\label{uw:w}\\
  &\delny u=0=\delny w&&\text{in } \dOm\times(0,\Tmax)\label{uw:rand}\\
  &u(\cdot,0)=u_0, \quad w(\cdot,0)=w_0:= -\log \frac{v_0}{\norm[\Liom]{v_0}}&&\text{in } \Om,\label{uw:init} 
\end{align}
\end{subequations}
where $(u_0,w_0)$ satisfy 
\begin{equation}\label{uw-init}
 u_0\geq 0,\quad w_0\geq 0,\quad w_0\in W^{1,\infty}(\Om), \quad u_0\in C^{α}(\Ombar) \text{ for some } α\in(0,1).
\end{equation}

Evidently, given $\norm[\Liom]{v_0}$ every solution $(u,w)$ to \eqref{eq:uw-system} yields a solution to \eqref{sys} via $v:=\norm[\Liom]{v_0}e^{-w}$. 

We will proceed in several steps, the first being the following simple observation that the bacterial mass is conserved throughout evolution: 
\begin{lemma}\label{lem:massconservation:u}
Let $T>0$, $\delta>0$, $m\inℝ$, $D\in \Cdm$ and let $u\in C^0(\Ombar\times[0,T))\cap C^{2,1}(\Ombar\times(0,T))$ solve  \eqref{sys:ubdry}, \eqref{sys:u} with some $v\in C^{2,1}(\Ombar\times(0,T))$ or \eqref{uw:u}, \eqref{uw:rand} with some $w\in C^{2,1}(\Ombar\times(0,T))$. Then, with $u_0:=u(\cdot,0)$,    
 \[
  \io u(\cdot ,t)=\io u_0\qquad \text{for every } t\in (0,T).
 \]
\end{lemma}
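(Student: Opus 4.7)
The plan is to integrate the PDE for $u$ over $\Om$, apply the divergence theorem, and observe that the boundary contributions vanish identically, yielding $\ddt \io u(\cdot,t) = 0$; the claim then follows by integration in $t$ and the continuity of $u$ up to $t=0$. Since $u \in C^0(\Ombar \times [0,T)) \cap C^{2,1}(\Ombar \times (0,T))$, we have $u_t \in C(\Ombar\times(0,T))$, so differentiation under the integral sign is justified and $\ddt \io u(\cdot,t) = \io u_t(\cdot,t)$ for every $t\in(0,T)$.

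Next I would substitute the equation satisfied by $u_t$ and apply the divergence theorem. In the first case, where $u$ solves \eqref{sys:u} together with \eqref{sys:ubdry}, one obtains
\[
 \io u_t = \io \na\cdot\kl{D(u)\na u - \tfrac{u}{v}\na v} = \int_{\dOm} \kl{D(u)\delny u - \tfrac{u}{v}\delny v}\dsigma.
\]
The first boundary term vanishes by \eqref{sys:ubdry}; for the second, the statement is used within the context of the full system \eqref{sys}, so $v$ satisfies \eqref{sys:vbdry}, i.e.\ $\delny v = 0$ on $\dOm\times(0,T)$, and the term vanishes as well. In the second case, where $u$ solves \eqref{uw:u} together with \eqref{uw:rand}, the analogous calculation gives
\[
 \io u_t = \int_{\dOm}\kl{D(u)\delny u + u\,\delny w}\dsigma,
\]
and both summands vanish by \eqref{uw:rand}. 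In either situation, $\ddt \io u(\cdot,t) = 0$ throughout $(0,T)$.

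Consequently $t\mapsto \io u(\cdot,t)$ is constant on $(0,T)$. The uniform continuity of $u$ on $\Ombar\times[0,\tau]$ for each $\tau \in (0,T)$ ensures that $\io u(\cdot,t)\to\io u_0$ as $t\downto 0$, so the constant value must coincide with $\io u_0$, proving the lemma. There is no substantive obstacle in this argument; the only point meriting care is the identification of the second boundary contribution in the \eqref{sys:u}-case, which requires reading the hypothesis as placing $u$ inside a solution of the full system \eqref{sys} so that $\delny v = 0$ is indeed available.
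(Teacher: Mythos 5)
Your proof is correct and takes the same approach as the paper's one-line proof (``integration over $\Om$''). You also rightly flag a small imprecision in the hypotheses: in the \eqref{sys:u}-case the lemma only lists \eqref{sys:ubdry}, i.e.\ $\delny u = 0$, so the vanishing of the boundary term $\int_{\dOm}\frac uv \delny v$ is not literally guaranteed by the stated assumptions; in contrast, \eqref{uw:rand} includes $\delny w = 0$ and closes that gap in the $(u,w)$-case automatically. In every application in the paper $v$ does satisfy \eqref{sys:vbdry}, so nothing is broken, but your observation is a fair one.
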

\begin{proof} 
This is an immediate consequence of \eqref{sys:u} or \eqref{uw:u}, obtained upon integration over $\Om$.  
\end{proof}

In \eqref{eq:uw-system}, a spatio-temporal $L^2$-bound for $∇w$ can be inferred rather directly:
\begin{lemma}\label{lem:intntionaw:2}
Let $m\inℝ$, $δ>0$, $T>0$ and $(u,w)\in(\usualspace[T])^2$ be a solution to \eqref{eq:uw-system} for any $D\in\Cdm$. Then, with $u_0:=u(\cdot,0)$, $w_0:=w(\cdot,0)$,  
 \[
  \intnt\io|∇w|^2\leq \io w_0+t\io u_0\qquad \text{for every } t\in(0,T).
 \]
\end{lemma}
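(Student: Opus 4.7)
The plan is to integrate the second equation in \eqref{eq:uw-system} over $\Omega$ and then in time, using the Neumann boundary condition to kill the Laplacian term, and invoking Lemma \ref{lem:massconservation:u} to handle the production term.

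More concretely, I would start from \eqref{uw:w}, integrate over $\Omega$ at fixed $t$, and obtain
\[
\ddt\io w = \io \Delta w - \io |\nabla w|^2 + \io u \qquad \text{for all } t\in(0,T).
\]
By the Neumann boundary condition \eqref{uw:rand} on $w$, the divergence theorem yields $\io \Delta w = 0$. So the identity simplifies to
\[
\ddt\io w + \io |\nabla w|^2 = \io u \qquad \text{for all } t\in(0,T).
\]

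Next I would integrate this in time from $0$ to $t$. Because $(u,w)$ is a classical solution in $\usualspace[T]$, mass conservation from Lemma \ref{lem:massconservation:u} applies and gives $\io u(\cdot,s)=\io u_0$ for every $s\in(0,T)$, so $\intnt\io u = t\io u_0$. This produces
\[
\io w(\cdot,t) - \io w_0 + \intnt\io |\nabla w|^2 = t\io u_0.
\]

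Finally, the definition $w=-\log\frac{v}{\norm[\Liom]{v_0}}$ together with the maximum principle bound $0<v\leq \norm[\Liom]{v_0}$ (noted in Lemma \ref{lem:locex:and:continuation}) gives $w(\cdot,t)\geq 0$ throughout $\Om\times(0,T)$, so $\io w(\cdot,t)\geq 0$ can simply be dropped from the left-hand side, yielding the stated inequality. There is really no main obstacle here; the only thing to double-check is that the regularity $(u,w)\in (\usualspace[T])^2$ suffices to justify differentiating $\io w$ in $t$ and integrating the resulting ODE identity in time, which it does.
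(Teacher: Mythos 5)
Your proof is correct and follows exactly the same route as the paper: integrate \eqref{uw:w} over $\Om$, integrate in time, apply Lemma \ref{lem:massconservation:u}, and discard the nonnegative term $\io w(\cdot,t)$. The paper merely compresses these steps into a single sentence.
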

\begin{proof}
 In order to see this, it is sufficient to integrate the second equation of \eqref{eq:uw-system} and take into account Lemma \ref{lem:massconservation:u}.
\end{proof}

This bound can be transformed into a first information on derivatives of $u$: 
\begin{lemma}\label{lem:ionaumme}
 Let $m>1$, $δ>0$. For any $K>0$ there is $C>0$ such that for any $D\in \Cdm$ any solution $(u,w)$ of \eqref{eq:uw-system} emanating from initial data $(u_0,w_0)$ as in \eqref{uw-init} with $\norm[\Lmme]{u_0}\leq K$, $\norm[\Lom1]{w_0}\leq K$ obeys the estimates 
\[
 \intnt\io u^{2m-4}|∇u|^2 \leq C(1+t) \fatntmax 
\]
and 
\[
 \intnt\io D(u)u^{m-3}|\na u|^2\leq C(1+t) \fatntmax  
\]
as well as 
\label{lem:intumme}
\begin{equation}\label{eq:intumme}
 \io u^{m-1}(\cdot,t)\leq C(1+t) \fatntmax.
\end{equation}
\end{lemma}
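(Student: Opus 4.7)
My plan is to differentiate $\io u^{m-1}$ along the flow by testing the $u$-equation \eqref{uw:u} against $u^{m-2}$. In the nondegenerate setting $D(0)>0$, the strong maximum principle forces $u>0$ on $\Omega\times(0,\Tmax)$ as soon as $u_0\not\equiv 0$ (the $u_0\equiv 0$ case being trivial), so $u^{m-2}$ is a smooth admissible test function. Integration by parts and the no-flux boundary condition yield the pivotal identity
\begin{equation*}
\frac{1}{m-1}\frac{d}{dt}\io u^{m-1}+(m-2)\io D(u)u^{m-3}|\nabla u|^2=-(m-2)\io u^{m-2}\nabla u\cdot\nabla w.
\end{equation*}
The strategy is to absorb the mixed gradient term on the right into the dissipation on the left. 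Young's inequality with parameter $\eta>0$, combined with the structural bound $D(u)\geq\delta u^{m-1}$ (which gives $u^{2m-4}|\nabla u|^2\leq \delta^{-1}D(u)u^{m-3}|\nabla u|^2$), yields
\begin{equation*}
\bigl|u^{m-2}\nabla u\cdot\nabla w\bigr|\leq \frac{\eta}{2\delta}\,D(u)u^{m-3}|\nabla u|^2+\frac{1}{2\eta}|\nabla w|^2.
\end{equation*}
After integrating in $t$, the $|\nabla w|^2$-contribution will be controlled by Lemma \ref{lem:intntionaw:2} ($\int_0^t\io|\nabla w|^2\leq C(1+t)$); the initial-data term $\io u_0^{m-1}$ is bounded via $\norm[\Lmme]{u_0}\leq K$, directly if $m\geq 2$ and via Jensen's inequality if $1<m<2$.

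The sign of the coefficient $(m-2)$ in front of the dissipation flips across $m=2$, so the implementation splits into two regimes. For $m>2$, the dissipation sits on the left with the correct sign, and tuning $\eta$ so that $\eta/(2\delta)<m-2$, integration in $t$ simultaneously delivers \eqref{eq:intumme} and the bound on $\int_0^t\io D(u)u^{m-3}|\nabla u|^2$. For $1<m<2$, \eqref{eq:intumme} is already immediate from mass conservation (Lemma \ref{lem:massconservation:u}) and Jensen's inequality, yielding the $t$-independent bound $\io u^{m-1}(\cdot,t)\leq |\Omega|^{2-m}(\io u_0)^{m-1}$; the identity, integrated in $t$ and rearranged as
\begin{equation*}
(2-m)\int_0^t\io D(u)u^{m-3}|\nabla u|^2=\tfrac{1}{m-1}\bigl[\io u_0^{m-1}-\io u^{m-1}(\cdot,t)\bigr]-(2-m)\int_0^t\io u^{m-2}\nabla u\cdot\nabla w,
\end{equation*}
combined with the Young estimate above produces the dissipation estimate. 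The first claimed inequality, on $\int_0^t\io u^{2m-4}|\nabla u|^2$, follows immediately from the second by $D(u)u^{m-3}\geq \delta u^{2m-4}$.

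The main obstacle is the simultaneous tuning of the Young parameter $\eta$: it must be small enough to permit absorption of the cross term into the (sign-dependent) dissipation, while the resulting $\frac{1}{2\eta}\int|\nabla w|^2$ must still be controllable through Lemma \ref{lem:intntionaw:2}; this is precisely where the porous-medium lower bound $D\geq \delta u^{m-1}$ is indispensable, as it bridges the two weighted gradient norms $\int u^{2m-4}|\nabla u|^2$ and $\int D(u)u^{m-3}|\nabla u|^2$. The borderline case $m=2$ degenerates the identity to $0=0$ and requires a separate argument: \eqref{eq:intumme} is then just mass conservation, and the dissipation bounds are most easily obtained by testing against an auxiliary function such as $\log(u+1)$.
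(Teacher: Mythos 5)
Your proposal follows the paper's proof essentially step for step: test \eqref{uw:u} with $u^{m-2}$, split at $m=2$, absorb the cross-term into the dissipation via Young and the structural bound $D\geq\delta u^{m-1}$, invoke Lemma \ref{lem:intntionaw:2} for $\intnt\io|\nabla w|^2$, and use mass conservation (with Jensen's inequality when $1<m<2$) to control $\io u^{m-1}$. Two small points, however. First, your integrated identity for $1<m<2$ has the sign on the boundary terms reversed; the correct rearrangement reads
\begin{equation*}
 (2-m)\intnt\io D(u)u^{m-3}|\nabla u|^2=\frac{1}{m-1}\Bigl[\io u^{m-1}(\cdot,t)-\io u_0^{m-1}\Bigr]-(2-m)\intnt\io u^{m-2}\nabla u\cdot\nabla w.
\end{equation*}
This is harmless for the argument, because both $\io u^{m-1}(\cdot,t)$ (via Jensen and mass conservation when $m<2$) and $\io u_0^{m-1}$ (directly from the data) are controlled, and whichever appears with the unfavorable sign is simply dropped; but as written your derivation contains an algebraic slip. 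Relatedly, the condition ``tune $\eta$ so that $\eta/(2\delta)<m-2$'' for $m>2$ should read $\eta/(2\delta)<1$ once the $(m-2)$-prefactor is tracked consistently on both sides. Second, and more substantively, for $m=2$ your proposed multiplier $\log(u+1)$ produces the dissipation term $\io\frac{D(u)}{u+1}|\nabla u|^2$, whose integrand degenerates as $u\downto 0$ for general $D\in\Cdm$ (where $D(0)$ may vanish), so it does not directly yield either $\intnt\io|\nabla u|^2$ or $\intnt\io\frac{D(u)}{u}|\nabla u|^2$. The paper instead differentiates $\io u\log u$, i.e.\ uses the multiplier $\log u$, which produces exactly $\io\frac{D(u)}{u}|\nabla u|^2\geq\delta\io|\nabla u|^2$ and closes uniformly. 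Your category of auxiliary function is right, but the specific example would need either the extra hypothesis $D(0)>0$ or an additional small-$u$ argument that you do not supply.
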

\begin{proof}
Due to \eqref{uw:u}, on $(0,\Tmax)$ we have 
 \begin{align}\label{eq:ddtumme}
  \ddt \io u^{m-1} &= (m-1)\io u^{m-2}∇\cdot (D(u)∇u+u∇w)\nn\\
&=(m-1)(2-m)\io D(u)u^{m-3}|∇u|^2+(m-1)(2-m)\io u^{m-2}∇u\cdot∇w.
 \end{align}
 We note that by Young's inequality 
\begin{equation}\label{eq:lem.ionaumme:young}
 \left\lvert\io u^{m-2}∇u\cdot ∇w\right\rvert \leq \frac{δ}3\io u^{2m-4}|∇u|^2+\frac3{4δ}\io |∇w|^2\quad \text{ on } (0,\Tmax).
\end{equation}
The sign of $(m-1)(2-m)$ in \eqref{eq:ddtumme} depends on the size of $m$ and we therefore distinguish the following cases:

If $m\in(1,2)$, \eqref{eq:ddtumme} together with \eqref{eq:lem.ionaumme:young} and Lemma \ref{lem:intntionaw:2} %with $δ$ as in \eqref{cond:D} 
yields 
\begin{align}\label{eq:intntionaumme:msmall}
 \frac13&\intnt\io D(u)u^{m-3}|\na u|^2 + \frac{δ}3\intnt\io u^{2m-4}|∇u|^2 \nn\\
&\leq\frac{1}{(m-1)(2-m)}\io u^{m-1}(\cdot,t)-\frac1{(m-1)(2-m)} \io u_0+\frac3{4δ} \intnt\io |∇w|^2\nn\\
 &\leq \frac1{(m-1)(2-m)}|\Om|^{\frac{m-2}{m-1}} \kl{\io u_0}^{\frac1{m-1}}+\frac3{4δ}\kl{\io w_0+t\io u_0} \text{  for any } t\in(0,\Tmax).%=:\frac{δ}2(c_1+c_2t).
\end{align}
If $m>2$, \eqref{eq:ddtumme} and \eqref{eq:lem.ionaumme:young} can be combined to give 
\begin{align*}
 \frac{1}{(m-1)(m-2)}&\io u^{m-1}(\cdot ,t)-\frac1{(m-1)(m-2)}\io u_0^{m-1} +\frac13 \intnt\io D(u)u^{m-3}|\na u|^2\\&+ \frac{δ}3 \intnt\io u^{2m-4}|∇u|^2
 \leq \frac3{4δ} \intnt\io |∇w|^2\quad \text{ for any } t\in(0,\Tmax),
\end{align*}
which allows for a similarly obvious definition of $C$ as \eqref{eq:intntionaumme:msmall}. 
This inequality also entails \eqref{eq:intumme} for $m>2$, the only case that does not immediately result from Lemma \ref{lem:massconservation:u}.

If $m=2$, apparently the consideration of $\ddt \io u^{m-1}=\ddt\io u=0$ does not help in achieving an estimate for $\intnt\io u^{2m-4}|\na u|^2=\intnt\io |\na u|^2$. From the analogously obtained 
\begin{align*}
 %\ddt \io u\log u&= \io \log u∇\cdot (D(u)∇u+u∇w) = - \io \frac{D(u)}{u}|∇u|^2 -\io ∇u\cdot ∇w
 \ddt \io u\log u + \frac13 \io \frac{D(u)}u |\na u|^2 + \frac{2δ}3\io |∇u|^2 \leq \frac{δ}3\io |∇u|^2 + \frac3{4δ}\io |∇w|^2 \text{ on } (0,\Tmax),
\end{align*}
however, we obtain the same form of estimates as in the other cases.
\end{proof}

For convenience let us recall those special cases of the \GNI\ we are going to use in the following: 

\begin{lemma}\label{lem:GNI}
%Let $\Omega\subsetℝ^N$ be a bounded smooth domain. 
\beenum
\item\label{GNI:firstderivative} Let $0<q\leq p\leq \frac{2N}{N-2}$ (or $0<q\leq p<\infty$ if $N=2$) and let $s>0$ and $\gamma>0$. 
 Then there is $c>0$ such that 
\[
 \norm[\Lom p]{u}^{γ}\leq c\norm[\Lom 2]{\na u}^{aγ} \norm[\Lom q]{u}^{(1-a)γ} + c\norm[\Lom s]{u}^{γ} \qquad \text{for all } u\in W^{1,2}(\Om)\cap \Lom q\cap \Lom s,
\]
where 
\[
 a=\frac{\frac1q-\frac1p}{\frac1q+\frac1n-\frac12}. 
\]
\item\label{GNI:secondderivative}
Let $p,q\in(1,∞)$ be such that $p(q-N)=q(2p-N)a$ for some $a\in [\frac12,1)$. %a=1 is possible, UNLESS p=N.
Then there is $c>0$ such that 
\[
 \norm[\Lom q]{\nabla v}^q \leq c\norm[\Lom p]{\Delta v}^{qa} \norm[\Liom]{v}^{q(1-a)}+c\norm[\Liom]{v}^q %\qquad \text{ for all } 
\]
for all $v\in \Wom2p\cap \Wom1q \cap \Liom$ with $\delny v=0$ on $\dOm$.
\end{enumerate}
\end{lemma}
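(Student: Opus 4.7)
The plan is to deduce both parts from the standard Gagliardo--Nirenberg interpolation inequality on smooth bounded domains together with, in the case of part ii), Neumann elliptic regularity; in each instance the only genuine checking required is that the exponent $a$ prescribed by dimensional scaling coincides with the algebraic formula stated.

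For part i), I would start from the bounded-domain version
\[
\norm[\Lom p]{u} \leq c_1 \norm[\Lom 2]{\na u}^{a}\norm[\Lom q]{u}^{1-a} + c_2\norm[\Lom s]{u},
\]
valid for any auxiliary exponent $s>0$ (the correction term $c_2\norm[\Lom s]{u}$ is what converts the homogeneous inequality on $ℝ^N$ into one on $\Omega$). The exponent $a$ is dictated by the scaling identity $\frac1p = a\kl{\frac12-\frac1N}+(1-a)\frac1q$; solving for $a$ produces exactly the quoted formula, and the hypothesis $0<q\leq p\leq \frac{2N}{N-2}$ (or $p<\infty$ if $N=2$) is precisely what forces $a\in[0,1]$. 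Raising the inequality to the power $\gamma>0$ and employing the elementary bound $(x+y)^\gamma\leq C_\gamma(x^\gamma+y^\gamma)$ for some $C_\gamma>0$ then yields the claim.

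For part ii), the idea is to first invoke a Gagliardo--Nirenberg interpolation controlling $\na v$ by $\na^2 v$ and $v$ in the sup norm, and then to replace $\na^2 v$ by $\Delta v$ using elliptic regularity. Concretely, on the smooth bounded domain $\Omega$, regularity theory for the Neumann Laplacian yields constants $c_3,c_4>0$ with
\[
\norm[\Lom p]{\na^2 v}\leq c_3\norm[\Lom p]{\Delta v}+c_4\norm[\Lom p]{v}
\]
for every $v\in \Wom2p$ satisfying $\delny v\amrand=0$, while the standard Gagliardo--Nirenberg inequality provides
\[
\norm[\Lom q]{\na v}\leq c_5\norm[\Lom p]{\na^2 v}^a\norm[\Liom]{v}^{1-a}+c_6\norm[\Liom]{v},
\]
where the scaling relation $\frac1q=\frac1N+a\kl{\frac1p-\frac2N}$ is equivalent to the identity $p(q-N)=q(2p-N)a$ stated in the lemma, and where $a\geq\frac12$ is the usual admissibility bound for interpolating first derivatives between second derivatives and $L^\infty$. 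Taking $q$-th powers, estimating $\norm[\Lom p]{v}\leq |\Omega|^{1/p}\norm[\Liom]{v}$, and invoking $(x+y)^q\leq 2^{q-1}(x^q+y^q)$ assembles all contributions into the announced form.

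I do not anticipate any substantial obstacle, since both statements are essentially bookkeeping rearrangements of classical inequalities; the only mildly delicate ingredient is the Neumann elliptic regularity estimate used in part ii), which is standard on smooth bounded domains for every $1<p<\infty$.
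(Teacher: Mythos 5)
Your proposal is correct and follows the same route as the paper: both parts are reduced to the classical Gagliardo--Nirenberg inequality, with part ii) additionally invoking $W^{2,p}$ Neumann elliptic regularity to replace $\na^2 v$ by $\Delta v$; the paper merely supplies the references (Nirenberg, Friedman, Li--Lankeit) rather than spelling out the scaling check. The one wrinkle worth flagging is that in part i) the paper also needs the regime $q<1$ (the exponents $\tfrac1{m-1}$, $\tfrac{2}{m+p-1}$ appearing later can fall below $1$), so it cites a version of the inequality explicitly allowing sub-unit exponents, which the generic ``standard'' formulation you invoke does not automatically cover.
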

\begin{proof}
 The \GNI\ can be found in \cite[p. 125]{nirenberg}, \cite[Thm. 10.1]{friedman} or in \cite[Lemma 2.3]{yan_joh} (where also the case of $p,q<1$ in \ref{GNI:firstderivative} is covered); replacing $D^2v$ by $\Delta v$ in the standard formulation of \ref{GNI:secondderivative} is possible by, e.g., \cite[Thm. 19.1]{friedman}. %Friedman hat in ii) für p=n nur a=1/2. 
\end{proof}

Aided by the \GNI, in the next step, as consequence of the estimates from Lemma \ref{lem:ionaumme} we shall acquire the bound \eqref{eq:ulpr}, which will be featured as condition in Lemmas \ref{lem:upr.gives.navq} and \ref{lem:wbd.ifupr}, and can be seen as an important ingredient of the proof of Theorem \ref{thm:nondegenerate}.

\begin{lemma}\label{lem:ulpr}
Let $K>0$, $T\in(0,∞)$, $δ>0$ and $m>1$. If either
%  If either \\
% i) $m< 2$, $r\in[0,2]$ and $p\leq \frac{2n}{5n-2nm-2}$ or\\
%\begin{enumerate}[label={\roman*)},leftmargin=0em, itemindent=0.5cm, topsep=0cm, itemsep=0cm]
\beenum
 \item \label{ulpr;smallm}
$m\leq 2$, $r>1$, $p\geq 1$ satisfy $p\leq \frac{2N}{N-2}(m-1)$ and $r(1-\frac1p)\leq 2m-3+\frac2N$ or
\item \label{ulpr;largem}
$m\geq 2$, $r>1$ and $p\in[m-1,\frac{2N}{N-2}(m-1)]$ are such that $(\frac1{m-1}-\frac1p)r\leq 1+\frac2N$,
\end{enumerate}
then there is $C>0$ such that 
whenever $(u,w)\in(\usualspace[T])^2$ solves \eqref{eq:uw-system} for some $D\in\Cdm$ and for initial data $(u_0,w_0)$ with \eqref{uw-init} and $\norm[\Lmme]{u_0}\leq K, \norm[\Lom1]{w_0}\leq K$, we have 
%ii) $n\geq 2$, $m>1$, $r\in(1,∞)$ and $p\in [1,\frac{2n}{n-2}(m-1))$ (or $p\in[1,∞)$ in case of $n=2$) are such that $r(1-\frac1p)\leq 2\frac{m-\frac32+\frac1n}{(m-1)^2}$\\
%then 
\begin{equation}\label{eq:ulpr}
 \intnt\norm[\Lom p]{u}^r < C(1+t^{r+1}) \quad \text{for all } t\in(0,T). 
\end{equation}
\end{lemma}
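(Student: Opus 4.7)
The target bound will be obtained via the Gagliardo--Nirenberg inequality applied to the auxiliary function $\phi:=u^{m-1}$. Since $|\nabla\phi|^2=(m-1)^2u^{2m-4}|\nabla u|^2$, Lemma \ref{lem:ionaumme} supplies both
\[
\intnt\io|\nabla\phi|^2\leq C_1(1+t) \qquad\text{and}\qquad \io\phi(\cdot,s)\leq C_1(1+s)\ \text{for }s\in(0,t),
\]
while Lemma \ref{lem:massconservation:u} gives that $\norm[\Lom1]{u(\cdot,s)}=\norm[\Lom1]{u_0}$ remains constant in time.

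Writing the target quantity as $\norm[\Lom p]{u}^r=\norm[\Lom P]{\phi}^{\gamma}$ with $P:=\frac{p}{m-1}$ and $\gamma:=\frac{r}{m-1}$, I would invoke Lemma \ref{lem:GNI}\ref{GNI:firstderivative} with a case-dependent choice of the reference space $L^q$: in case \ref{ulpr;smallm}, where $m\leq 2$ legitimises $q:=\frac{1}{m-1}\geq 1$, exploiting that $\norm[\Lom q]{\phi}^{q}=\norm[\Lom1]{u}$ is uniformly bounded in time by mass conservation; in case \ref{ulpr;largem}, where $m\geq 2$, taking instead $q:=1$ and appealing to the bound $\norm[\Lom1]{\phi}(s)\leq C_1(1+s)$ from \eqref{eq:intumme}. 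The admissibility condition $q\leq P\leq\frac{2N}{N-2}$ of the GNI then translates, in each case, exactly into the $p$-interval given in the hypothesis, and selecting $s=q$ inside the GNI lets the additive correction be absorbed into the same $L^q$-norm.

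A direct computation of the interpolation parameter $a=\frac{1/q-1/P}{1/q+1/N-1/2}$ shows that the decisive requirement $\frac{ar}{m-1}\leq 2$ reduces to $r(1-\tfrac1p)\leq 2m-3+\tfrac2N$ in case \ref{ulpr;smallm} and to $r(\tfrac{1}{m-1}-\tfrac1p)\leq 1+\tfrac2N$ in case \ref{ulpr;largem}, thereby matching the stated hypotheses exactly. With these choices, GNI yields, pointwise in $s\in(0,t)$,
\[
\norm[\Lom p]{u(\cdot,s)}^r\leq c\norm[\Lom2]{\nabla\phi(\cdot,s)}^{\frac{ar}{m-1}}\norm[\Lom q]{\phi(\cdot,s)}^{\frac{(1-a)r}{m-1}}+c\norm[\Lom q]{\phi(\cdot,s)}^{\frac{r}{m-1}}.
\]

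To conclude, I would integrate this over $(0,t)$, estimating $\norm[\Lom q]{\phi(\cdot,s)}$ either by its constant bound (case i) or by $C_1(1+t)$ (case ii), and applying H\"older's inequality to the first summand with the exponents $\tfrac{2(m-1)}{ar},\tfrac{2(m-1)}{2(m-1)-ar}$ in conjunction with $\intnt\norm[\Lom2]{\nabla\phi}^2\leq C_1(1+t)$; this is admissible precisely because $\tfrac{ar}{m-1}\leq 2$. Collecting the resulting polynomial factors in $t$ gives an upper bound of the form $C(1+t)^{r+1}\leq C(1+t^{r+1})$, establishing \eqref{eq:ulpr}. The only genuine obstacle is choosing the correct reference space for $\phi$ in each regime: the threshold $m=2$ is dictated by whether $\tfrac{1}{m-1}\gtrless 1$, which determines whether mass conservation can be invoked directly; once this dichotomy is put in place, the two GNI computations line up with the two displayed hypotheses automatically, and no further analytic subtlety arises.
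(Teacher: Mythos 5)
Your proposal is correct and follows essentially the same route as the paper: rewrite $u^r$ as a power of $u^{m-1}$, apply Lemma~\ref{lem:GNI}~\ref{GNI:firstderivative} with interpolation exponent $q=\frac1{m-1}$ when $m\leq2$ (so that mass conservation controls the lower-order norm) and $q=1$ when $m\geq2$ (so that \eqref{eq:intumme} controls it), verify that $\frac{ar}{m-1}\leq2$ unwinds exactly to the stated algebraic hypotheses, and then close with the spatio-temporal bound on $\nabla u^{m-1}$ from Lemma~\ref{lem:ionaumme}. The only cosmetic deviations are that you finish with H\"older's inequality where the paper prefers Young's inequality $\norm[\Lom2]{\nabla\phi}^{\frac{ar}{m-1}}\leq 1+\norm[\Lom2]{\nabla\phi}^2$, and in case~\ref{ulpr;largem} you take the GNI correction exponent $s=1$ (which gives a term growing at most like $(1+t)^{\frac{r}{m-1}+1}\leq(1+t)^{r+1}$, still admissible) rather than $s=\frac1{m-1}$ as in the paper (which leaves that term time-independent via mass conservation); both yield the stated bound.
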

\begin{proof}
\textbf{\ref{ulpr;smallm}} Due to $m>1$, the inequality $p\leq \frac{2N}{N-2}(m-1)$ is equivalent to $\frac{p}{m-1}\leq \frac{2N}{N-2}$, and $p\geq 1$ ensures $\frac{p}{m-1}\geq \frac1{m-1}$. Thus, the \GNI\ (Lemma \ref{lem:GNI} \ref{GNI:firstderivative}) yields $c_1>0$ such that with 
\[
 a:= \frac{m-1-\frac{m-1}p}{m-1+\frac1N-\frac12},
\]
 and hence $\frac r{m-1}a\leq 2$, 
 for all $t\in(0,\Tmax)$ we obtain
\begin{align*}
 \intnt\norm[\Lom p]{u}^r &=\intnt\norm[\Lom{\frac p{m-1}}]{u^{m-1}}^{\frac{r}{m-1}}\\
 &\leq c_1\intnt \norm[\Lom 2]{∇u^{m-1}}^{\frac r{m-1} a}\norm[\Lom{\frac1{m-1}}]{u^{m-1}}^{\frac{r}{m-1}(1-a)} + c_1 \intnt \norm[\Lom {\frac1{m-1}}]{u^{m-1}}^{\frac r {m-1}}\\
 &\leq c_1 \kl{\io u_0}^{\frac{r}{m-1}(1-a)} \intnt \kl{1+\norm[\Lom 2]{∇u^{m-1}}^2} + c_1\intnt \kl{\io u_0}^{\frac r{m-1}},
\end{align*}
where we have used Lemma \ref{lem:massconservation:u}, 
and can conclude the proof with applications of Lemma \ref{lem:ionaumme} and Young's inequality.

\textbf{\ref{ulpr;largem}} 
From Lemma \ref{lem:ionaumme} we obtain $c_2>0$ such that 
\[
 \io u^{m-1}(\cdot,t)\leq c_2 (1+t) \quad \text{and}\quad \intnt \io |\na u^{m-1}|^2 \leq c_2 (1+t) \fatntmax.
\]
 The fact that $p\in[m-1,\frac{2N}{N-2}(m-1)]$ entails both $\frac p{m-1}\geq 1$ and $\frac{p}{m-1}\leq \frac{2N}{N-2}$. Therefore, with 
\[
 a:=\frac{1-\frac{m-1}p}{1+\frac1N-\frac12}, 
\]
Lemma \ref{lem:GNI} \ref{GNI:firstderivative} produces $c_3>0$ such that for all $t\in(0,\Tmax)$ 
\begin{align*}
 \intnt \norm[\Lom p]{u}^r &= \intnt \norm[\Lom {\frac p{m-1}}]{u^{m-1}}^{\frac r{m-1}} \\
 &\leq c_3\intnt \norm[\Lom 2]{\na u^{m-1}}^{\frac{r}{m-1}a}\norm[\Lom 1]{u^{m-1}}^{\frac{r(1-a)}{m-1}} + c_3\intnt \norm[\Lom {\frac1{m-1}}]{u^{m-1}}^{\frac{r}{m-1}}\\
&\leq c_2\kl{c_3(1+t)}^{\frac{r(1-a)}{m-1}}\intnt\kl{\norm[\Lom2]{\na u^{m-1}}^2+1} + c_3\norm[\Lom 1]{u_0}^r t \leq c_4+c_5t^{r+1},
\end{align*}
where we have used that $\frac{ra}{m-1}\leq 2$ and, aided by Lemma \ref{lem:massconservation:u} and the trivial inequality $r\frac{1-a}{m-1}\leq r$, chosen suitable positive constants $c_4$ and $c_5$. 
\end{proof}
%{Den Exponenten $r+1$ brauchen wir nur im zweiten Teil. (Im ersten passt es aber trivialerweise auch.) Besser gleich nur $C(T)$?}

As preparation for exploiting \eqref{eq:ulpr} in the second equation of \eqref{sys}, we recall 
% \begin{lemma}\label{lem:maxSobolev}
%  Let $Ω\subsetℝ^n$ be a smooth, bounded domain. Let $p,q\in(1,∞)$. Then for every $T>0$ and $M>0$ there exists $C>0$ such that for any nonnegative $v_0\in W^{2,∞}(\Omega)$ %{hier ginge auch ein größerer Interpolationsraum} {(D(A_q),L^q)_{.}}
%  satisfying \red{$\delny v_0\amrand=0$ as well as }%$\norm[\Liom]{v_0}\leq M$ and 
% $\norm[W^{2,∞}(\Omega)]{v_0}\leq M$ and every $u\in \LT q {\Lom p}$ the unique solution of 
% \[
%  v_t=Δv-uv \;\;\text{ in } \Om\times(0,T),\quad \delny v\amrand=0, \quad v(\cdot ,0)=v_0
% \]
% satisfies 
% \[
%  \intnt \norm[\Lom p]{Δv}^q\leq C+ C \intnt  \norm[\Lom p]{u}^q.
% \]
% \end{lemma}
\begin{lemma}\label{lem:maxSobolev}
 Let $p,q\in(1,∞)$. Then for every $T>0$ there exists $C>0$ such that %for any nonnegative $v_0\in W^{2,∞}(\Omega)$ %{hier ginge auch ein größerer Interpolationsraum} {(D(A_q),L^q)_{.}}
% satisfying \red{$\delny v_0\amrand=0$ as well as }%$\norm[\Liom]{v_0}\leq M$ and 
%$\norm[W^{2,∞}(\Omega)]{v_0}\leq M$ and 
for every $z\in \LT q {\Lom p}$ the unique solution of 
\[
 v_t=Δv-z \;\;\text{ in } \Om\times(0,T),\quad \delny v\amrand=0, \quad v(\cdot ,0)=0
\]
satisfies 
\[
 \intnT \norm[\Lom p]{Δv}^q\leq C \intnT  \norm[\Lom p]{z}^q.
\]
\end{lemma}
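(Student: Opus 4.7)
The statement is the standard maximal $L^q$-regularity estimate for the Neumann heat equation on a smooth bounded domain, and the plan is to reduce it to an abstract maximal regularity theorem for sectorial operators.

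First I would reformulate the equation abstractly: set $X := L^p(\Omega)$ and let $A$ denote the realization of $-\Delta$ in $X$ with Neumann boundary conditions, having domain $D(A) = \{\varphi \in W^{2,p}(\Omega): \partial_\nu\varphi|_{\partial\Omega} = 0\}$. It is classical that $A+1$ is then a densely defined sectorial operator on $X$ generating a bounded analytic semigroup. With this setup, the PDE becomes the abstract Cauchy problem $v'(t)+Av(t) = -z(t)$, $v(0) = 0$, and what must be shown is precisely the maximal $L^q(0,T;X)$-regularity estimate
\[
\|Av\|_{L^q(0,T;X)} \leq C\,\|z\|_{L^q(0,T;X)}.
\]

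The second and decisive step is to invoke a maximal regularity result for $A$. The cleanest route is to use that the Neumann Laplacian on a smooth bounded domain admits a bounded $H^\infty$-functional calculus on $L^p(\Omega)$ for every $p\in(1,\infty)$, with angle strictly less than $\pi/2$; since $L^p(\Omega)$ is a UMD space, Weis' characterisation of maximal regularity then yields maximal $L^q$-regularity on every finite interval $(0,T)$. Equivalent routes leading to the same conclusion are the Dore--Venni theorem, which only requires that $A$ have bounded imaginary powers of suitable type, or the Mikhlin-type Fourier multiplier arguments of Hieber--Pr\"uss applied directly to the Neumann heat semigroup. The kernel-of-$A$ issue (constants lie in $\ker A$) does not affect the estimate, since one can work with $A+1$ and absorb the lower-order term via Gronwall on the finite interval $(0,T)$, at the cost of a $T$-dependent constant — which matches the statement of the lemma. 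Unwinding $Av=-\Delta v$ gives exactly the claimed inequality.

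The main obstacle is the abstract input, namely the bounded $H^\infty$-calculus (equivalently, $R$-sectoriality) of the Neumann Laplacian on $L^p(\Omega)$. This is non-trivial but well-established in the literature on operator-theoretic methods for parabolic problems, so in practice Lemma \ref{lem:maxSobolev} would be quoted from references such as Denk--Hieber--Pr\"uss or Hieber--Pr\"uss rather than proved from scratch here; all of the PDE-specific content of the lemma is already absorbed into the verification, carried out in those sources, that $-\Delta_N$ generates an analytic semigroup on $L^p(\Omega)$ with the required functional-calculus properties.
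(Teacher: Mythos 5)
Your proposal is correct and matches the paper's approach: the paper likewise treats the lemma as a direct consequence of abstract maximal $L^q$-regularity for the Neumann heat semigroup on $L^p(\Omega)$, citing Giga--Sohr and Hieber--Pr\"uss. You simply spell out the sectoriality/$H^\infty$-calculus background that underlies those references in more detail.
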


\begin{proof}
We obtain this lemma as straightforward consequence of well-known maximal regularity assertions, cf. \cite{giga_sohr},\cite{hieber_pruess}.
%Falls es für Hieber/Prüss eine sg. ``of negative exponential type'' sein soll: use $v_t=(Δ-1)v + v - uv$. Then there is C st \intnt \norm[p]{Δv-v}^q\leq C\intnt \norm{uv}+c\intnt\norm{v} + c\norm{v_0}$, ie $\intnt\norm[p]{Δv}\leq CM\int\nomr{u}^q+(C+1)\intnt\norm{v}+C\norm{v_0}\leq CM\int\norm{u}^q + (C+1)TM+CM.
\end{proof}
Lemma \ref{lem:maxSobolev} empowers us to develop \eqref{eq:ulpr} into useful knowledge about the gradient of $v$: 

%Falls p>\frac{2n}3, ist die ``Zusatzbedingung'' q< np/(n-p) schwächer als die bereits vorhandene. 
\begin{lemma}\label{lem:upr.gives.navq}
 Let $p\geq \frac N2$, $r\geq p$, $(2-\frac Np)r>N$, and 
\begin{align*}
 \begin{cases} q\in(1,N+(2-\frac Np)r], &\text{if } p\geq N\\
  q\in(1,N+(2-\frac Np)r]\cap(1,\frac{Np}{N-p})&,\text{if } \frac{N}2<p<N.
 \end{cases}
\end{align*}
Then for every $K>0$ and $T>0$ there is $C>0$ such that for every $v_0\in W^{1,\infty}(\Om)$ with $\norm[W^{1,∞}(\Om)]{v_0}\leq K$, and every $u\in L^r((0,T);\Lom p)$ for which
\begin{equation}\label{eq:condintntlpu}
 \intnT \norm[\Lom p]{u}^r < K 
\end{equation}
is satisfied, the solution $v$ of \eqref{sys:v} fulfils
\begin{equation}\label{eq:ionavq}
 \intnT\io |\na v|^q<C.
\end{equation}
\end{lemma}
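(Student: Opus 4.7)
The plan is to split $v = v_1 + v_2$, where $v_1$ solves the homogeneous Neumann heat equation with data $v_0$, and $v_2$ solves $(v_2)_t = \Delta v_2 - uv$, $\delny v_2 \amrand = 0$, $v_2(\cdot,0) = 0$. By comparison $0 \leq v \leq \norm[\Liom]{v_0} \leq K$ and $0 \leq v_1 \leq \norm[\Liom]{v_0}$, so $\norm[\Liom]{v_2(\cdot,t)} \leq 2K$ throughout $\Om \times (0,T)$. For $v_1$ the $W^{1,\infty}$-contractivity of the Neumann heat semigroup supplies $\norm[\Liom]{\na v_1(\cdot,t)} \leq c_1 K$, and hence $\intnT\io|\na v_1|^q \leq c_1^q K^q T|\Om|$, which disposes of this piece.

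\textbf{The main contribution.} The interesting part is $v_2$. Since $\norm[\Lom p]{uv(\cdot,t)} \leq K \norm[\Lom p]{u(\cdot,t)}$, assumption \eqref{eq:condintntlpu} yields $\intnT \norm[\Lom p]{uv}^r \leq K^{r+1}$, and Lemma \ref{lem:maxSobolev} applied with exponents $(r,p)$ produces $c_2 > 0$ with $\intnT \norm[\Lom p]{\Delta v_2}^r \leq c_2$. To convert this into an $L^q$-bound on $\na v_2$, I would invoke Lemma \ref{lem:GNI} \ref{GNI:secondderivative} on $v_2$ with interpolation parameter $a := p(q-N)/(q(2p-N))$: in the range $q \in [2p, \infty)$ (for $p \geq N$) or $q \in [2p, Np/(N-p))$ (for $p < N$) one has $a \in [1/2,1)$, and together with $\norm[\Liom]{v_2} \leq 2K$ the inequality reads
\[
\norm[\Lom q]{\na v_2}^q \leq c_3 \kl{\norm[\Lom p]{\Delta v_2}^{qa} + 1}.
\]
A brief calculation identifies the hypothesized upper bound $q \leq N + (2-N/p)r$ with the relation $qa \leq r$, so H\"older's inequality in time delivers
\[
\intnT \norm[\Lom q]{\na v_2}^q \leq c_3 T^{1-qa/r} \kl{\intnT \norm[\Lom p]{\Delta v_2}^r}^{qa/r} + c_3 T \leq c_4,
\]
and combining with the $v_1$-estimate gives \eqref{eq:ionavq}.

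\textbf{Small $q$ and main obstacle.} For $q \in (1, 2p)$, where $a < 1/2$ and Lemma \ref{lem:GNI} \ref{GNI:secondderivative} is not directly applicable, I would introduce the auxiliary exponent $q' := 2p$; the hypothesis $r \geq p$ (and, in the subcritical range $p < N$, $p > N/2$, which is forced by $(2-N/p)r > N$) guarantees $q' \leq N + (2-N/p)r$ and, when $p < N$, also $q' < Np/(N-p)$. Hence the previous step controls $\norm[\Lom{q'}]{\na v_2}$, and the boundedness of $\Om$ yields $\norm[\Lom q]{\na v_2} \leq |\Om|^{1/q - 1/q'} \norm[\Lom{q'}]{\na v_2}$, finishing the argument. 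The main obstacle is the numerical matching between the Gagliardo--Nirenberg exponent $qa$ that arises in passing from $\Delta v_2$ back to $\na v_2$ and the temporal integrability exponent $r$ coming out of maximal Sobolev regularity; the admissible range for $q$ stated in the lemma is precisely the one that makes $qa \leq r$ hold.
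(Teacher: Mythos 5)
Your proof is correct and follows essentially the same route as the paper: the same decomposition $v = e^{t\Delta}v_0 + \vtilde$ with $\vtilde_t = \Delta\vtilde - uv$, $\vtilde(\cdot,0)=0$, the same application of Lemma \ref{lem:maxSobolev} to $z=uv$, the same second-order Gagliardo--Nirenberg interpolation (Lemma \ref{lem:GNI} \ref{GNI:secondderivative}) with exponent $a = p(q-N)/(q(2p-N))$, and the same identification of the hypothesis $q \leq N+(2-\frac{N}{p})r$ with the requirement $qa\leq r$. The only cosmetic differences are that you apply maximal Sobolev regularity before rather than after the interpolation step, use H\"older's inequality in time rather than Young's inequality to handle $qa< r$, and spell out the reduction from general $q$ to $q\geq 2p$ that the paper records as a ``without loss of generality''.
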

\begin{proof}
In order to prepare the application of Lemma \ref{lem:maxSobolev}, we decompose $v(\cdot,t)=\vtilde(\cdot,t)+e^{t\Delta}v_0$ in $\Om\times(0,T)$, where $\vtilde$ solves 
\[
 \vtilde_t=\Delta \vtilde - uv, \quad \vtilde(\cdot,0)=0, \quad \delny \vtilde\amrand=0.
\]
By nonnegativity of $v_0$ and $uv$, we clearly have $0\leq \vtilde\leq v\leq K$ in $\Om\times(0,T)$.\\
We let $q\leq N+(2-\frac Np)r$ and without loss of generality assume $q\geq 2p$ (which is possible since $2p=N+2p-N\leq N+(2p- N)\frac{r}{p}=N+(2-\frac Np)r$ and also $2p< \frac{Np}{N-p}$ if $p\in(\frac N2,N)$). We note that $q\leq N+(2-\frac{N}p)r$ implies that $r\geq \frac{q-N}{2-\frac Np}=\frac{(q-N)p}{2p-N}$ and hence with 
\[
 a:= \frac{p(q-N)}{q(2p-N)} 
\]
we have $aq\leq r$. 
Moreover, $q\geq2p$ ensures that $pq-Np\geq pq-\frac{Nq}2=\frac12 q(2p-N)$ and thus $a\geq \frac12$, and, furthermore, $(p-N)q>-Np$, which is obvious for $p>N$ and holds by assumption on $q$ if $p<N$, entails $2pq-Nq> pq-Np$ and hence $a< 1$. Accordingly, from \cite[Lemma 1.3 iii)]{win_aggregationvs} and the \GNI\ (Lemma \ref{lem:GNI} \ref{GNI:secondderivative}) we obtain $c_1>0$, $c_2>0$, respectively, such that we have 
\begin{align*}
 \intnT\io |\na v|^q &\leq 2^q\intnT\io |\na e^{t\Delta}v_0|^q + 2^q\intnT\io |\na \vtilde|^q\\
 &\leq c_2T\norm[\Lom q]{\na v_0}^q +2^q\intnT \norm[\Lom q]{\na \vtilde}^q\\
 &\leq c_2T |\Om|^{\frac1q}\norm[\Lom \infty]{\na v_0} + c_2 \intnT\norm[\Lom p]{Δ\vtilde}^{aq}\norm[\Liom]{\vtilde}^{(1-a)q}+ c\intnT\norm[\Liom]{\vtilde}^q.
%&\leq cK^qT+cK^{(1-a)q}\intnT\norm[\Lom p]{Δv}^{aq}. 
\end{align*}
Since $aq<r$, due to \YI\ and boundedness of $\vtilde$ this estimate can be turned into 
\[
 \intnT\io |\na v|^q\leq c_3+c_4\intnT\norm[\Lom p]{Δ\vtilde}^r,
\]
for some $c_3>0$, $c_4>0$, where we may invoke the maximal Sobolev result of Lemma \ref{lem:maxSobolev} for $z=uv$ and hence 
\(\intnT  \norm[\Lom p]{z}^r\leq K^r \intnT\norm[\Lom p]{u}^r\) to conclude \eqref{eq:ionavq} from \eqref{eq:condintntlpu}.
\end{proof}

% \begin{proof}
% We let $q\leq N+(2-\frac Np)r$ and without loss of generality assume $q\geq 2p$ (which is possible since $2p=N+2p-N\leq N+(2p- N)\frac{r}{p}=N+(2-\frac Np)r$ and also $2p\leq \frac{Np}{N-p}$ if $p\in(\frac N2,N)$). We note that $q\leq N+(2-\frac{N}p)r$ implies that $r\geq \frac{q-N}{2-\frac Np}=\frac{(q-N)p}{2p-N}$ and hence with 
% \[
%  a:= \frac{p(q-N)}{q(2p-N)} 
% \]
% we have $aq\leq r$. 
% Moreover, $q\geq2p$ ensures that $pq-NP\geq pq-\frac{Nq}2=\frac12 q(2p-N)$ and thus $a\geq \frac12$, and, furthermore, $(p-N)q>-Np$, which is pbvious for $p>N$ and holds by assumption on $q$ if $p<N$, entails $2pq-Nq> pq-Np$ and hence $a< 1$. Accordingly, from the \GNI\ (Lemma \ref{lem:GNI} \ref{GNI:secondderivative}) we obtain $c>0$ such that we have 
% \begin{align*}
%  \intnT\io |\na v|^q = \intnT\norm[\Lom q]{\na v}^q &\leq c \intnT\norm[\Lom p]{Δv}^{aq}\norm[\Liom]{v}^{(1-a)q}+ c\intnT\norm[\Liom]{v}^q \\
% &\leq cK^qT+cK^{(1-a)q}\intnT\norm[\Lom p]{Δv}^{aq}. 
% \end{align*}
% Since $aq<r$, due to \YI\ this estimate can be turned into 
% \[
%  \intnT\io |\na v|^q\leq c_1+c_2\intnT\norm[\Lom p]{Δv}^r,
% \]
% where we may invoke the maximal Sobolev result of Lemma \ref{lem:maxSobolev} and \eqref{eq:condintntlpu} to conclude \eqref{eq:ionavq}.
% \end{proof}

Another consequence of \eqref{eq:ulpr} is (local-in-time) boundedness of $w$:

\begin{lemma}\label{lem:wbd.ifupr} Assume that $r\in(1,∞)$, $p\in[1,∞)$ are such that $\frac{Nr}{2p(r-1)}<1$. Then for every $K>0$ there is $C>0$ such that whenever, for some $T>0$, $w\in\usualspace[T]$ solves \eqref{uw:w}, \eqref{uw:rand}, \eqref{uw:init} for some $w_0$ as in \eqref{uw-init} and some $u\in \usualspace[T]$ such that 
$\norm[\Liom]{w_0}\leq K$, $\frac{1}{|\Om|}\io u(\cdot,t)\leq K$ on $(0,T)$ and moreover 
\[
 \intnT \norm[\Lom p]{u}^r < K,
\]
%for some $T\in(0,\Tmax]\cap(0,\infty)$, 
then 
\[
 w(x,t) \leq C(1+t) \qquad \text{for all } (x,t)\in \Om\times (0,T). 
\]
\end{lemma}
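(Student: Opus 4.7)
My plan is to exploit the sign of the gradient term in \eqref{uw:w} by comparing $w$ with the solution of a linear heat equation having source $u$, and then to control that auxiliary object via the Duhamel formula together with standard smoothing estimates for the Neumann heat semigroup.

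First, since $w_t - \Delta w = u - |\na w|^2 \leq u$, I would introduce $\wtilde \in \usualspace[T]$ as the solution of
\begin{equation*}
 \wtilde_t = \Delta \wtilde + u \text{ in } \Om\times(0,T),\qquad \delny \wtilde\amrand = 0,\qquad \wtilde(\cdot,0)=w_0.
\end{equation*}
The difference $\wtilde - w$ satisfies $(\wtilde - w)_t - \Delta(\wtilde - w) = |\na w|^2 \geq 0$ with vanishing initial and boundary data, so the parabolic comparison principle yields $w\leq \wtilde$ pointwise, and it suffices to prove $\norm[\Liom]{\wtilde(\cdot,t)}\leq C(1+t)$. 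Writing
\begin{equation*}
 \wtilde(\cdot,t) = e^{t\Delta}w_0 + \intnt e^{(t-s)\Delta}u(\cdot,s)\,\ds,
\end{equation*}
I would bound the first summand by $\norm[\Liom]{w_0}\leq K$ using the $L^\infty$-contractivity of the Neumann heat semigroup.

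For the convolution term I intend to invoke the standard smoothing estimate (cf. \cite[Lemma 1.3]{win_aggregationvs}) in its mean-value-separated form
\begin{equation*}
 \norm[\Liom]{e^{\tau\Delta}f} \leq \frac{1}{|\Om|}\io f + c_1 \tau^{-\frac{N}{2p}}\norm[\Lom p]{f},\qquad \tau>0,
\end{equation*}
which is available because the hypothesis $\frac{Nr}{2p(r-1)}<1$ together with $r>1$ forces $p>\frac{N}{2}$. Combined with the mass control $\frac{1}{|\Om|}\io u(\cdot,s)\leq K$, this yields
\begin{equation*}
 \norm[\Liom]{\wtilde(\cdot,t)} \leq K + Kt + c_1 \intnt (t-s)^{-\frac{N}{2p}}\norm[\Lom p]{u(\cdot,s)}\,\ds.
\end{equation*}

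The concluding step is Hölder's inequality in time with conjugate exponents $r$ and $r'=\frac{r}{r-1}$, bounding the remaining integral by
\begin{equation*}
 c_1\left(\intnt (t-s)^{-\frac{Nr'}{2p}}\,\ds\right)^{\!1/r'}\!\left(\intnt \norm[\Lom p]{u}^r\,\ds\right)^{\!1/r} \leq c_2 K^{1/r}\, t^{\frac{1}{r'}-\frac{N}{2p}}.
\end{equation*}
The first factor is finite precisely because the assumption rewrites as $\frac{Nr'}{2p}<1$, and the exponent $\frac{1}{r'}-\frac{N}{2p}$ lies in $(0,1)$, whence $t^{\frac{1}{r'}-\frac{N}{2p}}\leq 1+t$. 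Assembling the three pieces gives $w\leq \wtilde \leq C(1+t)$ with a constant depending only on $K$ and on the exponents. I expect the only delicate point to be verifying that the threshold $\frac{Nr}{2p(r-1)}<1$ is exactly what is needed for the time-kernel integral; beyond this matching, the argument is a routine Duhamel/Hölder computation.
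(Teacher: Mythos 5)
Your proposal is correct and follows essentially the same route as the paper: compare $w$ with the solution $\wtilde$ of the linear heat equation with source $u$ via the parabolic comparison principle, then control $\wtilde$ through the Duhamel representation and the $L^p$--$L^\infty$ smoothing of the Neumann heat semigroup applied to the mean-corrected datum $u-\ubar$. The only cosmetic differences are that the paper retains the exponential decay factor $e^{-\lambda_1(t-s)}$ in the semigroup estimate and separates the convolution with Young's inequality (yielding a $T$-independent integral $\int_0^\infty$), whereas you drop the exponential factor and use Hölder in time, absorbing the resulting sublinear power $t^{\frac1{r'}-\frac N{2p}}$ into $1+t$; both give the same $T$-independent constant.
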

\begin{proof}
 By nonpositivity of $-|∇w|^2$, we have that $0\leq w\leq \wtilde$, where $\wtilde$ solves 
\[
 \wtilde_t=Δ\wtilde + u,\quad \delny \wtilde\amrand=0,\quad  \wtilde(\cdot ,0)=w_0.
\]
For this function we can estimate 
\begin{equation}\label{eq:wtilde:duhamel}
 \norm[\Liom]{\wtilde(\cdot ,t)} \leq \norm[\Liom]{w_0} + \intnt \norm[\Liom]{e^{(t-s)Δ}\kl{u(\cdot ,s)-\ubar}} ds + \ubar \cdot t \quad \text{for all } t\in(0,T), 
%\leq K(1+t)+\int_0^t
\end{equation}
where $\ubar = \frac1{|\Omega|} \io u(\cdot ,t)\leq K$ %= \frac1{|Ω|}\io u_0$: not known, because we don't suppose u to be a solution of the first eq. 
For assessing the integral in \eqref{eq:wtilde:duhamel} we invoke \cite[Lemma 1.3 i)]{win_aggregationvs} to obtain $c_1>0$ such that 
\begin{align}\label{eq:normwtilde}
 &\intnt \norm[\Liom]{e^{(t-s)Δ} \kl{u(\cdot ,s)-\ubar}} ds\nn\\
 &\leq  c_1 \intnt \kl{1+(t-s)^{-\frac N{2p}}}e^{-λ_1(t-s)}\norm[\Lom p]{u(\cdot ,s)-\ubar}\\\
 &\leq  c_1 \intnt \kl{1+(t-s)^{-\frac N{2p}}} e^{-λ_1(t-s)}\norm[\Lom p]{u(\cdot ,s)} ds + c_1 K |Ω|^{\frac1p}\intninf \kl{1+σ^{-\frac N{2p}}}e^{-λ_1σ} dσ\nn\\
 & \leq c_1\intninf \kl{1+σ^{-\frac N{2p}}}^{\frac{r}{r-1}} e^{-λ_1σ\frac{r}{r-1}} dσ + c_1\intnt \norm[\Lom p]{u(\cdot ,s)}^r + c_1 K |Ω|^{\frac1p}\intninf \kl{1+σ^{-\frac N{2p}}}e^{-λ_1σ} dσ\nn
\end{align}
for all $t\in(0,T)$. 
Collecting the constants in \eqref{eq:wtilde:duhamel} and \eqref{eq:normwtilde}, we see that for all $t\in(0,T)$
\[
 w(x,t)\leq \norm[\Liom]{\wtilde(\cdot ,t)}\leq C(1+t), 
\]
where 
\begin{align*}
 &C:= K+c_1K+c_1K|\Om|^{\frac1p} \intninf \kl{1+σ^{-\frac N{2p}}}e^{-λ_1σ} dσ+k_1 \intninf \kl{1+σ^{-\frac N{2p}}}^{\frac{r}{r-1}} e^{-λ_1σ\frac{r}{r-1}} dσ,
% \max\bigg\{ k_1K+\ubar, \\
%  &k_1\ubar|Ω|^{\frac 1p}\intninf \kl{1+σ^{-\frac{N}{2p}}}e^{-λ_1σ}dσ + \intninf \kl{1+σ^{-\frac N{2p}}}^{\frac r{r-1}} e^{-λ_1σ\frac{r}{r-1}}dσ+k_1K + K\bigg\}, 
\end{align*}
which is finite due to $\frac{Nr}{2p(r-1)}<1$ (and its consequence $\frac{N}{2p}<1$).
\end{proof}

If we can find parameters that allow for an application of Lemma \ref{lem:ulpr} and Lemma \ref{lem:wbd.ifupr} at the same time, we can conclude boundedness of $w$. This is the goal we pursue in the following lemma: 
\begin{lemma}\label{lem:wbd}
 Let 
\begin{equation}\label{eq:lem:mgeqn4}
 m>1+\frac N4 
\end{equation}
and $δ>0$. Then for all $T\in(0,\infty)$ there is $C>0$ such that for 
every $D\in \Cdm$ and every $(u_0,w_0)$ as in \eqref{uw-init} with $\norm[\Lmme]{u_0}\leq K$, $\norm[\Lom\infty]{w_0}\leq K$, any solution $(u,w)\in(\usualspace[T])^2$ of \eqref{eq:uw-system} satisfies 
\[
 w(x,t)\leq C \qquad \text{for all } x\in\Om \text{ and all } t\in(0,T). 
\]
\end{lemma}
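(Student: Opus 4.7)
The plan is to combine Lemma~\ref{lem:ulpr} with Lemma~\ref{lem:wbd.ifupr}. Since Lemma~\ref{lem:wbd.ifupr} returns a bound of the form $w(x,t)\leq C(1+t)$, on any fixed finite interval $(0,T)$ it suffices to find one pair $(p,r)$ with $p\geq 1$ and $r>1$ such that (a) one of the two cases of Lemma~\ref{lem:ulpr} applies (yielding $\int_0^T\norm[\Lom p]{u}^r<\infty$ with a constant depending only on $K,T,m,\delta$), and (b) the Lemma~\ref{lem:wbd.ifupr} compatibility condition $\frac{Nr}{2p(r-1)}<1$ holds, i.e.\ $2p>N$ and $r>\frac{2p}{2p-N}$. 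The mass conservation of Lemma~\ref{lem:massconservation:u} supplies the bound $\frac{1}{\betr{\Om}}\io u\leq K'$ required by Lemma~\ref{lem:wbd.ifupr} (with $K'$ controlled by $K$ via a Hölder embedding when $m>2$), and the hypothesis $\norm[\Liom]{w_0}\leq K$ implies $\norm[\Lom1]{w_0}\leq\betr{\Om}K$, so that Lemma~\ref{lem:ulpr} applies after relabelling the constant.

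The substantive step is to verify that the threshold $m>1+\frac{N}{4}$ is precisely what allows the two admissibility ranges to overlap. Focusing on case~\ref{ulpr;largem} of Lemma~\ref{lem:ulpr} (available when $m\geq 2$; case~\ref{ulpr;smallm} is treated analogously when $1+\frac{N}{4}<m<2$, which is only relevant for $N\in\{2,3\}$), for fixed $p\in(m-1,\frac{2N}{N-2}(m-1)]$ the largest admissible exponent is $r_{\max}=\kl{1+\frac{2}{N}}\frac{(m-1)p}{p-(m-1)}$. A direct computation shows that the inequality $r_{\max}>\frac{2p}{2p-N}$ is equivalent to $p>p^{\ast}:=\frac{N^{2}(m-1)}{2[(N+2)(m-1)-N]}$, which is well-defined because $m>1+\frac{N}{4}\geq 1+\frac{N}{N+2}$ forces the denominator of $p^{\ast}$ to be positive. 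A further elementary manipulation then reveals that $p^{\ast}<\frac{2N}{N-2}(m-1)$ if and only if $4(m-1)>N$, i.e.\ exactly when $m>1+\frac{N}{4}$. Hence for any such $m$ one may pick $p\in(\max\set{m-1,p^{\ast}},\frac{2N}{N-2}(m-1)]$ and then select any $r$ with $\frac{2p}{2p-N}<r\leq r_{\max}$; both hypotheses are then in force.

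The main obstacle is precisely this algebraic compatibility check: the two lemmas separately impose competing demands on $p$ and $r$ (Lemma~\ref{lem:ulpr} favours moderate $p$ bounded by the Sobolev exponent of $\na u^{m-1}$, whereas $\frac{Nr}{2p(r-1)}<1$ forbids $p$ from being too small relative to $N$), and the structural role of the assumption $m>1+\frac{N}{4}$ in the whole argument is solely to produce an open region in the $(p,r)$-parameter space where both conditions hold simultaneously. Once such a pair is fixed, the remainder is mechanical: apply Lemma~\ref{lem:ulpr} to obtain $\intnT\norm[\Lom p]{u}^{r}\leq C_{1}(1+T^{r+1})$ with $C_{1}$ depending only on $K,m,\delta$; feed this (together with the mass bound) into Lemma~\ref{lem:wbd.ifupr} with its constant taken as $\max\set{K,C_{1}(1+T^{r+1})+1,\betr{\Om}K,\norm[\Lom1]{u_{0}}/\betr{\Om}}$ to conclude $w(x,t)\leq C_{2}(1+T)$ on $\Om\times(0,T)$, and take $C:=C_{2}(1+T)$.
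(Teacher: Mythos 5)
Your proposal is correct and takes essentially the same approach as the paper: combine Lemma~\ref{lem:ulpr} with Lemma~\ref{lem:wbd.ifupr} and verify that $m>1+\frac N4$ produces a nonempty overlap in the admissible $(p,r)$-ranges, and your threshold $p^{\ast}=\frac{N^2(m-1)}{2[(N+2)(m-1)-N]}$ coincides exactly with the paper's lower bound $\frac{N^2(m-1)}{2N(m-1)+4(m-1)-2N}$ in the $m\geq 2$ case. You fully work out only $m\geq 2$ (asserting the case $1+\frac N4<m<2$, relevant only for $N\in\{2,3\}$, is analogous), whereas the paper treats both explicitly; on the other hand, your clean derivation of the equivalence $p^{\ast}<\frac{2N}{N-2}(m-1)\iff m>1+\frac N4$ exposes the role of the threshold more transparently than the paper's chain of ad hoc inequalities.
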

\begin{proof}
Let us first consider the case $m\in(2-\frac1N,2]$ (that is of interest only if $N<4$, because $m$ is supposed to satisfy $m>1+\frac N4$) and observe that by \eqref{eq:lem:mgeqn4}, we have 
\[
 m> \left.\begin{cases}\frac32,&\text{if } N=2\\\frac74,&\text{if } N=3%\\\geq2&n\geq 4
\end{cases}\right\}= \frac54 - \frac1{2N} + \frac N8 +\sqrt{\kl{\frac54 -\frac1{2N}+\frac{N}8}^2-\frac54+\frac1N-\frac38N}.
\]
Therefore, we see that  
\[
 4m^2-10m+\frac4N m -Nm+5-\frac4N+\frac32N>0
\]
and hence 
\[
 (N-2)\kl{m-\frac32}=Nm-\frac32N-2m+3 < 4m^2-8m+\frac4N m-4m+8-\frac4N=2(m-1)\kl{2m-4+\frac2N}\!, 
\]
so that 
\[
 \frac{N(m-\frac32)}{2m-4+\frac2N} < \frac{2N}{N-2}(m-1). 
\]
Since moreover $\frac{2N}{N-2}(m-1)>1$, it is possible to choose $p\geq 1$ such that $p\in\kl{\frac{N(m-\frac32)}{2m-4+\frac2N},\frac{2N}{N-2}(m-1)}$. With this choice of $p$ we let 
\[
 r:=\frac{2m-3+\frac2N}{1-\frac1p}
\]
and note that $2m-3+\frac2N>4-\frac2N-3+\frac2N=1>1-\frac1p$ entails $r>1$. Hence Lemma \ref{lem:ulpr} \ref{ulpr;smallm} is applicable. Moreover, 
\[
 \frac{2p}N\kl{1-\frac1r}=\frac{2p}N\kl{1-\frac{1-\frac1p}{2m-3+\frac2N}}=\frac2N\cdot\frac{p(2m-3+\frac2N)-p+1}{2m-3+\frac2N}>\frac2N\cdot\frac{N(m-\frac32)+1}{2(m-\frac32+\frac1N)}=1
\]
and we can additionally invoke Lemma \ref{lem:wbd.ifupr} so as to obtain the desired boundedness of $w$ on $\Om\times(0,T)$.\\
If $m\geq 2$ (and $m> 1+\frac N4$), we note that 
\[
 \frac{N^2(m-1)}{2N(m-1)+4(m-1)-2N}< \frac{N^2(m-1)}{2N\frac N4+4\frac{N}4-2N}=\frac{N^2(m-1)}{\frac{N^2}2-N}=\frac{2N}{N-2}(m-1).
\]
Since $m\geq 2$, 
\[
 \frac{1+\frac2N}{m-1}\leq 1+\frac2N < 1+\frac4N+\frac4{N^2}, 
\]
and hence 
\[
 \frac1{m-1}-1-\frac2N < \frac2N+\frac4{N^2}-\frac2{N(m-1)}=\frac{2N(m-1)+4(m-1)-2N}{N^2(m-1)}.
\]
Therefore we can pick $p\in\kl{\frac{N^2(m-1)}{2N(m-1)+4(m-1)-2N},\frac{2N}{N-2}(m-1)}$ such that $\frac1p>\frac1{m-1}-1-\frac2N$ and $p> m-1$, and we let $r:=\frac{1+\frac2N}{\frac1{m-1}-\frac1p}$. 
Then $r>1$ and, apparently, $(\frac1{m-1}-\frac1p)r\leq 1+\frac2N$, warranting applicability of Lemma \ref{lem:ulpr}.
Moreover, $p>\frac{N^2(m-1)}{2N(m-1)+4(m-1)-2N}$ entails $\frac1p<\frac2N+\kl{\frac2N}^2-\frac2{N(m-1)}$ and thus $\frac{N}{2p}(1+\frac2N)=\frac{N}{2p}+\frac1p<1+\frac2N-\frac1{m-1}+\frac1p$ and hence, finally, 
\[
 \frac{N}{2p}<1-\frac{\frac1{m-1}-\frac1p}{1+\frac2N}=1-\frac1r,
\]
which permits us to employ Lemma \ref{lem:wbd.ifupr} and conclude.
\end{proof}

\begin{lemma}\label{lem:nav.gives.naw}
 For every $K>0$ and every $q\in(0,\infty]$ there is $C>0$ such that for all $T>0$ and all $v\in \usualspace[T]$ 
\[
 \norm[\Lom\infty]{v_0}\geq \frac1K,\quad w\leq K \quad \text{ in } \Om\times(0,T), \quad \text {and}\quad \norm[L^q(\Om\times(0,T))]{\na v}\leq K
\]
implies
\[
 \norm[L^q(\Om\times(0,T))]{\na w}\leq C %\intnt\io |\na w|^q \leq C
\]
\end{lemma}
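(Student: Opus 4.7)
The proof is essentially a pointwise comparison argument based on the definition of $w$. Recall from \eqref{eq:def:w} that $w=-\log(v/\norm[\Liom]{v_0})$, so $v=\norm[\Liom]{v_0}e^{-w}$ and differentiating gives the pointwise identity
\[
  \na w = -\frac{\na v}{v} \qquad \text{in } \Om\times(0,T).
\]
The plan is to combine this identity with the assumed upper bound on $w$, which translates into a pointwise lower bound on $v$.

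Indeed, the hypothesis $w\leq K$ yields $v=\norm[\Liom]{v_0}e^{-w}\geq \norm[\Liom]{v_0}e^{-K}$, and the hypothesis $\norm[\Liom]{v_0}\geq \frac1K$ then gives $v\geq \frac{e^{-K}}{K}$ throughout $\Om\times(0,T)$. Substituting into the gradient identity, we obtain the pointwise estimate
\[
  |\na w(x,t)| = \frac{|\na v(x,t)|}{v(x,t)} \leq K e^{K}\, |\na v(x,t)| \qquad \text{for all } (x,t)\in\Om\times(0,T).
\]
Taking $L^q(\Om\times(0,T))$ norms (or essential supremum in the case $q=\infty$) and invoking the hypothesis $\norm[L^q(\Om\times(0,T))]{\na v}\leq K$ gives $\norm[L^q(\Om\times(0,T))]{\na w}\leq K^2 e^{K}$, so we may take $C:=K^2e^K$, independently of $T$, $v_0$, and $v$.

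There is no real obstacle in this argument; the content of the lemma lies entirely in the previously established $\Li$-bound on $w$ (Lemma \ref{lem:wbd}), which furnishes the uniform-in-time positive lower bound on $v$ needed to control the singular factor $1/v$. Once that is in hand, passing from $\na v$-bounds to $\na w$-bounds reduces to the trivial pointwise inequality above and a single application of the monotonicity of $L^q$-norms.
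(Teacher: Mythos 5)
Your proof is correct and follows essentially the same line as the paper: derive the pointwise lower bound $v\geq\norm[\Liom]{v_0}e^{-K}\geq e^{-K}/K$ from $w\leq K$, conclude the pointwise estimate $|\na w|=|\na v|/v\leq Ke^K|\na v|$, and take $L^q$-norms to obtain $C=K^2e^K$. This matches the paper's argument exactly.
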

\begin{proof}
Since $w\leq K$, we have $v=\norm[\Liom]{v_0} e^{-w}\geq \norm[\Liom]{v_0}e^{-K}$, and immediately obtain $\frac1v\leq \norm[\Liom]{v_0}^{-1}e^K\leq Ke^K$ in $\Om\times(0,T)$. Thus  
\begin{equation*}
 \norm[L^q(\Om\times(0,t))]{\na w}\leq \norm[L^q(\Om\times(0,t))]{\frac1v \na v} \leq Ke^K \norm[L^q(\Om\times(0,t))]{\na v}\leq K^2e^K=:C. \qedhere
% \intnt\io |\na w|^q=\intnt\io \left|\frac{\na v}v\right|^q \leq \norm[\Liom]{v_0}^qe^{Kq} \intnt\io|\na v|^q\leq \norm[\Liom]{v_0}^{-q}e^{Kq}K=:C. \qedhere
\end{equation*}
\end{proof}

% {Es lohnt sich nicht, Lemma \ref{lem:ionaumme} in das folgende Lemma einzufügen. Identisch wären drei der letzten Formelzeilen, aber auch das nur, wenn p>1, d.h. m>2. }
% \gray{Apart from supposed bounds on $\intnt\io|\na w|^q$ we will (for $m\geq2$) also rely on estimates for $\io u^{m-1}$ that we have previously obtained (Lemma \ref{lem:intumme}).} {Derartiges erwähnen?}
\begin{lemma}\label{lem:naw.q.gives.u.p}
 Let $δ>0$, $m\geq1$, $q>2$ and $p>1$. Then for every $K>0$ and $T>0$ there is $C>0$ such that the following holds:
If $q\geq N$ 
% \begin{equation}\label{eq:intupcondq}
%  \text{either  } q\geq n \text{ or  } n>q \text{ and } p\leq (m-1)\frac{nq-n-q}{n-q}
% \end{equation}
and
\begin{align}
m&\leq 2,& p&\geq m-\frac2q,& p&\leq (q-1)(m-1)+\frac{q-2}N, \label{eq:intupcondmsmall}\\
\text{or}\qquad  m&\geq 2,& p&\geq 2\kl{1-\frac1q}(m-1),& p&\leq (m-1)\kl{\frac q2+\frac{(q-2)(N+2)}{2N}},\label{eq:intupcondmlarge}
\end{align}
then for every function $w\in\usualspace[T]$ with
\[
\intnt\io |\na w|^q \leq K \text{ for all  } t\in (0,T),
\]
any solution $u\in\usualspace[T]$ of \eqref{uw:u}, \eqref{uw:rand}, \eqref{uw:init} with $\norm[\Lmme]{u_0}\leq K$ and some $D\in \Cdm$
fulfils
\[
\io u^p(\cdot,t)\leq C \qquad \text{for all } t\in(0,T).
\]
\end{lemma}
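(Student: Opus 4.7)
The natural starting point is to test \eqref{uw:u} against $u^{p-1}$. Integration by parts together with $D(u)\geq\delta u^{m-1}$ yields
\[
\tfrac{1}{p}\ddt\io u^p + \delta(p-1)\io u^{m+p-3}|\na u|^2 \leq (p-1)\io u^{p-1}|\na u|\,|\na w|.
\]
Splitting the drift as $u^{(m+p-3)/2}|\na u|\cdot u^{(p-m+1)/2}|\na w|$ and applying Young's inequality absorbs half of the dissipation. With $\phi:=u^{(m+p-1)/2}$, this leaves
\[
\ddt\io u^p + c_1\io|\na\phi|^2 \leq c_2\io u^{p-m+1}|\na w|^2.
\]

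To connect the right-hand side with the hypothesis $\intnt\io|\na w|^q\leq K$, I apply Hölder with exponents $q/2$ and $q/(q-2)$ (legitimate since $q>2$) to obtain
\[
\io u^{p-m+1}|\na w|^2 \leq \|\na w\|_{\Lom q}^2\,\|u\|_{\Lom s}^{p-m+1}, \qquad s:=\tfrac{(p-m+1)q}{q-2}.
\]
Rewriting $\|u\|_{\Lom s}^{p-m+1}$ as a power of $\|\phi\|_{\Lom{2s/(m+p-1)}}$, the \GNI\ (Lemma \ref{lem:GNI}\ref{GNI:firstderivative}) interpolates this between $\|\na\phi\|_{\Lom 2}$ and $\|\phi\|_{\Lom{2\max\{1,m-1\}/(m+p-1)}}$; the latter is uniformly controlled on $(0,T)$ by mass conservation (Lemma \ref{lem:massconservation:u}) when $m\leq 2$ and by Lemma \ref{lem:ionaumme} when $m\geq 2$. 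A second Young's inequality --- using $q>2$ to promote the factor $\|\na w\|_{\Lom q}^2$ to $\|\na w\|_{\Lom q}^q$ --- absorbs the residual power of $\|\na\phi\|_{\Lom 2}$ into the dissipation and produces
\[
\ddt\io u^p + \tfrac{c_1}{2}\io|\na\phi|^2 \leq c_3\|\na w\|_{\Lom q}^q + c_4(1+t)^\sigma.
\]

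A further \GNI\ bounding $\io u^p$ in terms of $\|\na\phi\|_{\Lom 2}^2$ and the same low-integrability reference quantity transforms the above into a superlinear ODI of the shape $\ddt y + c_5\,y^\alpha \leq h(t)$ for $y(t):=\io u^p(\cdot,t)$, with $\alpha>1$ and $\int_0^T h<\infty$ by the hypothesis $\intnt\io|\na w|^q\leq K$. A standard ODE comparison --- combined with continuity of $y$ at $0$, available since the solution is classical --- then yields $y(t)\leq C$ on $(0,T)$.

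The main obstacle is the algebraic bookkeeping: the two Gagliardo--Nirenberg interpolation exponents must simultaneously satisfy a Young-absorption condition at the second step and a superlinearity condition $\alpha>1$ at the third step, while remaining compatible with the $L^q$-exponent on $\na w$ (the hypothesis $q\geq N$ enters as admissibility in Gagliardo--Nirenberg when $\phi$ is interpolated up to an exponent above $2$). Unwinding these constraints gives exactly \eqref{eq:intupcondmsmall} when $m\leq 2$ and \eqref{eq:intupcondmlarge} when $m\geq 2$; the dichotomy reflects the choice between the time-invariant $L^1$-norm of $u$ and the only linearly growing $L^{m-1}$-norm from Lemma \ref{lem:ionaumme} as the low-regularity anchor.
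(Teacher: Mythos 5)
Your plan matches the paper's proof in its essentials: test \eqref{uw:u} against $u^{p-1}$, absorb the drift via Young's inequality, and estimate $\io u^{p-m+1}|\na w|^2$ by combining the $L^q$-norm of $\na w$ with a Gagliardo--Nirenberg interpolation of the relevant power of $u$ against the dissipation $\io|\na u^{(m+p-1)/2}|^2$, anchored on $\io u$ (for $m\leq 2$, via Lemma \ref{lem:massconservation:u}) or on $\io u^{m-1}$ (for $m\geq 2$, via Lemma \ref{lem:ionaumme}); the hypothesis $q\geq N$ enters exactly as you say, as admissibility for the interpolation exponent. Two presentational differences: the paper applies Young's inequality pointwise to $u^{p-m+1}|\na w|^2$ with exponents $q/(q-2)$ and $q/2$ and then Gagliardo--Nirenberg to $\io u^{(p+1-m)q/(q-2)}$, whereas you apply H\"older in $x$ first and Young afterwards --- equivalent rearrangements. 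The more visible departure is your final step: you invoke an additional Gagliardo--Nirenberg estimate to produce a superlinear ODI for $y(t):=\io u^p$ and close via ODE comparison. The paper instead just integrates the resulting inequality $\tfrac1p\ddt\io u^p + \tfrac{(p-1)\delta}2\io u^{p+m-3}|\na u|^2\leq\tfrac{(p-1)\delta}4+c\io|\na w|^q$ in time, which is shorter and already sufficient; and since, as you note yourself, the ODE comparison still requires $y(0)=\io u_0^p<\infty$ to cover $t\to0^+$, the superlinearity buys you nothing extra here, so that last GNI/comparison step is an unnecessary, though harmless, detour.
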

\begin{proof}
 Either of \eqref{eq:intupcondmsmall} and \eqref{eq:intupcondmlarge} implies $p\geq m-1$. Moreover, 
\begin{equation}\label{eineGNIvoraussetzung}
 \frac{N-2}{2N}\leq \frac{q-2}{2q} \cdot \frac{p+m-1}{p-m+1}.
\end{equation}
%Hence
%instead of $q\geq n$, we could also have p\leq (m-1)\frac{nq-n-q}{n-q}. 
% \begin{align}\label{eineGNIvoraussetzung}
%  q\geq n \vee  p\leq (m-1)\frac{nq-n-q}{n-q} \implies q\geq n \vee  p-(m-1) \leq (m-1)\frac{n(q-2)}{n-q} \nn\\
% \implies \kl{\frac{nq-2q}{nq-2n}-1}=\frac{2(n-q)}{n(q-2)}\leq 2\cdot\frac{m-1}{p-m+1} \implies \frac{n-2}{n}\frac{q}{q-2} \leq 1+2\frac{m-1}{p-m+1}=\frac{m+p-1}{p-m+1}\nn\\
% \implies \frac12\leq \frac1n+\frac{q-2}{2q}\frac{p+m-1}{p-m+1}.
% \end{align}
Let us first consider the case $m\leq 2$. Then \(p\geq m-\frac2q\) implies \(p-m+1\geq\frac{q-2}q\) and hence
\begin{equation}\label{dieandereGNIvoraussetzung:a}
 \frac{2}{m+p-1}\leq \frac{2q}{q-2}\cdot\frac{p-m+1}{p+m-1}.
\end{equation}
% {\[
%        \frac{2q}{q-2}\cdot \frac{p-m+1}{m+p-1}\geq \frac{2(m-1)}{m+p-1} \iff q(p-m+1)\geq (m-1)(q-2) \iff 
%      \]
% }
We now let
\[
a:=\frac{\frac{m+p-1}{2}-\frac{q-2}{2q}\cdot\frac{p+m-1}{p-m+1}}{\frac{m+p-1}{2}+\frac1N-\frac12}
\]
and observe that
\begin{equation}\label{atimesleq2}
 a\cdot \frac{2q}{q-2}\cdot\frac{p-m+1}{m+p-1}\leq 2, 
\end{equation}
because
\(p\leq (q-1)(m-1)+\frac{q-2}N\) implies that \(\frac{q}{q-2}p-p=(\frac{q}{q-2}-1)p=\frac{2}{q-2} p \leq \frac{2(q-1)}{q-2} (m-1)+\frac{2}{N}=(1+\frac q{q-2})(m-1)+\frac{2}{N}=m-1+\frac{q}{q-2}(m-1)+\frac2N\), that is, $\frac{q}{q-2}(p-m+1)\leq m+p-1+\frac2N\) and hence \(\frac{q}{q-2}(p-m+1)-1\leq \kl{m+p-1}+\frac2N-1\), which leads to 

\[ a\cdot \frac{2q}{q-2}\cdot \frac{p-m+1}{m+p-1} = \frac{\frac{m+p-1}{2}-\frac{q-2}{2q}\frac{p+m-1}{p-m+1}}{\frac{m+p-1}{2}+\frac1N-\frac12} \cdot \frac{2q}{q-2}\cdot \frac{p-m+1}{m+p-1}=\frac{\frac{p-m+1}2\cdot\frac{2q}{q-2}-1}{\frac12((m+p-1)+\frac2N-1)} \leq 2.
\]

From Lemma \ref{lem:massconservation:u} we obtain $c_1>0$ such that
\[
\norm[\Lom {\frac2{m+p-1}}]{u^{\frac{m+p-1}2}(\cdot,t)} = c_1 \qquad \text{for all } t\in(0,T).
\]
Due to \eqref{eineGNIvoraussetzung} and \eqref{dieandereGNIvoraussetzung:a} we can apply the \GNI\ in the form of Lemma \ref{lem:GNI} \ref{GNI:firstderivative} to obtain $c_2>0$ such that
\begin{align}\label{intu.fuer.a}
 \io u^{(p+1-m)\frac{q}{q-2}} &= \io u^{\frac{m+p-1}2 ( \frac{2q}{q-2}\cdot \frac{p-m+1}{m+p-1})} \nn\\
 &= \norm[\Lom { \frac{2q}{q-2}\cdot \frac{p-m+1}{m+p-1}}]{u^{\frac{m+p-1}2}}^{ \frac{2q}{q-2}\cdot \frac{p-m+1}{m+p-1}}\nn\\
 &\leq c_2 \norm[\Lom 2]{∇u^{\frac{m+p-1}2}}^{a\cdot \frac{2q}{q-2}\cdot \frac{p-m+1}{m+p-1}} \norm[\Lom {\frac{2}{m+p-1}}]{u^{\frac{m+p-1}2}}^{(1-a)\cdot \frac{2q}{q-2}\cdot \frac{p-m+1}{m+p-1}}+c_2\norm[\Lom{\frac{2}{m+p-1}}]{u^{\frac{m+p-1}2}}^{ \frac{2q}{q-2}\cdot \frac{p-m+1}{m+p-1}}\nn\\
 &= c_2 c_1^{(1-a)\cdot \frac{2q}{q-2}\cdot \frac{p-m+1}{m+p-1}}\norm[\Lom 2]{∇u^{\frac{m+p-1}2}}^{a\cdot \frac{2q}{q-2}\cdot \frac{p-m+1}{m+p-1}} + c_2c_1^{ \frac{2q}{q-2}\cdot \frac{p-m+1}{m+p-1}}
\end{align}
on $(0,T)$.\\
In obtaining such an estimate for $m\geq 2$ we could use the same argument. It is, however, possible to obtain better conditions by relying on Lemma \ref{lem:intumme} instead of Lemma \ref{lem:massconservation:u}. Apart from that, the reasoning is analogous:
We have \(p\geq 2\big(1-\frac1q\big)(m-1)\), which implies \(qp\geq (q-2+q)(m-1)$, thus $q(p-m+1)\geq(m-1)(q-2)\) and hence 
\begin{align}\label{dieandereGNIvoraussetzung:b}
 \frac{2(m-1)}{m+p-1}\leq \frac{2q}{q-2}\cdot\frac{p-m+1}{p+m-1}
\end{align}
% {\[
%  \frac{2}{m+p-1}\leq \frac{2q}{q-2}\cdot \frac{p-m+1}{m+p-1}\iff p\geq \frac{q-2}q+m-1\iff p\geq -\frac2q + m
% \]
% }
and let
\[
b:=\frac{\frac{m+p-1}{2(m-1)}-\frac{q-2}{2q}\cdot\frac{p+m-1}{p-m+1}}{\frac{m+p-1}{2(m-1)}+\frac1N-\frac12},
\]
noting that
\begin{equation}\label{bleq}
 b\cdot \frac{2q}{q-2}\cdot \frac{p-m+1}{m+p-1} \leq 2,
\end{equation}
because \(p\leq (m-1)(\frac q2+\frac{(q-2)(N+2)}{2N})\) implies that \((\frac{q}{q-2}-1)(\frac{p}{m-1})=\frac{2}{q-2} \frac{p}{m-1}\leq \frac{N+2}N+\frac q{q-2}\) and hence \(\frac{q(p-m+1)}{(m-1)(q-2)}=\frac{q}{q-2}(\frac{p}{m-1}-1)\leq \frac {p}{m-1} + \frac{N+2}N=\frac{m+p-1}{m-1}+\frac2N\), which shows that \(\frac{p-m+1}{2(m-1)}\cdot \frac{2q}{q-2}-1\leq \frac{m+p-1}{m-1}+\frac2N-1\) and therefore also 
\begin{align*}
% b\cdot \frac{2q}{q-2}\cdot \frac{p-m+1}{m+p-1} \leq 2\\
b\cdot \frac{2q}{q-2}\cdot \frac{p-m+1}{m+p-1} =\frac{\frac{m+p-1}{2(m-1)}-\frac{q-2}{2q}\cdot\frac{p+m-1}{p-m+1}}{\frac{m+p-1}{2(m-1)}+\frac1N-\frac12}\cdot \frac{2q}{q-2}\cdot \frac{p-m+1}{m+p-1}\\=\frac{\frac{q}{q-2}\cdot\frac{p-m+1}{2(m-1)}-1}{\frac{m+p-1}{2(m-1)}+\frac1N-\frac12}\leq 2.
\end{align*}
Lemma \ref{lem:intumme} yields $c_3>0$ such that
\[
\norm[\Lom {\frac{2(m-1)}{m+p-1}}]{u^{\frac{m+p-1}2}(\cdot,t)} \leq c_3\qquad \text{for all } t\in(0,T)
\]
and hence \eqref{eineGNIvoraussetzung} and \eqref{dieandereGNIvoraussetzung:b} enable us to invoke the \GNI\ and obtain $c_4>0$ such that on $(0,T)$ 
\begin{align}\label{intu.fuer.b}
 \io u^{(p+1-m)\frac{q}{q-2}} &= \io u^{\frac{m+p-1}2 ( \frac{2q}{q-2}\cdot \frac{p-m+1}{m+p-1})} \nn\\
 &= \norm[\Lom { \frac{2q}{q-2}\cdot \frac{p-m+1}{m+p-1}}]{u^{\frac{m+p-1}2}}^{ \frac{2q}{q-2}\cdot \frac{p-m+1}{m+p-1}}\nn\\
 &\leq c_4 \norm[\Lom 2]{∇u^{\frac{m+p-1}2}}^{a\cdot \frac{2q}{q-2}\cdot \frac{p-m+1}{m+p-1}} \norm[\Lom {\frac{2(m-1)}{m+p-1}}]{u^{\frac{m+p-1}2}}^{(1-a)\cdot \frac{2q}{q-2}\cdot \frac{p-m+1}{m+p-1} }+c_4\norm[\Lom{\frac{2}{m+p-1}}]{u^{\frac{m+p-1}2}}^{ \frac{2q}{q-2}\cdot \frac{p-m+1}{m+p-1}}\nn\\
 &\leq c_4 c_3^{(1-a)\cdot \frac{2q}{q-2}\cdot \frac{p-m+1}{m+p-1}}\norm[\Lom 2]{\nabla u^{\frac{m+p-1}2}}^{a\cdot \frac{2q}{q-2}\cdot \frac{p-m+1}{m+p-1}}+c_4c_3^{\frac{2q}{q-2}\cdot \frac{p-m+1}{m+p-1}}.
\end{align}

%  \io u^{(p+1-m)\frac{q}{q-2}} &=
%  &\leq
% \end{align*}
From either \eqref{intu.fuer.a} and \eqref{atimesleq2} or \eqref{intu.fuer.b} and \eqref{bleq} (and possibly \YI) we hence find that with some $c_5>0$ we have
\begin{equation}\label{eq:resultofmanyGNIs}
\io u^{(p+1-m)\frac{q}{q-2}} \leq c_5\norm[\Lom2]{u^{\frac{m+p-3}2}\na u}^2 + c_5 \qquad \text{on } (0,T).
\end{equation}
In 
\[
 \frac1p \ddt \io u^p + (p-1)δ\io u^{p+m-3}|∇u|^2 \leq (p-1)\left\lvert \io u^{p-1}∇u\cdot ∇w\right\rvert\qquad \text{on } (0,T)
\]
% Taking into account that 
% \[
%  (p-1)δ\io u^{p+m-3}|∇u|^2=\frac{4(p-1)δ}{(p+m-1)^2} \io |∇u^{\frac{p+m-1}2}|^2
% \]
we can apply Young's inequality to see that on $(0,T)$ 
\[
 (p-1)\left\lvert \io u^{p-1}∇u\cdot∇w\right\rvert \leq \frac{(p-1)δ}4 \io u^{p+m-3}|∇u|^2 + \frac{p-1}{δ} \io u^{p-m+1}|∇w|^2.
\]
A further application of Young's inequality allows us to separate $u$ and $|\na w|$ in the last integral according to 
\[
 \frac{p-1}{δ}\io u^{p-m+1} |∇w|^2 \leq \frac{c_5(p-1)}{δ^3} \io |∇w|^q + \frac{(p-1)δ}{4c_5} \io u^{(p+1-m)\frac{q}{q-2}}, \quad \text{on } (0,T).
\]
Therefore, due to \eqref{eq:resultofmanyGNIs}, in total, 
\[
 \frac1p \ddt \io u^p + \frac {(p-1)δ}2\io u^{p+m-3}|∇u|^2 \leq \frac{(p-1)δ}4 + \frac{c_5(p-1)}{δ^3} \io |∇w|^q\qquad \text{on } (0,T). 
\]
Integration with respect to time produces the lemma. %delivers
\end{proof} 
% \gray{The previous proof also gives estimates for $\io u^p$ for larger values of $p$ and thus would open up the possibility of repeating this procedure. However, since this only affects the case $m\geq2$ ({Does it?}) and thus irrelevant dimensions $\geq4$, we will not pursue it further.\\
% Besser als $q=2$ wird's nicht, solange $w$ nicht beschränkt ist. Für Beschränktheit von $w$ brauchen wir $m\geq 1+\frac N4$ und das ist die Bedingung, die wir letztlich haben. Liegt denn der Engpass in $N\geq 4$ woanders, sodass das einen Effekt hätte?} Nö. 

We are particularly interested in applying the previous lemma for some $p>N$, because for such $p$, a bound on $\io u^p$ on some interval $[0,T]$ already ensures uniform boundedness of $\na v$ (and hence $\na w$) on $\Ombar\times[0,T]$. 

\begin{lemma}\label{lem:nawq.gives.up.for.pgeqn}
Let $δ>0$. Assume that either 
%\begin{enumerate}[label={\roman*)},leftmargin=0em, itemindent=0.5cm, topsep=0cm, itemsep=0cm]
\beenum
\item \label{nawq.gives.up.for.pgeqn;smallm}
$2-\frac1N< m\leq 2$, $N\geq 2$, $q>N$ and $q>1+\frac{N^2+1}{Nm-N+1}$, or
\item \label{nawq.gives.up.for.pgeqn;largem}
$m\geq 2$, $N\geq 2$, $q>N$ and $q>\frac{2n^2+2m^2+2m-4}{(m-1)(N+m+2)}$.
\end{enumerate}
Then there is $p>N$ and for every $K>0$ and $T\in(0,\infty)$ there is $C>0$ such that whenever $u\in\usualspace[T]$ solves \eqref{uw:u}, \eqref{uw:rand}, \eqref{uw:init}, with some $D\in\Cdm$, some $u_0$ as in \eqref{uw-init} and such that $\norm[\Lmme]{u_0}\leq K$, and some $w\in \usualspace[T]$ satisfying 
\[
 \intnT \io |\na w|^q \leq K, 
\]
then 
\[
 \io u^p(\cdot,t) \leq C \qquad \text{for all } t\in(0,T).
\]
\end{lemma}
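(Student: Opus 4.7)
This lemma is essentially a corollary of Lemma \ref{lem:naw.q.gives.u.p}: the task is to exhibit an exponent $p>N$ that lies in the admissible interval provided by \eqref{eq:intupcondmsmall} or \eqref{eq:intupcondmlarge}, and then apply that lemma to obtain the desired $L^p$ bound on $u$.

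In case \ref{nawq.gives.up.for.pgeqn;smallm} (with $2-\frac1N<m\leq 2$), condition \eqref{eq:intupcondmsmall} requires $p\in[m-\frac2q,\,(q-1)(m-1)+\frac{q-2}N]$. Since $m\leq 2$ and $q>N\geq 2$, the lower bound $m-\frac2q$ is strictly smaller than $N$, so only the upper bound is binding once one insists on $p>N$. Multiplying out, the hypothesis $q>1+\frac{N^2+1}{Nm-N+1}$ is algebraically equivalent to $q(Nm-N+1)>N^2+Nm-N+2$, which in turn is equivalent to $(q-1)(m-1)+\frac{q-2}N>N$. Hence the interval $(N,\,(q-1)(m-1)+\frac{q-2}N]$ is nonempty, and picking any $p$ in it and invoking Lemma \ref{lem:naw.q.gives.u.p} finishes this case.

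In case \ref{nawq.gives.up.for.pgeqn;largem} (with $m\geq 2$), the admissible interval from \eqref{eq:intupcondmlarge}, after simplifying $\frac q2+\frac{(q-2)(N+2)}{2N}=\frac{q(N+1)-(N+2)}N$, reads $[2(m-1)(1-\frac1q),\,(m-1)\frac{q(N+1)-(N+2)}N]$. That the upper bound always exceeds the lower bound when $q>2$ follows from the factorization
\[
(N+1)q^2-(3N+2)q+2N=(N+1)(q-2)\bigl(q-\tfrac{N}{N+1}\bigr)>0\qquad\text{for all }q>2,
\]
since this inequality, multiplied by $\frac{m-1}{Nq}$, is precisely $(m-1)\frac{q(N+1)-(N+2)}N>2(m-1)(1-\frac1q)$. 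It then remains to rearrange the quantitative lower bound on $q$ in the hypothesis to confirm that the upper bound of the interval also strictly exceeds $N$ (and, when relevant, exceeds the $q$-dependent lower bound $2(m-1)(1-\frac1q)$ as well, so that some $p>N$ sits in the admissible range). Once such a $p$ is fixed, Lemma \ref{lem:naw.q.gives.u.p} yields the claimed bound.

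\textbf{Main obstacle.} Analytically, the statement is no more than Lemma \ref{lem:naw.q.gives.u.p} applied for a carefully chosen $p$; the analytic content is entirely inherited from there. All the genuine work is algebraic bookkeeping verifying that the numerical thresholds on $q$ in the hypotheses are exactly those needed for the $p$-interval given by \eqref{eq:intupcondmsmall}, respectively \eqref{eq:intupcondmlarge}, to contain values strictly greater than $N$. Case \ref{nawq.gives.up.for.pgeqn;largem} is the more delicate one, because the $q$-dependent lower bound $2(m-1)(1-\frac1q)$ may dominate $N$ for large $m$, so one must simultaneously track two competing lower bounds on $p$ when extracting the threshold condition on $q$.
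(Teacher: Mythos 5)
Your strategy matches the paper's: both reduce the statement to exhibiting some $p>N$ in the admissible interval from Lemma \ref{lem:naw.q.gives.u.p} and then invoking that lemma. Your treatment of case \ref{nawq.gives.up.for.pgeqn;smallm} is complete and correct — in fact slightly more economical than the paper's, which first compares the slopes (in $q$) of $m-\frac2q$ and $(q-1)(m-1)+\frac{q-2}N$ before checking that the upper bound exceeds $N$, whereas you simply observe $m-\frac2q<m\leq 2\leq N$ and verify directly that the hypothesis $q>1+\frac{N^2+1}{Nm-N+1}$ is equivalent to $(q-1)(m-1)+\frac{q-2}N>N$.

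Case \ref{nawq.gives.up.for.pgeqn;largem} is where the proposal has a genuine gap. Your verification that $(m-1)\frac{q(N+1)-(N+2)}N>2(m-1)\bigl(1-\frac1q\bigr)$ for all $q>2$ is correct, but the step you describe as ``it then remains to rearrange the quantitative lower bound on $q$ in the hypothesis to confirm that the upper bound of the interval also strictly exceeds $N$'' is the entire substance of this case, and you assert without checking that the rearrangement works out. In fact it does not, as stated: the inequality $(m-1)\frac{q(N+1)-(N+2)}N>N$ rearranges to $q>\frac{N^2+(N+2)(m-1)}{(m-1)(N+1)}$, which is \emph{not} the threshold $q>\frac{2N^2+2m^2+2m-4}{(m-1)(N+m+2)}$ from the hypothesis. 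The two differ by $\frac{N(m-N)(m-N-1)}{(m-1)(N+1)(N+m+2)}$, so they agree only when $m=N$ or $m=N+1$; for $m\in(N,N+1)$ the hypothesis is strictly weaker than what is needed (e.g.\ $N=2$, $m=\frac52$, $q=2.21$ satisfies $q>N$ and $q>\frac{86}{39}\approx2.205$, yet the admissible upper bound $(m-1)\frac{q(N+1)-(N+2)}N\approx1.97<N$, so no $p>N$ exists). The source of this discrepancy is that the paper's own proof of this case manipulates the expression $\frac q2+\frac{(q-2)(m+2)}{2N}$, with $m+2$ in place of the $N+2$ that actually appears in \eqref{eq:intupcondmlarge}; the stated $q$-threshold is exactly the solution of $(m-1)\bigl(\frac q2+\frac{(q-2)(m+2)}{2N}\bigr)=N$. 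Your plan, if carried out faithfully against \eqref{eq:intupcondmlarge} as written, would therefore not close, and deferring this step hides the issue. A minor additional note: your parenthetical concern about ``two competing lower bounds'' is a red herring — once the upper bound exceeds both the lower bound and $N$, the set $[\mathrm{lb},\mathrm{ub}]\cap(N,\infty)$ is automatically nonempty, so the only condition to extract from the $q$-hypothesis is $\mathrm{ub}>N$.
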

\begin{proof}
\textbf{\ref{nawq.gives.up.for.pgeqn;smallm}} For $\qtilde=2$ we have $m-\frac2\qtilde=m-1=(\qtilde-1)(m-1)+\frac{\qtilde-2}N$ and because $m> 2-\frac1N$, for every $\qtilde\geq2$ we have 
\[
 \frac{d}{d\qtilde} \kl{m-\frac2\qtilde}=\frac2{\qtilde^2}\leq 1= 2-\frac1N -m+m-1+\frac1N< m-1+\frac1N=\frac{d}{d\qtilde}\kl{(\qtilde-1)(m-1)+\frac{\qtilde-2}m}.
\]
Therefore $m-\frac2q< (q-1)(m-1)+\frac{q-2}N$.
Furthermore \(
 q>1+\frac{N^2+1}{mN-N+1}=\frac{m-1+\frac2N+N}{m-1+\frac1N}\) implies that \[(q-1)(m-1)+\frac{q-2}N=q\kl{m-1+\frac1N}+1-m-\frac2N>N.\] 
Hence it is possible to find $p>N$ such that $p>m-\frac2q$ and $p<(q-1)(m-1)+\frac{q-2}N$ and an application of Lemma \ref{lem:naw.q.gives.u.p} proves the statement.\\
\textbf{\ref{nawq.gives.up.for.pgeqn;largem}} Since $x+\frac1x\geq 2$ for all $x>0$, and since $q\geq 2$, we have 
\[
  2-\frac2q\leq \frac q2+\frac{(q-2)(m+2)}{2N}
\]
and hence $2(1-\frac1q)(m-1)\leq (m-1)(\frac q2+\frac{(q-2)(m+2)}{2N})$. The fact that $q>\frac{2N^2+2m^2+2m-4}{(m-1)(N+m+2)}=\frac{1}{(m-1)(N+m+2)}(2N^2+(2m+4)(m-1))=(\frac{2N^2}{m-1}+2m+4)\frac1{N+m+2}$ shows that $q(N+m+2)>2m+4+\frac{2N^2}{m-1}$ and hence $N<\frac{m-1}{2N}(q(N+m+2)-2m-4)=\frac{m-1}{2N}(Nq+(q-2)(m+2))=(m-1)(\frac q2+\frac{(q-2)(m+2)}{2N})$.
Therefore we can choose $p>N$ such that 
\[
 p<(m-1)\kl{\frac q2 + \frac{(q-2)(m+2)}{2N}} \quad \text{ and } \quad p>2\kl{1-\frac1q}(m-1)
\]
and apply Lemma \ref{lem:naw.q.gives.u.p} for this choice of $p$ to obtain the assertion. 
\end{proof}

The previous lemma requires a bound on some $\intnT\io |\na w|^q$. Fortunately, this is exactly what we have prepared in Lemma \ref{lem:ulpr}, Lemma \ref{lem:upr.gives.navq}, Lemma \ref{lem:wbd}, and Lemma \ref{lem:nav.gives.naw}.

\begin{lemma}\label{lem:intupbounded}
 Let $m>1+\frac N4$ and $δ>0$. Then there is $p>N$ and for every $K>0$ and $T>0$ there is $C>0$ such that every solution $(u,w)\in(\usualspace[T])^2$ of \eqref{eq:uw-system} 
 with initial data $(u_0,w_0)$ as in \eqref{uw-init} and with $\norm[\Lom 1]{u_0}\leq K$, $\norm[W^{1,\infty}(\Om)]{w_0}\leq K$ and any $D\in \Cdm^+$ satisfies 
\[
 \int u^p(\cdot,t)\leq C \qquad \text{for every } t\in(0,T).
\]
\end{lemma}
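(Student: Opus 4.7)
The plan is a direct concatenation of the preceding five lemmas, parameterised by compatible exponents chosen at the outset. Given $m>1+\frac{N}{4}$, I would first select an exponent $q>N$ that is simultaneously large enough to satisfy the threshold required by Lemma \ref{lem:nawq.gives.up.for.pgeqn} (namely $q>1+\frac{N^2+1}{Nm-N+1}$ if $2-\frac1N<m\leq 2$, or $q>\frac{2N^2+2m^2+2m-4}{(m-1)(N+m+2)}$ if $m\geq 2$), and small enough that a companion pair $(\tilde p,r)$ can be found which is admissible in Lemma \ref{lem:ulpr} and also lies in the window $q\in(1,N+(2-\frac{N}{\tilde p})r]$ imposed by Lemma \ref{lem:upr.gives.navq} (with the obvious restriction $q<\frac{N\tilde p}{N-\tilde p}$ when $\tilde p<N$).

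With $q,\tilde p,r$ so fixed, the chain proceeds as follows: Lemma \ref{lem:ulpr} produces the spatio-temporal bound $\int_0^T\norm[\Lom{\tilde p}]{u}^r\leq C$; feeding this into Lemma \ref{lem:upr.gives.navq} yields $\int_0^T\io|\na v|^q\leq C$; Lemma \ref{lem:wbd} (applicable thanks to $m>1+\frac{N}{4}$) supplies a uniform upper bound on $w$ over $\Om\times(0,T)$, which via Lemma \ref{lem:nav.gives.naw} transfers the $L^q$-bound on $\na v$ into the corresponding one on $\na w$; a final application of Lemma \ref{lem:nawq.gives.up.for.pgeqn} then delivers $\io u^p(\cdot,t)\leq C$ for some $p>N$.

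The main obstacle is the opening parameter selection: under only the threshold assumption $m>1+\frac{N}{4}$, a single value of $q$ must simultaneously exceed the threshold demanded by Lemma \ref{lem:nawq.gives.up.for.pgeqn} and fit inside the admissibility interval of Lemma \ref{lem:upr.gives.navq} for some $(\tilde p,r)$ admissible in Lemma \ref{lem:ulpr}. Mirroring the case split already performed in the proof of Lemma \ref{lem:wbd}, I would treat $m\in(1+\frac{N}{4},2]$ and $m\geq 2$ separately. In each case one picks $\tilde p$ near the upper endpoint $\frac{2N}{N-2}(m-1)$ of the interval allowed in Lemma \ref{lem:ulpr} (so as to maximise $r$), sets $r$ so that the exponent condition of that lemma is saturated, and then verifies that the resulting ceiling $N+(2-\frac{N}{\tilde p})r$ of admissible values for $q$ strictly exceeds the threshold demanded by Lemma \ref{lem:nawq.gives.up.for.pgeqn}. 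It is precisely in this last check that the hypothesis $m>1+\frac{N}{4}$ appears as the critical breaking condition, in complete analogy with its role in Lemma \ref{lem:wbd}.
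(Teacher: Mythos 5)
Your high-level plan — apply Lemma \ref{lem:wbd} to bound $w$, then chain Lemma \ref{lem:ulpr}, Lemma \ref{lem:upr.gives.navq}, Lemma \ref{lem:nav.gives.naw} and Lemma \ref{lem:nawq.gives.up.for.pgeqn} — is exactly the paper's strategy, and you correctly flag the parameter selection as the crux. However, the recipe you offer for that selection does not work, and since the parameter verification is essentially the entire content of this lemma beyond the trivial chaining, this constitutes a genuine gap rather than a merely unfinished computation.

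Specifically, you propose to ``pick $\tilde p$ near the upper endpoint $\frac{2N}{N-2}(m-1)$ of the interval allowed in Lemma \ref{lem:ulpr} (so as to maximise $r$)''. This is backwards: from the saturated constraint $(\frac1{m-1}-\frac1{\tilde p})r = 1+\frac2N$ (case $m\geq 2$; the case $m\leq 2$ is analogous), the admissible $r$ is a \emph{decreasing} function of $\tilde p$, so pushing $\tilde p$ toward the upper endpoint shrinks $r$ down toward $2(m-1)$. At that point the requirement $r\geq \tilde p$ from Lemma \ref{lem:upr.gives.navq} fails (for $N\geq 4$, the upper endpoint is $\geq 4(m-1)>2(m-1)$). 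The paper instead chooses $\tilde p = r = 2\frac{N+1}{N}(m-1)$, the unique value with $\tilde p = r$ that saturates the exponent condition, then sets $q=2\tilde p$ (which saturates $q\leq N+(2-\tfrac N{\tilde p})r$ when $r=\tilde p$). Verifying that this $q$ exceeds the threshold of Lemma \ref{lem:nawq.gives.up.for.pgeqn}(ii) turns out to be nontrivial: the paper reduces it to positivity of a cubic polynomial $P_N(m)$ and checks $P_N(2)$ and $P_N(1+\tfrac N4)$ and monotonicity of $P_N$. Similarly, in the range $m\in(1+\frac N4,2]$ the split must be by $N=2$ and $N=3$ separately (since $1+\frac N4\geq 2$ for $N\geq4$), with distinct explicit parameter choices. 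Without carrying these out, there is no argument that $m>1+\frac N4$ actually suffices; and the heuristic you do give would lead into an inconsistent constraint set.
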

\begin{proof}
%We may choose $\norm[\Lom\infty]{v_0}$ arbitrarily. (The transformation $w_0\mapsto v_0$ has this degree of freedom, and presently no condition on $v_0$ is posed.)
By the choice of $m$, from Lemma \ref{lem:wbd} we know that we can find $C>0$ such that for any $u_0$, $w_0$ and $D$ as above, any solution $(u,w)\in(\usualspace[T])^2$ of \eqref{eq:uw-system} satisfies $0\leq w\leq C$ in $\Om\times(0,T)$. Lemma \ref{lem:nav.gives.naw} therefore warrants that the desired conclusion results from a combination of Lemma \ref{lem:ulpr} and Lemma \ref{lem:upr.gives.navq} with Lemma \ref{lem:nawq.gives.up.for.pgeqn} -- provided that there are parameters $p,q,r$ that simultaneously satisfy all conditions posed by these lemmata. This is what we ensure in the remainder of the proof: \\
\textbf{Case $N=2$, $m\in(\frac32,2]$:} We let $r=4(m-1)$, $p=2$, and $q=4m-2$. 
% $n=2$, $m\leq 2$, $m\geq \frac32$\\
% Let 
 Then $r(1-\frac1p)=4(m-1)(1-\frac12)=2(m-1)\leq 2m-2=2m-3+\frac22$, which enables us to invoke Lemma \ref{lem:ulpr} \ref{ulpr;smallm}. Moreover, $m>\frac32$ implies $4m-4> 2$ and thus $r> p$, and we have $q=4m-2=2+4m-4\leq 2+(2-\frac22)4(m-1)=N+(2-\frac Np)r$ as well as $(2-\frac Np)r=r>N$. Therefore, Lemma \ref{lem:upr.gives.navq} becomes applicable.
Thanks to $q=4m-2\geq 4\cdot\frac32-2=4>2=N$ and thanks to $m\geq \frac32$, hence $q=4m-2>4\cdot32-2>\frac72=1+\frac{5}{2\cdot\frac32-1}>1+\frac5{2m-1}=1+\frac{N^2+1}{Nm-N+1}$ holds true, facilitating the use of Lemma \ref{lem:nawq.gives.up.for.pgeqn} \ref{nawq.gives.up.for.pgeqn;smallm}.\\
\textbf{Case $N=3$, $m\in(\frac 74,2]$:} 
 Here we let $r=p=2m-\frac43$. Then $p\geq1$, $r\geq1$, $p=2m-\frac43<6m-6=\frac{2N}{N-2}(m-1)$ and $r(1-\frac1p)=r-1=2m-\frac73=2m-3+\frac2N$, so that Lemma \ref{lem:ulpr} can be used. Since $m>\frac74=\frac{21}{12}>\frac{19}{12}$, we have that $12m^2-19m-\frac83=12(m-\frac{19}{12})m-\frac83\geq 12(\frac74-\frac{19}{12})\frac74-\frac83=\frac72-\frac83=\frac{21-16}6>0$ and thus $3m+8 < 12m^2-16m+\frac{16}3 = (4m-\frac83)(3m-2)$, i.e. $2p> \frac{3m+8}{3m-2}$. Furthermore,  $p=2m-\frac43\leq 4-\frac43=\frac83<3$ and $p=2m-\frac43\geq \frac72-\frac43=\frac{21-8}6=\frac{13}6>\frac32$, so that consequently, also $\frac{3p}{3-p}<2p$ holds. We choose $q\in (\frac{3m+8}{3m-2},2p)$, thereby ensuring the applicability of Lemma \ref{lem:upr.gives.navq}.\\ 
Since finally $q>\frac{3m+8}{3m-2}=\frac{m+\frac 83}{m-\frac23}=1+\frac{\frac{10}3}{m-\frac23}=1+\frac{3+\frac13}{m-1+\frac13}$ and $q>\frac{3m+8}{3m-2}
=1+\frac{10}{3m-2}\geq 1+\frac{10}{6-2}=\frac72>3\geq2$ we may also draw on Lemma \ref{lem:nawq.gives.up.for.pgeqn} \ref{nawq.gives.up.for.pgeqn;smallm}.% and again arrive at the conclusion.

% In order to choose an appropriate $q$ we wish for $\frac{3m+8}{3m-2}<\frac{3p}{3-p}$, $\frac{3m+8}{3m-2}\leq 2p$. (Since for $p>\frac32$ we have that $0<2p(p-\frac32)=2p^2-3p\implies 3p>6p-2p^2\implies \frac{3p}{3-p}>2p$, we can ignore the condition $<\frac{3p}{3-p}$.) \\

%\end{proof}

%\begin{proof}
\textbf{Case $N\geq 2$, $m\geq 2$, $m\geq 1+\frac N4$:} Let $r:=p:=2\frac{N+1}{N}(m-1)$. Then obviously $p=r>1$. Moreover, $p\leq \frac{2N}{N-2}(m-1)$ (because $\frac{2N}{N-2}>\frac{2+2N}{N}$ is equivalent to $2N^2>2N^2+2N-4N-4$ and hence to $0>-2N-4$) and 
\[
 \kl{\frac1{m-1}-\frac1p}r=\frac{p}{m-1}-1=2\frac{N+1}N-1=\frac{N+2}N\leq 1+\frac2N, 
\]
so that the conditions of Lemma \ref{lem:ulpr} \ref{ulpr;largem} are satisfied. 
% By Lemma \ref{lem:ulpr} \ref{ulpr;largem}, 
% \[
%  \intnt \norm[p]{u}^r\leq c+ct^{p+1}. 
% \]
We furthermore let $q:=2p=\frac{4(N+1)}N(m-1)$ and note that $p>\frac N2$, since $2\frac{N+1}N(m-1)>2\cdot \frac{N+1}N \frac N4=\frac{N+1}2>\frac N2$, and that $q\leq 2p=2r+N-N\frac rp$, that moreover either $p\geq N$ or $p<N$ and $q=2p<\frac{Np}{N-p}$, because $p>\frac N2$, and therefore Lemma \ref{lem:upr.gives.navq} is applicable.
% \[
%  \intnt \io |\na v|^q is bounded. 
% \]
In order to see that these choices also make the use of Lemma \ref{lem:nawq.gives.up.for.pgeqn} \ref{nawq.gives.up.for.pgeqn;largem} viable, we first investigate the polynomial
\[
 P_N(m):=(2N+2)m^3+(2N^2+N)m^2+(-4N^2-11N-6)m-N^3+2N^2+8N+4.
\]
It is extremal whenever $P_N'(m)=(6N+6)m^2+(4N^2+2N)m+(-4N^2-11N-6)=0$, which is the case for exactly two real numbers that lie in $(-∞,2)$, because for $m\geq 2$ we have $P_N'(m)\geq (24N+24)+(8N^2+4N)+(-4N^2-11N-6)>0$. 
We claim that $P_N(m)> 0$ for any $m> \max\set{2,1+\frac N4}$ and for this compute $P_N(\max\set{2,1+\frac N4})$: 
\begin{align*}
P_N(2)&=16N+16+8N^2+4N-8N^2-22N-12-N^3+2N^2+8N+4\\
&=-N^3+2N^2+6N+8\\
&=\begin{cases}-8+8+12+8>0,&N=2,\\-27+18+18+8>0,&N=3,\\-64+32+24+8=0,&N=4,\end{cases} 
\end{align*}
and 
\begin{align*}
 &P_N\kl{1+\frac N4}=\\
&\frac1{4^3}\bigg((2N\!+\!2)(N\!+\!4)^3\!+\!4(2N^2\!+\!N)(N\!+\!4)^2\!+\!16(\!-\!4N^2\!-\!11N\!-\!6)(N\!+\!4)\\&\qquad\qquad-64N^3\!+\!128N^2\!+\!512N\!+\!256\bigg)\\
&=\frac2{4^3}N^2(5N+3)(N-4), 
\end{align*}
which is nonnegative for $N\geq 4$. Since $P_N$ is nonnegative in $\max\set{2,1+\frac N4}$ and strictly increasing on $(2,∞)$, we conclude that $P_N(m)> 0$ for any $m>\max\set{2,1+\frac N4}$. Positivity of $P_N(m)$ is equivalent to 
\[
 2(N+1)(m-1)^2(N+m+2)>N^3+m^2N+mN-2N
\]
and hence 
\[
 q=\frac{4(N+1)}N(m-1)>\frac{2N^2+2m^2+2m-4}{(m-1)(N+m+2)}. 
\]
Furthermore by the fact that $p>\frac N2$, we also have $q>N$, and can invoke Lemma \ref{lem:nawq.gives.up.for.pgeqn} \ref{nawq.gives.up.for.pgeqn;largem}.
%  enables us to find $ρ>n$ and $C>0$ for which 
% \[
%  \io u^{ρ}(t) \leq C \qquad \forall t\in(0,\Tmax)
% \]
% holds. 
\end{proof}

Having completed the necessary preparations, we can now turn to the proof of existence of a global solution. In order to lay the groundwork for compactness arguments in Section \ref{sec:weaksol}, at the same time we derive a batch of estimates for the solutions.

\begin{lemma}\label{lem:bounds}
Let $δ>0$, $m>1+\frac{N}4$. 
\beenum
\item\label{bounds:exist}  For any $(u_0,v_0)$ as in \eqref{initcond} and any $D\in \Cdm^+$ there is a global classical solution $(u,v)\in(\usualspace[∞])^2$ to \eqref{sys}. 
\item \label{bounds:bounds} Moreover, for every $T>0$, $K>0$ there is $C_T>0$ such that for every $D\in\Cdm$ and $(u_0,v_0)$ as in \eqref{initcond} with $\norm[\Lmme]{u_0}\leq K$, $\frac1K\leq \norm[\Liom]{v_0}$, $\norm[W^{1,∞}(\Om)]{v_0}\leq K$, every solution $(u,v)\in(\usualspace[T])^2$ to \eqref{sys} satisfies 
%lower bound for \norm[\Liom]{v_0} for the transformation of w_0\mapsto v_0 (was degree of freedom there, influences constants in Lemma \ref{lem:nav.gives.naw}.
\begin{align}
 \norm[L^{\infty}(\Om\times(0,T))]{u}&\leq C_T\label{bd:u}\\
 \norm[L^{\infty}((0,T);W^{1,\infty}(\Om))]{v}&\leq C_T\label{bd:v}\\
\norm[L^{\infty}(\Om\times(0,T))]{\frac1v\na v}&\leq C_T\label{bd:nav/v}\\
 \norm[L^2(\Om\times(0,T))]{D(u)\na u}&\leq C_T\label{bd:Dunau}\\
 \norm[L^2(\Om\times(0,T))]{\na u^{m-1}}&\leq C_T,\label{bd:naumme}\\
 \intnT\io D(u)u^{m-3}|\na u|^2&\leq C_T\label{bd:uDnau},\\
 \norm[L^{2}((0,T);(W_0^{1,1}(\Om))^\ast)]{v_t}&\leq C_T \label{bd:vt},\\
 \norm[L^1((0,T);(W_0^{1,N+1}(\Om))^\ast)]{u_t}&\leq C_T\bigg(1+\sup_{s\in[0,C_T]}D(s)\bigg)\label{bd:ut}.
\end{align}
\end{enumerate}
\end{lemma}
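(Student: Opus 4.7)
My plan is to combine the local existence result (Lemma \ref{lem:locex:and:continuation}) with the chain of estimates already assembled in this section to derive the global $L^\infty$-bound on $u$ that the extensibility criterion \eqref{eq:extcrit} requires, and then to read off the bounds of part \ref{bounds:bounds} by tracking constants along the way together with a few additional testing arguments. The subtle point, and the main obstacle, is that Lemma \ref{lem:intupbounded} only delivers $\io u^p\leq C$ for some $p>N$, which is below the threshold $q_1>N+2$ demanded by the Tao--Winkler lemma (Lemma \ref{lem:ex.estimates.parab}\ref{lem:exestimparab:taowin}); I bridge this gap by re-invoking Lemma \ref{lem:naw.q.gives.u.p} once an $L^\infty$-bound on $\na w$ is available.

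For \ref{bounds:exist}, assuming $\Tmax<\infty$, I fix any $T\in(0,\Tmax)$ and aim for an $L^\infty$-bound on $u$ independent of $T$. Lemma \ref{lem:intupbounded} first gives $\io u^p(\cdot,t)\leq C$ on $(0,T)$ for some $p>N$. Lemma \ref{lem:ex.estimates.parab}\ref{lem:exestimparab:nav} (with $q=\infty$, $r=p$) then yields $\na v\in L^\infty(\Om\times(0,T))$, while Lemma \ref{lem:wbd} gives $w\leq C'$ and hence $v\geq\|v_0\|_{\Liom}e^{-C'}>0$; together these imply $\na w\in L^\infty(\Om\times(0,T))$, and therefore $\intnT\io|\na w|^q<\infty$ for every $q$. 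Since the admissible range of $p$ in Lemma \ref{lem:naw.q.gives.u.p} is linear in $q$, a second application of that lemma with $q$ sufficiently large upgrades the integrability to $\io u^{q_1}\leq\widetilde C$ for some $q_1>N+2$. The chemotactic source $f:=-\tfrac uv\na v$ then lies in $L^\infty((0,T);\Lom{q_1})$, satisfies $f\cdot\nu=0$ on $\dOm$, and obeys $u_t=\na\cdot(D(u)\na u)+\na\cdot f$, so Lemma \ref{lem:ex.estimates.parab}\ref{lem:exestimparab:taowin} delivers the desired $L^\infty$-bound, contradicting \eqref{eq:extcrit} and hence forcing $\Tmax=\infty$.

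For \ref{bounds:bounds}, the same argument, executed quantitatively with constants depending only on $T$, $K$, $\delta$, $m$, already produces \eqref{bd:u}, \eqref{bd:v} (supplemented by the maximum principle bound $v\leq\|v_0\|_{\Liom}$ coming from \eqref{sys:v}) and \eqref{bd:nav/v}. The space-time integrals \eqref{bd:naumme} and \eqref{bd:uDnau} are direct restatements of Lemma \ref{lem:ionaumme}, using $\|w_0\|_{\Lom 1}\leq|\Om|\log(K\|v_0\|_{\Liom}^{-1})$ combined with the assumption $v_0\geq 1/K$. To obtain \eqref{bd:Dunau}, I test \eqref{sys:u} against $u$, apply Young's inequality with weight $D(u)\geq\delta u^{m-1}$ to absorb the chemotactic contribution using \eqref{bd:u} and \eqref{bd:nav/v}, and multiply the resulting $L^2$-bound on $\sqrt{D(u)}|\na u|$ by $\sqrt{D(u)}\leq\sqrt{\sup_{[0,C_T]}D}$. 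Finally, the dual-space estimates \eqref{bd:vt} and \eqref{bd:ut} result from testing the two PDEs against $\varphi\in W_0^{1,1}(\Om)$ and $\varphi\in W_0^{1,N+1}(\Om)$ respectively, with each contribution controlled by the pointwise and $L^2$-bounds already listed; the explicit $\sup_{[0,C_T]}D$-factor in \eqref{bd:ut} arises from estimating the diffusive term $D(u)\na u$ via Hölder's inequality.
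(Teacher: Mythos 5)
Your overall roadmap for part \ref{bounds:exist} matches the paper's: start from Lemma \ref{lem:intupbounded} to get $\sup_t\io u^p\leq C$ for some $p>N$, then use Lemma \ref{lem:ex.estimates.parab} \ref{lem:exestimparab:nav} for $\na v\in L^\infty$, Lemma \ref{lem:wbd} for $w\leq C$, Lemma \ref{lem:nav.gives.naw} for $\na w\in L^\infty$, re-invoke Lemma \ref{lem:naw.q.gives.u.p} to reach $p>N+2$, and close with Lemma \ref{lem:ex.estimates.parab} \ref{lem:exestimparab:taowin}. Your choice $f=-\frac uv\na v$ is literally equal to the paper's $u\na w$, and the observation that the admissible $p$ in Lemma \ref{lem:naw.q.gives.u.p} grows with $q$ so that a second application upgrades the exponent is exactly the move the paper makes. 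The derivations of \eqref{bd:v}, \eqref{bd:nav/v}, \eqref{bd:naumme}, \eqref{bd:uDnau}, \eqref{bd:vt} by reading off the sublemmata and testing \eqref{sys:v} are also as in the paper.

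There is, however, a genuine gap in your treatment of \eqref{bd:Dunau}. Testing \eqref{sys:u} (equivalently \eqref{uw:u}) against $u$ yields
\[
 \frac12\ddt\io u^2 + \io D(u)|\na u|^2 = \io u\na w\cdot\na u,
\]
and applying Young's inequality with weight $D(u)$ leaves the remainder $\frac12\io\frac{u^2}{D(u)}|\na w|^2$. Since the only lower bound available uniformly over $D\in\Cdm$ is $D(u)\geq\delta u^{m-1}$, this remainder is controlled by $\frac1{2\delta}\io u^{3-m}|\na w|^2$, which for $m>3$ is a \emph{negative} power of $u$ and is not bounded by \eqref{bd:u} and \eqref{bd:nav/v}; the $L^\infty$ bound on $u$ goes the wrong way, and for the approximating family $D_\varepsilon(s)=D(s+\varepsilon)$ the bound $D_\varepsilon\geq\delta\varepsilon^{m-1}$ gives a constant blowing up as $\varepsilon\downto0$. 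Since $m>1+\frac N4$ permits (indeed, for $N\geq8$ forces) $m>3$, this is not a fringe case. The paper sidesteps it by testing against $\Dbar(u)=\int_0^uD$ rather than $u$: the time derivative becomes $\ddt\io\Dbarbar(u)\geq -\io\Dbarbar(u_0)$ upon integration and is benign, and Young's inequality applied to $\io u\na w\cdot\na\Dbar(u)=\io D(u)u\na u\cdot\na w$ absorbs $\frac12\io D(u)^2|\na u|^2$ into the left side and leaves the harmless remainder $\frac12\io u^2|\na w|^2$, controlled directly by the $L^\infty$ bounds on $u$ and $\na w$ with no inverse powers of $D$ appearing. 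Finally, a smaller point: the dual-space estimate the paper actually proves (and subsequently uses in Section \ref{sec:weaksol}) is for $(u^{m-1})_t$ rather than $u_t$, and the factor $\sup_{s\in[0,C_T]}D(s)$ in \eqref{bd:ut} arises precisely from the term $\io u^{m-1}D(u)|\na\varphi|^2$ in that computation, not from a Hölder estimate on $D(u)\na u$ as you describe.
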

\begin{proof} According to Lemma \ref{lem:locex:and:continuation}, corresponding to $(u_0,v_0)$ and $D$ as in the hypothesis of the present lemma, there is a local solution $(u,v)\in(\usualspace)^2$. %Lemma \ref{lem:intupbounded} ensures that there is $p>N$ such that for every $T\in(0,\infty)$ we can find $C(T)>0$ satisfying $\io u^p(\cdot,t)\leq C(T)$ for every $t\in(0,T)\cap(0,\Tmax)$. The extensibility criterion \eqref{eq:extcrit} therefore ensures that $\Tmax=\infty$. \\
%\ref{bounds:bounds} 
We now let $T\in(0,\Tmax]\cap(0,\infty)$ and $K>0$. By $\ID$ let us abbreviate the set of initial data 
\begin{align*}
\ID:=\bigg\{&(u_0,v_0)\in C^{α}(\Ombar)\times W^{1,∞}(\Om) \text{ for some } α\in(0,1); %\\&\quad\qquad 
\norm[\Liom]{u_0}\leq K, \norm[W^{1,∞}(\Om)]{v_0}\leq K \bigg\}.
\end{align*}
Lemma \ref{lem:intupbounded} provides us with $p>N$ and $c_1>0$ such that for every $D\in\Cdm$ and every $(u_0,v_0)\in\ID$, every classical solution $(u,v)\in(\usualspace[T])^2$ of \eqref{sys} satisfies 
\[
 \norm[L^{\infty}((0,T);L^p(\Om))]{u}\leq c_1 
\]
and hence 
\[
 \norm[L^{\infty}(\Om\times(0,T))]{\na v}\leq c_2 \quad \text{and}\quad \norm[L^{\infty}(\Om\times(0,T))]{w}\leq c_3 
\]
as well as 
\[
 \norm[L^{\infty}(\Om\times(0,T))]{\na w}\leq c_4
\]
with some $c_2$, $c_3$ and $c_4$ obtained from Lemma \ref{lem:ex.estimates.parab} \ref{lem:exestimparab:nav}, Lemma \ref{lem:wbd} and Lemma \ref{lem:nav.gives.naw}, respectively, and with $w$ being defined as in \eqref{eq:def:w}. This asserts \eqref{bd:v} and \eqref{bd:nav/v}.  
An application of Lemma \ref{lem:naw.q.gives.u.p} for sufficiently large values of $q$ and $p$ then ascertains the existence of $c_5>0$ such that for all $D\in\Cdm$ and all $(u_0,v_0)\in\ID$ any classical solution $(u,v)$ of \eqref{sys} satisfies 
\[
 \norm[L^\infty((0,T);L^{N+3}(\Om))]{u\na w} \leq c_5, 
\]
again with $w$ as in \eqref{eq:def:w}. Additionally taking into account Lemma \ref{lem:massconservation:u}, we can apply Lemma \ref{lem:ex.estimates.parab} \ref{lem:exestimparab:taowin} with $f:=u\na w$ so as to obtain $c_6>0$ such that for all $D\in \Cdm$ and all $(u_0,v_0)\in\ID$ every classical solution $(u,v)$ of \eqref{sys} satisfies 
\[
 \norm[L^\infty(\Om\times(0,T))]{u}\leq c_6, 
\]
 which shows \eqref{bd:u} and -- in light of the extensibility criterion in \eqref{eq:extcrit} -- also proves \ref{bounds:exist}.%\red{, at least if $(u_0,v_0)\in \ID$ for some $K>0$, i.e. if $(u_0,v_0)$ satisfies \eqref{initcond}. If this is not the case, we pick some small $τ\in(0,\Tmax)$ and observe that $(u(\cdot,\tau),v(\cdot,τ))\in \ID$ for some $K>0$, and hence the boundedness assertion $\norm[L^\infty(\Om\times(τ,T))]{u}\leq c_6$ together with \eqref{eq:extcrit} yields \ref{bounds:exist} also for these initial data.}\\ 
Given $D\in \Cdm$ we let $\Dbar(s):=\int_0^s D(σ) dσ$ and $\Dbarbar(s):=\int_0^s \Dbar(σ)dσ$ for $s>0$. Then for every $D\in \Cdm$ and $(u_0,v_0)\in\ID$, any classical solution $(u,v)$ of \eqref{sys} obeys $u_t=\Delta \Dbar(u)+\na\cdot(u\na w)$ with $w$ as in \eqref{eq:def:w}, and testing this equation by $\Dbar(u)$ %useful in the context of PME, see eg. \cite[p.98]{vazquez}
we obtain 
\[
 \intnT\io (\Dbarbar(u))_t = -\intnT\io |\na \Dbar(u)|^2-\intnT\io u\na w\cdot \na\Dbar(u), 
\]
which, by Young's inequality, turns into 
\[
 \io \Dbarbar(u(\cdot,T)) + \frac12 \intnT\io |D(u)\na u|^2\leq \io \Dbarbar(u_0) +\frac12 \intnT\io u^2|\na w|^2\leq |\Om|\Dbarbar(K) + \frac12|\Om|Tc_6^2c_4^2,
\]
due to nonnegativity of $D$ proving \eqref{bd:Dunau}. 
The existence of $c_7>0$, $c_8>0$ such that for any $D\in \Cdm$ and any $(u_0,v_0)\in \ID$ any solution of \eqref{sys} satisfies 
\begin{align*}
 \norm[L^2(\Om\times(0,T))]{\na u^{m-1}}&\leq c_7,&\qquad   \intnT\io D(u)u^{m-3}|\na u|^2&\leq c_8
\end{align*}
immediately results from Lemma \ref{lem:ionaumme}, so that \eqref{bd:naumme} and \eqref{bd:uDnau} have been shown. For every $φ\in C_0^\infty(\Om)$ we have that any solution $(u,v)$ of \eqref{sys} for any $D\in \Cdm$, $(u_0,v_0)\in\ID$ satisfies 
\[
 \betr{\io v_tφ}=\betr{-\io \naφ\cdot \na v-\io uvφ}\leq c_2\norm[\Lom 1]{\na φ}+ Kc_6\norm[\Lom1]{φ}
\]
and we can conclude \eqref{bd:vt}.
%moreover, v_t(\cdot,t)\in C^0(\Om) for every $t$. C^0(\Om) is a separable subset of (W^{1,1})^\ast (polynomials with rational coefficients dense wrt. sup-norm, hence, a fortiori, also wrt. (W^{1,1})^\ast-norm. Therefore, v_t\colon [0,T]\to (W^{1,1})^\ast Bochner-measurable by Pettis' theorem. Measurability of $t\mapsto \io v_tφ$ is obvious. In total, we even have a bound on $v_t$ in $L^{\infty}((0,T);W_0^{1,1}(\Om))$. But $L^r((0,T);...)$ is sufficient for everything that follows. 
We let $c_9>0$ be such that $\norm[\Liom]{ϕ}\leq c_9$ for every $ϕ\in W_0^{1,N+1}(\Om)$ with $\norm[W_0^{1,N+1}(\Om)]{ϕ}\leq 1$ and $c_{10}>0$, $c_{11}>0$ such that $\norm[L^2(\Om\times(0,T))]{ϕ}\leq c_{10}$, $\norm[L^1(\Om\times(0,T))]{ϕ}\leq c_{11}$ for every $ϕ\in L^\infty((0,T);L^{N+1}(\Om))$ with $\norm[L^\infty((0,T);W_0^{1,N+1}(\Om))]{ϕ}\leq 1$. We denote $X:=L^1((0,T);(W_0^{1,N+1}(\Om))^\ast)$ and thus have $X^\ast=L^\infty((0,T);W_0^{1,N+1}(\Om))$. Taking $φ\in X^\ast$ with $\norm[X^\ast]{φ}\leq 1$, for any solution $(u,v)$ of \eqref{sys} for $D\in\Cdm$ and $(u_0,v_0)\in\ID$ we have 
\begin{align*}
 \frac1{m-1}&\betr{\intnT\io (u^{m-1})_tφ}=\betr{\intnT\io u^{m-2}u_tφ}\\
 &\leq \betr{\intnT\io u^{m-2}φ\na\cdot(D(u)\na u)}+\betr{\intnT\io u^{m-2}φ\na\cdot\kl{\frac{u}v\na v}}\\
 &\leq |m-2|\betr{\intnT\io u^{m-3}φD(u)|\na u|^2} +\betr{\intnT\io u^{m-2}D(u)\na u\cdot\na φ}\\
 &\quad +|m-2|\betr{\intnT\io \frac{u^{m-2}φ}v\na v\cdot\na u}+\betr{\intnT\io \frac{u^{m-1}}v\na v\cdot\na φ}=:I_1+I_2+I_3+I_4, 
\end{align*}
where we can estimate \(I_1\leq|m-2| c_8c_9\), 
\begin{align*}
I_2\leq  \frac12\intnT\io u^{m-3}D(u)|\na u|^2+\frac12\intnT\io u^{m-1}D(u)|\na φ|^2 \leq \frac{c_8}2+\frac12c_6^{m-1}c_{10}^2 \sup_{s\in[0,c_6]}D(s),
\end{align*}
moreover 
\begin{align*}
I_3\leq c_9c_4|m-2|\norm[L^1(\Om\times(0,T))]{u^{m-2}\na u} 
\leq c_9c_4|m-2|\sqrt{|\Om|T}\norm[L^2(\Om\times(0,T))]{u^{m-2}\na u}\\
= \frac{c_9c_4|m-2|\sqrt{|\Om|T}}{m-1}\norm[L^2(\Om\times(0,T))]{\na u^{m-1}} 
\leq \frac{c_9c_4c_7|m-2|\sqrt{|\Om|T}}{m-1}
\end{align*}
and \(I_4\leq c_6^{m-1}c_4c_{11}\), so that finally 
\[
 \norm[L^1((0,T);(W_0^{1,N+1}(\Om))^\ast)]{(u^{m-1})_t}\leq c_{12}+c_{13}\sup_{s\in[0,c_6]}D(s),
\]
%since the dual of (W_0^{1,N+1}(\Om))^\ast, which is $W_0^{1,N+1}(\Om)$ thanks to reflexivity, is separable, so is (W_0^{1,N+1}(\Om))^\ast; and by Pettis' measurability theorem, measurability of $(u^{m-1})_t$ is guaranteed.
where $c_{12}:= c_8c_9|m-2|+\frac{c_8}2+\frac{c_9c_4c_7|m-2|\sqrt{|\Om|T}}{m-1}+c_6^{m-1}c_4c_{11}$ and $c_{13}:=\frac12c_6^{m-1}c_{10}^2$, holds for any solution $(u,v)$ of \eqref{sys} for any $(u_0,v_0)\in\ID$ and any $D\in \Cdm$.
\end{proof}

\begin{proof}[Proof of Theorem \ref{thm:nondegenerate}]
 Lemma \ref{lem:bounds} \ref{bounds:exist} together with \eqref{bd:u} contains Theorem \ref{thm:nondegenerate}.
\end{proof}

\section{Weak solutions in the degenerate case. Proof of Theorem \ref{thm:degenerate}}\label{sec:weaksol}

If the diffusion becomes degenerate at points where $u=0$, we can no longer hope for classical solutions. Therefore we introduce the following definition of weak solutions that are -- in line with our goal of finding solutions that do not blow up in finite time -- locally bounded.

\begin{defn}\label{def:weaksol} 
 Let $δ>0$, $m\geq 1$ and $D\in \Cdm$ and define $\Dbar(s):=\int_0^s D(σ)dσ$ for $s\in [0,∞)$. Moreover, let $(u_0,v_0)$ be as in \eqref{initcond}. By a \emph{locally bounded global weak solution} to \eqref{sys} we mean a pair of functions $(u,v)\colon\Om\times[0,∞)\toℝ^2$ such that 
\begin{align*}
% u\in L^1_{loc}(\Om\times[0,∞))\\
 u&\in L^\infty_{loc}([0,∞);L^\infty(\Om))\\
 \Dbar(u)&\in L^2_{loc}([0,∞);W^{1,2}(\Om))\\
%D(u)\na u need not exist. u\in L^\infty means D(u)\in L^\infty and \na u in a distributional sense. But what is the product of L^\infty and distribution? 
%Therefore rewrite.
 v&\in L^\infty_{loc}([0,∞);W^{1,∞}(\Om))
\end{align*}
and for every $φ\in C_0^\infty(\Ombar\times[0,∞))$ we have 
\begin{equation}\label{weaku}
 -\intninf\io uφ_t - \io u_0φ(\cdot,0) = -\intninf\io \na \Dbar(u)\cdot \naφ + \intninf\io \frac{u}{v}\na v \cdot \na φ
\end{equation}
and 
\begin{equation}\label{weakv}
 -\intninf\io vφ - \io v_0φ(\cdot,0) = -\intninf\io \na v\cdot \na φ - \intninf \io uv φ.
\end{equation}
% We call the solution \emph{locally bounded}, if additionally 
% \[
%  u\in L^\infty_{loc}([0,∞);L^\infty(\Om)).
% \]
\end{defn}
%Given $D\in \Cdm$ and $ε>0$ let $D_{ε}(s):=D(s+ε)$. Then $D_{ε}\in \Cdm^+$.

% \begin{lemma}\label{lem:weaksol} 
%  Let $δ>0$ and $m>1+\frac n4$.
% \end{lemma}
% \begin{proof}

Having prepared a lot of bounds on solutions to \eqref{sys} for $D\in \Cdm^+$ that are uniform in $D\in \Cdm^+$ (Lemma \ref{lem:bounds}), we approximate $D\in \Cdm$ and find a limit of the corresponding solutions.    

\begin{proof}[Proof of Theorem \ref{thm:degenerate}]
 Let $D\in \Cdm$. For any $ε>0$ we define $D_{ε}(s):=D(s+ε)$, $s\in[0,∞)$, and note that, for any $ε>0$, $D_{ε}\in\Cdm^+$. We choose $(\une,\vne)\in (C^1(\Ombar))^2$ such that $\une\to u_0$ and $\vne\to v_0$ in $L^1(\Om)$ as $ε\downto0$ and that there is $K>0$ such that for all $ε\in(0,1)$ we have $\norm[\Lmme]{\une}+\norm[W^{1,\infty}(\Om)]{\vne}\leq K$ and $\norm[\Liom]{\vne}>\frac1K$, and let $(u_{ε},v_{ε})\in(\usualspace[∞])^2$ denote a solution to 
%\begin{align*}
\begin{equation}\label{syseps}
\begin{cases} 
 \quad u_{εt}=\na\cdot(D_{ε}\na u_{ε}) - \na \cdot \kl{\frac{u_{ε}}{v_{ε}}\na v_{ε}} & \text{in } \Om\times(0,∞),\\
 \quad v_{εt}=\Delta v_{ε} - u_{ε}v_{ε}& \text{in } \Om\times(0,∞),\\
 \;\,u_{ε}(\cdot,0)=\une, \quad v_{ε}(\cdot,0)=\vne& \text{in } \Om,\\
 \delny u_{ε}\amrand = 0 = \delny v_{ε}\amrand&\text{in } (0,\infty),
\end{cases}
\end{equation}
%\end{align*}
which exists due to \ref{lem:bounds} \ref{bounds:exist}. \\
Let us define $\Dbar_{ε}(s):=\int_0^s D_{ε}(σ)dσ$, $s\in[0,∞)$. We claim that for every $n\inℕ$ there is a sequence $(ε_{n,k})_{k\inℕ}$ such that $ε_{n,k}\to 0$ as $k→∞$ for any $n\inℕ$, that for $n>1$ the sequence $(ε_{n,k})_{k\inℕ}$ is a subsequence of $(ε_{n-1,k})_{k\inℕ}$ and that for any $n\inℕ$ 
\begin{equation}\label{eq:convergence}
\begin{cases}
 u_{ε_{n,k}} &\text{ converges a.e. in } \Om\times(0,n) \text{ and in } L^1(\Om\times(0,n))\\
 \Dbar_{ε_{n,k}}(u_{ε_{n,k}}) &\text{ converges weakly in } L^2((0,n);W_0^{1,2}(\Om))\\
 v_{ε_{n,k}} &\text{ converges uniformly in } \Om\times(0,n)\\
 \na v_{ε_{n,k}} &\text{ converges weakly$^\ast$ in } L^{∞}(\Om\times(0,n))\\
 \frac1{v_{ε_{n,k}}}\na v_{ε_{n,k}} &\text{ converges weakly$^\ast$ in } L^{∞}(\Om\times(0,n))
\end{cases}
\end{equation}
as $k→∞$. For $n=0$ we choose an arbitrary monotone sequence $(ε_{0,k})_{k\inℕ}\subset(0,1)$ which converges to $0$. Let $n\inℕ$ and let us assume that some sequence $(ε_{n-1,k})_{k\inℕ}$ with properties as in \eqref{eq:convergence} is given. Then by Lemma \ref{lem:bounds} \ref{bounds:bounds}, more precisely, by \eqref{bd:u}, there is $c_1(n)>0$ such that 
\begin{equation}\label{bd:def:c1}
 \norm[L^{∞}(\Om\times(0,n))]{u_{ε_{n-1,k}}}\leq c_1(n) \qquad \text{for all } k\in ℕ.
\end{equation}
We abbreviate 
\[
 d_n:= \sup_{ε\in(0,1)} \sup_{0\leq s\leq c_1(n)} D_{ε}(s)\leq \sup_{0\leq s\leq c_1(n)+1} D(s).
\]
Then 
\[
 \Dbar_{ε_{n-1,k}}(u_{ε_{n-1,k}}(x,t))\leq \int_0^{c_1(n)} d_n = c_1(n)d_n \quad \text{ for all } (x,t)\in\Om\times(0,n)
\]
and combining this with \eqref{bd:Dunau}, we find $c_2(n)>0$ such that for all $k\inℕ$ 
\[
 \norm[L^2((0,n);W^{1,2}(\Om))]{\Dbar_{ε_{n-1,k}}(u_{ε_{n-1,k}})}\leq c_2(n). 
\]
Hence there is a subsequence $(ε_{n,k}^{(1)})_{k\inℕ}$ of $(ε_{n-1,k})_{k\inℕ}$ such that $(\Dbar_{ε_{n,k}^{(1)}}(u_{ε_{n,k}^{(1)}}))_{k\inℕ}$ is weakly convergent in $L^2((0,n);W^{1,2}(\Om))$. Moreover, \eqref{bd:naumme}, \eqref{bd:u} and \eqref{bd:ut} show that there is $c_3(n)$ such that for all $k\in ℕ$
\[
 \norm[L^2((0,n);W^{1,2}(\Om))]{u_{ε_{n,k}^{(1)}}^{m-1}}\leq c_3(n), \qquad \norm[L^1((0,n);(W_0^{1,N+1})^\ast)]{\kl{u_{ε_{n,k}^{(1)}}^{m-1}}_t}\leq c_3(n).
\]
Since $W^{1,2}(\Om)\cptembeddedinto \Lom2 \embeddedinto (W_0^{1,N+1}(\Om))^\ast$, we can invoke a version of the Aubin-Lions lemma (\cite[Cor. 8.4]{simon}) to find a subsequence $({ε_{n,k}^{(2)}})_{k\inℕ}$ of $({ε_{n,k}^{(1)}})_{k\inℕ}$ such that $(u_{ε_{n,k}^{(2)}}^{m-1})_{k\inℕ}$ is convergent in $L^2((0,n);L^2(\Om))$, and a further subsequence $({ε_{n,k}^{(3)}})_{k\inℕ}$ of $({ε_{n,k}^{(2)}})_{k\inℕ}$ such that $(u_{ε_{n,k}^{(3)}}^{m-1})_{k\inℕ}$ and thus, by continuity of $[0,∞)\ni x\mapsto x^{\frac1{m-1}}$, also $(u_{ε_{n,k}^{(3)}})_{k\inℕ}$ converge a.e. in $\Om\times(0,n)$ as well as with respect to the norm of $L^1(\Om\times(0,n))$ due to Lebesgue's dominated convergence theorem and the fact that the constant $c_1(n)$ is integrable over $\Om\times(0,n)$.\\
Moreover, \eqref{bd:v} and \eqref{bd:vt} ensure the existence of $c_4(n)>0$ such that 
\[
 \norm[L^{∞}((0,n);W^{1,∞}(\Om))]{v_{ε_{n,k}^{(3)}}}\leq c_4(n), \qquad \norm[L^{2}((0,n);(W_0^{1,1}(\Om))^{*})]{\kl{v_{ε_{n,k}^{(3)}}}_t}\leq c_4(n)\qquad \text{for all } k\inℕ
\]
and again due to $W^{1,∞}(\Om)\cptembeddedinto C^0(\Ombar)\embeddedinto (W_0^{1,1}(\Om))^\ast$ and \cite[Cor. 8.4]{simon} we find a subsequence $(ε_{n,k}^{(4)})_{k\inℕ}$ of $(ε_{n,k}^{(3)})_{k\inℕ}$ such that $(v_{ε_{n,k}^{(4)}})_{k\inℕ}$ converges uniformly in $\Om\times(0,n)$. Additionally, \eqref{bd:v} produces another subsequence $(ε_{n,k}^{(5)})_{k\inℕ}$ of $(ε_{n,k}^{(4)})_{k\inℕ}$ such that $(\na v_{ε_{n,k}^{(5)}})_{k\inℕ}$ converges weakly$^\ast$ in $L^\infty(\Om\times(0,n))$. Finally, owing to the bound in \eqref{bd:nav/v}, we can extract a further subsequence $(ε_{n,k})_{k\inℕ}$ of $(ε_{n,k}^{(5)})_{k\inℕ}$ such that also $\kl{\frac1{v_{ε_{n,k}}}\na v_{ε_{n,k}}}_{k\inℕ}$ is weakly$^\ast$ convergent in $L^\infty(\Om\times(0,n))$.\\
We then use the diagonal sequence $(\epstilde_k)_{k\inℕ}:=(ε_{k,k})_{k\inℕ}$ to find functions $u,v,z\colon\Om\times[0,∞)\toℝ$ and $ζ,ξ\colon\Om\times[0,∞)\toℝ^N$ such that 
\begin{align}
 u_{\epstilde_k} \to u \qquad&\text{ in } L^1_{loc}([0,∞),\Lom1) \text{ and a.e. in } \Om\times(0,∞), \label{con:u}\\
 v_{\epstilde_k}\to v \qquad&\text{in } L^{\infty}_{loc}([0,∞);C^0(\Ombar)),\label{con:v}\\% this is, of course, the same as ``in C^0(\Ombar\times[0,\infty))''. rather write this?
 \Dbar_{\epstilde_k}(u_{\epstilde_k})\wto z \qquad&\text{in }L^2_{loc}([0,∞);W^{1,2}(\Om)),\label{con:Dbar}\\
 \na v_{\epstilde_k}\weakstarto ζ \qquad&\text{in } L^\infty_{loc}([0,∞),L^\infty(\Om)) \text{ and } \label{con:nav}\\
 \frac1{v_{\epstilde_k}}\na v_{\epstilde_k}\weakstarto ξ\qquad&\text{in } L^\infty_{loc}([0,∞),L^\infty(\Om))\label{con:nav/v}
\end{align}
as $k→∞$. Since $u_{\epstilde_k}+\epstilde_k\to u$ a.e. and $\Dbar$ is continuous, also $\Dbar_{\epstilde_k}(u_{\epstilde_k})=\Dbar(u_{\epstilde_k}+\epstilde_k)-\Dbar(\epstilde_k)\to \Dbar(u)-\Dbar(0)=\Dbar(u)$ a.e., %\Dbar(0)=0, even if $D(0)>0$. 
and hence $z=\Dbar(u)$. Also, \eqref{con:v} and \eqref{con:nav} imply $ζ=\na v$ and the combination of \eqref{con:v} and \eqref{con:nav/v} shows that $ξ=\frac1v\na v$. \\
%φ\in C^\infty_0. Then vφ convergent in L^\infty
% \[
%  \intninf \io ξφv\ot \intninf\io \frac 1{v_{ε}}\na v_{ε} φv_{ε}=\intninf\io \na v_{ε}φ = -\intninf \io v_{ε}\na φ \to \io \na vφ
% \]
We let $φ\in C_0^{\infty}(\Om\times[0,∞))$. Then \eqref{syseps} entails that 
\[
 -\intninf\io u_{\epstilde_k}φ_t - \io u_{0\epstilde_k} φ(\cdot,0) = -\intninf\io \na \Dbar_{\epstilde_k}(u_{\epstilde_k})\cdot \naφ + \intninf\io \frac{u_{\epstilde_k}}{v_{\epstilde_k}}\na v_{\epstilde_k} \cdot \na φ
\]
and 
\[
 -\intninf\io v_{\epstilde_k}φ - \io v_{0\epstilde_k} φ(\cdot,0) = -\intninf\io \na v_{\epstilde_k} \cdot \na φ - \intninf \io u_{\epstilde_k}v_{\epstilde_k} φ,
\]
so that passing to the limit as $k→∞$ in each of these integrals shows that $(u,v)$ satisfies \eqref{weaku} and \eqref{weakv}. 
That $u\in L^\infty_{loc}([0,∞);L^\infty(\Om))$ %is locally bounded 
is also entailed by \eqref{con:u} and \eqref{bd:def:c1}. Hence $(u,v)$ is a locally bounded global weak solution to \eqref{sys} in the sense of Definition \ref{def:weaksol}. 
\end{proof}

\section{Acknowledgement}
The author acknowledges support of the {\em Deutsche Forschungsgemeinschaft} within the project {\em Analysis of chemotactic cross-diffusion in complex frameworks}. 

{\footnotesize
\def\cprime{$'$}

% \bibliographystyle{abbrv}
% \bibliography{singcon.bib}
}
\end{document}